        \theoremstyle{plain}
        \newtheorem{theorem}{Theorem}[section]
        \newtheorem{corollary}[theorem]{Corollary}
        \newtheorem{lemma}[theorem]{Lemma}
        \newtheorem{proposition}[theorem]{Proposition}
        \newtheorem{maintheorem}{Theorem}
        \theoremstyle{definition}
        \newtheorem{definition}[theorem]{Definition}
        \newtheorem{example}[theorem]{Example}
        \newtheorem{notation}[theorem]{Notation}
        \newtheorem*{example*}{Example}
        \theoremstyle{remark}
        \newtheorem{remark}[theorem]{Remark}
        \newtheorem*{remark*}{Remark}  
        \newcommand{\suchthat}{\,:\,}
        \newcommand{\itemref}[1]{\eqref{#1}}
        \newcommand{\mathscript}{\mathcal}
        \newcommand{\Z}{\mathbb{Z}}
        \newcommand{\N}{\mathbb{N}}
        \newcommand{\Orb}{\mathcal{O}}   
       \DeclareMathOperator{\spec}{Spec} 
           \newcommand{\Et}{\mathrm{\acute{E}t}}
           \newcommand{\lfp}{\mathrm{lfp}}
           \newcommand{\Aff}{\mathbb{A}}
        \newcommand{\MOD}{\mathsf{Mod}}    
        \DeclareMathOperator{\coker}{coker}
         \DeclareMathOperator{\Hom}{Hom}
        \DeclareMathOperator{\Tor}{Tor}
        \newcommand{\COHO}[1]{\mathcal{H}^{{#1}}}
        \newcommand{\trunc}[1]{\tau^{{#1}}}
        \newcommand{\RDERF}{\mathsf{R}}
        \newcommand{\LDERF}{\mathsf{L}}
        \newcommand{\DCAT}{\mathsf{D}}
        \newcommand{\QCOH}{\mathsf{QCoh}}
        \renewcommand{\bar}[1]{\overline{{#1}}}
        \newcommand{\tensor}{\otimes}
        \newcommand{\homotopic}{\simeq}
        \newcommand{\opp}{\circ}
\renewcommand{\subset}{\subseteq}
\numberwithin{equation}{section}
\newcommand{\STACKSABS}{\textbf{Stacks}}
\newcommand{\STACKS}[1]{\STACKSABS/{#1}}
\newcommand{\qcsubscript}{\mathrm{qc}} 
\newcommand{\DQCOH}[1][]{\DCAT_{\qcsubscript{#1}}} 
\newcommand{\QCPBK}[1]{#1^*_{\qcsubscript}}
\newcommand{\spref}[1]{\href{http://stacks.math.columbia.edu/tag/#1}{#1}}
\newcommand{\labitem}[2]{%
\def\@itemlabel{(\textbf{#1})}
\item
\def\@currentlabel{\textbf{#1}}\label{#2}}
\setlist[enumerate]{font=\normalfont}
\title{Mayer--Vietoris squares in algebraic geometry}
\date{Apr 3, 2023}
\author[J. Hall]{Jack Hall}
\address{School of Mathematics \& Statistics, The University of Melbourne, Parkville, VIC, 3010, Australia}
\email{jack.hall@unimelb.edu.au}
\author[D. Rydh]{David Rydh}
\address{KTH Royal Institute of Technology\\Department of Mathematics\\SE\nobreakdash-100\ 44\ Stockholm\\Sweden}
\email{dary@math.kth.se}
\thanks{The first author was supported by the Australian Research Council DE150101799 while some of this work was completed.}
\thanks{The second author was supported by the Swedish Research Council
2011-5599 and 2015-05554.}
\subjclass[2020]{Primary 14A20; secondary 13F40, 13J10, 14F08, 14F20}
\keywords{Mayer-Vietoris squares, descent, derived categories, algebraic stacks, \'etale morphisms}
\renewcommand{\STACKSABS}{\mathsf{Stacks}}
\newcommand{\CAT}{\mathsf{Cat}}
\renewcommand{\Et}{\mathsf{Et}} 
\newcommand{\AlgSp}{\mathsf{AlgSp}} 
\newcommand{\qs}{\mathrm{qs}} 
\newcommand{\Bl}{\mathrm{Bl}} 
\newcommand{\AFF}{\mathsf{Aff}} 
\newcommand{\QAFF}{\mathsf{Qaff}}
\newcommand{\inj}{\hookrightarrow}
\newcommand{\clopen}{\mathsf{OC}} 
\newcommand{\Cl}{\mathsf{Cl}} 
\newcommand{\Op}{\mathsf{Op}} 
\newcommand{\Int}{\mathsf{Int}} 
\newcommand{\equalizer}[2]{\xymatrix@1@M=0mm@C=7mm{#1%
 \ar@<.5ex>@{+->+}[r] \ar@<-.5ex>@{+->+}[r] & #2}}
\newcommand{\tfF}{\mathsf{F}} 
\begin{document}
\begin{abstract}
  We consider various notions of Mayer--Vietoris squares in algebraic geometry. We use these to generalize a number of gluing and pushout results of Moret-Bailly, Ferrand--Raynaud, Joyet and Bhatt. An important intermediate step is Gabber's rigidity theorem for henselian pairs, which our methods give a new proof of.

\end{abstract}
\maketitle

\section{Introduction}
Let $X$ be a topological space and let $U$, $V \subseteq X$ be open subsets such 
that $X=U\cup V$. Frequently, natural invariants of $X$ can be determined by the 
restriction of these invariants to $U$, $V$ and $U \cap V$. The prototypical 
examples are the Mayer--Vietoris exact sequences in algebraic topology. These 
results have proved to be very useful for inductive arguments.

Now let $X$ be a variety, scheme, algebraic space, or algebraic stack. It is 
straightforward to adapt the topological results (e.g., Mayer--Vietoris with open coverings) to this situation. In algebraic geometry, however, open coverings are often too restrictive to use in inductive arguments. A consideration of the existing literature motivated us to make the following definition.
\begin{definition}
  Consider a cartesian diagram of algebraic stacks
  \begin{equation}\label{E:MV-square}
    \vcenter{\xymatrix{
        U'\ar[r]^{j'}\ar[d]_{f_U} & X'\ar[d]^f \\
        U\ar[r]^j & X,\ar@{}[ul]|\square
      }}
  \end{equation}
  where $j$ is an open immersion. It is a \emph{weak Mayer--Vietoris square}
  if for every morphism of algebraic stacks $W \to X$ with image disjoint
  from $U$, the induced morphism $f_W \colon W'=X'\times_X W \to W$ is an
  isomorphism. It will be convenient 
  to let $i\colon Z \inj X$ denote a closed immersion with complement $U$.
  Then $Z':=f^{-1}(Z)\to Z$ is an isomorphism.
\end{definition}
The condition of being a weak Mayer--Vietoris square is trying to capture that $X'$ contains 
all infinitesimal neighborhoods of $Z$ in $X$. In particular, if $X$ and $X'$ are locally 
noetherian, then being a weak Mayer--Vietoris square is equivalent to $f_Z$ being an isomorphism and $f$ being flat at all 
points over $Z$ (Lemma \ref{L:MV-different-notions}). 

Fix a weak Mayer--Vietoris square as in \eqref{E:MV-square}.  
If $f$ is \'etale, then it is also known as an \emph{\'etale
neighborhood}, or upper distinguished square, or Nisnevich square. These were treated in depth in~\cite{MR2774654}. Some highlights of the theory are that \'etale neighborhoods are pushouts in the $2$-category of algebraic stacks and that quasi-coherent sheaves (and many more things) can be glued along \'etale neighborhoods. \'Etale neighborhoods feature 
prominently in the interactions between algebraic geometry and topology.

There is another class of weak Mayer--Vietoris squares, which generalize \'etale neighborhoods, that have been 
considered and applied to great effect in the past \cite{MR0272779,MR1374653}. These are our main object of interest.
\begin{definition}
  A \emph{flat Mayer--Vietoris square} is a weak Mayer--Vietoris square as in \eqref{E:MV-square} such that $f$ is flat.
\end{definition}
Our first main result is the following.
\begin{maintheorem}\label{MT:pushout_fmv}
  Fix a flat Mayer--Vietoris square as in \eqref{E:MV-square}. If $X$ is
  locally the spectrum of a $G$-ring (e.g., locally excellent), then the square
  \eqref{E:MV-square} is a pushout in the $2$-category of algebraic stacks.
\end{maintheorem}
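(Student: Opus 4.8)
The plan is to verify the universal property of the pushout directly: for every algebraic stack $T$ we must show that the natural functor
\[
\Hom(X,T)\longrightarrow \Hom(X',T)\times_{\Hom(U',T)}\Hom(U,T)
\]
(a $2$-fibre product of groupoids) is an equivalence. Two structural facts will be used throughout. First, flat Mayer--Vietoris squares are stable under arbitrary base change $Y\to X$: flatness, the open immersion, and the weak Mayer--Vietoris condition all pass to $U_Y:=U\times_X Y\hookrightarrow Y$ — the last because a morphism to $Y$ whose image is disjoint from $U_Y$ has, a fortiori, image disjoint from $U$, while $W\times_Y(X'\times_X Y)=W\times_X X'$. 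Second, $p:=j\sqcup f\colon U\sqcup X'\to X$ is faithfully flat: it is flat since $j$ and $f$ are, and surjective since $\im j=U$ and $\im f\supseteq Z$, the latter obtained by applying the weak Mayer--Vietoris condition to $\spec\kappa(z)\to X$ for $z\in Z$. Since $X$ is locally excellent, hence locally noetherian, Lemma \ref{L:MV-different-notions} tells us moreover that the square simply says that $f_Z$ is an isomorphism and $f$ is flat at all points over $Z$; equivalently, $f$ identifies the formal completions of $X'$ and of $X$ along $Z$.

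As morphisms to an algebraic stack satisfy faithfully flat descent, $\Hom(X,T)$ is the groupoid of descent data along $p$. Since $j$ is a monomorphism we have $U\times_X U=U$ and $U\times_X X'=U'$, so such a datum unwinds to: a morphism $g'\colon X'\to T$, a morphism $g_U\colon U\to T$, a $2$-isomorphism $\phi$ over $U'$ between their restrictions, and a $2$-isomorphism $\psi$ over $R:=X'\times_X X'$ between the two pullbacks of $g'$, subject to a cocycle condition over the triple fibre products tying $\psi$ to $\phi$. The sought equivalence therefore amounts to the claim that every $(g',g_U,\phi)$ extends uniquely to such a $\psi$. But restricting $R$ over the open $U$, where it becomes $U'\times_U U'$, forces $\psi$ to be the $\phi$-conjugate of the canonical descent isomorphism of $g_U$, while restricting over the closed $Z$, where $R$ becomes $X'_Z$ with its two projections equal (as $f_Z$ is an isomorphism), forces $\psi$ to be the identity. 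Producing and comparing such $\psi$ is once more the universal property of a flat Mayer--Vietoris square — the base change of \eqref{E:MV-square} along $f$ — now tested against $\Isomstk_T(\mathrm{pr}_1^*g',\mathrm{pr}_2^*g')$ over $R$, which is quasi-affine (resp.\ affine) over $R$ whenever $T$ has quasi-affine (resp.\ affine) diagonal. In the same way, full faithfulness of the functor above reduces, via base-change stability, to producing sections of the relatively quasi-affine (resp.\ affine) stacks $\Isomstk_T(g,h)$ over $X$.

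This sets up a d\'evissage on the test stack $T$. Using smooth and Zariski descent on $T$, together with the reduction of its relative diagonal just described, the universal property for an arbitrary $T$ is reduced to the case in which $T$ is an affine scheme (and to the analogous relative statements for the base changes of \eqref{E:MV-square}). For $T$ affine the groupoid $\Hom(X,T)$ is discrete, and the universal property becomes the ring-theoretic identity
\[
\Gamma(X,\mathcal{O}_X)\xrightarrow{\ \sim\ }\Gamma(X',\mathcal{O}_{X'})\times_{\Gamma(U',\mathcal{O}_{U'})}\Gamma(U,\mathcal{O}_U)
\]
together with its relative variants; all of these are the restriction to algebra objects of the gluing equivalence $\QCOH(X)\xrightarrow{\sim}\QCOH(X')\times_{\QCOH(U')}\QCOH(U)$ along a flat Mayer--Vietoris square, which applies since such squares and their base changes are themselves flat Mayer--Vietoris squares.

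It remains to establish this gluing of quasi-coherent sheaves; this is the technical heart, and the source of the excellence hypothesis. The plan is: (i) the statement is smooth-local, then Zariski-local, on $X$, so reduce to $X=\spec A$; (ii) by noetherian approximation write $A$ as a filtered colimit of finite-type $\Z$-algebras over one of which the whole square is defined, reducing to $A$ noetherian and excellent with $Z=V(I)$ and $X'=\spec B$ for a flat map $A\to B$ with $A/I\cong B/IB$; (iii) compare with the $I$-adic completion — the square $\spec\widehat{A}\to\spec A$, with the same open, is again a flat Mayer--Vietoris square and shares the formal completion of the given one along $Z$, so a diagram chase reduces the problem to the formal-completion case, which is Bhatt's formal gluing theorem; the comparison rests on N\'eron--Popescu desingularization, which realizes $\widehat{A}$ as a filtered colimit of smooth $A$-algebras and is precisely where excellence of $A$ is used; (iv) the remaining input, controlling quasi-coherent sheaves in a henselian neighbourhood of $Z$, is Gabber's rigidity theorem for henselian pairs, for which our methods give a new proof. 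The main obstacle is exactly the combination (iii)--(iv): orchestrating completion, N\'eron--Popescu desingularization and Gabber rigidity over an excellent base. Everything preceding it — the descent along $p$, the d\'evissage on $T$, and the stability of flat Mayer--Vietoris squares under base change — is essentially formal.
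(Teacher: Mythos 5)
Your overall architecture---reduce to a statement about sections of the (quasi-)affine diagonal plus a ring-theoretic gluing identity---is the right one for a test stack $T$ with quasi-affine diagonal, and indeed recovers Moret-Bailly's result and Theorem~\ref{MT:glue_fmv}\itemref{MTI:glue_fmv:hom-ff}. But for an arbitrary algebraic stack $T$ there are two genuine gaps, and they sit exactly at the step you declare ``essentially formal.'' First, the covering $p=j\sqcup f\colon U\sqcup X'\to X$ is faithfully flat but \emph{not} locally of finite presentation (think of $f\colon\spec\widehat{A}\to\spec A$), so ``morphisms to an algebraic stack satisfy faithfully flat descent'' is an appeal to effectivity of \emph{fpqc} descent for morphisms to a general algebraic stack. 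Algebraic stacks are fppf stacks; fpqc effectivity is known for quasi-affine diagonal but not in general, and Theorem~\ref{MT:pushout_fmv} is precisely the input used in \cite{hallj_dary_coherent_tannakian_duality} to get at such statements for non-separated diagonals. Assuming it here is circular. Second, and relatedly, the ``smooth d\'evissage on $T$'' is not formal: given a triple $(g',g_U,\phi)$ and a smooth affine presentation $T_0\to T$, the pullbacks $X'\times_T T_0\to X'$ and $U\times_T T_0\to U$ are representable, smooth and locally of finite presentation but not affine, and you must \emph{glue them to a smooth cover of $X$} before you can invoke the affine case. That gluing is Theorem~\ref{MT:glue_fmv}\itemref{MTI:glue_fmv:algsp-exc}, and it is there---not in the quasi-coherent gluing---that excellence and N\'eron--Popescu enter: one reduces to $X=\spec\widehat{A}_x$, where $f$ acquires a section and Lemma~\ref{L:sections-give-MV} applies. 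Once that cover $X_0\to X$ exists, descent is along the finitely presented smooth morphism $X_0\to X$, where effectivity is unproblematic; this is how the paper's proof circumvents fpqc descent along $p$.

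You have also misplaced the role of the excellence hypothesis and of Gabber rigidity. The gluing equivalence $\QCOH(X)\simeq\QCOH(X')\times_{\QCOH(U')}\QCOH(U)$, and the identity $\Gamma(X,\Orb_X)\cong\Gamma(X',\Orb_{X'})\times_{\Gamma(U',\Orb_{U'})}\Gamma(U,\Orb_U)$, hold for every tor-independent Mayer--Vietoris square with $j$ quasi-compact (Theorem~\ref{T:MV-QCoh-gluing}) with no excellence, no completion, no Popescu: the proof is via the derived-category recollement of Proposition~\ref{P:MV_SQ_MB}. Gabber's rigidity theorem concerns \'etale sheaves of sets (equivalently clopen decompositions of finite covers) and feeds into Theorem~\ref{MT:etale-gluing-for-wmv}; it is not an input to quasi-coherent gluing. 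So step (iv) of your plan is addressing the wrong category, while the place your proof actually needs excellence---gluing the locally finitely presented smooth charts of $T$---is left unargued.
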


For a discussion of $G$-rings, see \cite[Tag \spref{07GG}]{stacks-project}. An algebraic stack is locally excellent if it admits a smooth cover by an excellent scheme; in particular, algebraic stacks that are locally of finite type over the spectrum of a field, $\Z$, or a complete local noetherian ring are locally excellent. 
Theorem \ref{MT:pushout_fmv} is the key technical result used to establish Tannaka 
duality for algebraic stacks with non-separated diagonals 
\cite{hallj_dary_coherent_tannakian_duality}---if the diagonals are separated, then 
\cite[Cor.~6.5.1(g)]{MR1432058} is sufficient for the Tannakian application.

In general, Mayer--Vietoris squares are interesting since objects can be glued along
them. To formalize this, consider a $2$-presheaf $\tfF\colon
  (\STACKS{X})^\opp\to \CAT$~\cite[App.~D]{MR2774654}, e.g., the $2$-presheaf
  $\tfF(-)=\QCOH(-)$ of quasi-coherent sheaves of modules. By pull-back, we
  obtain a functor
  \[
  \Phi_\tfF\colon \tfF(X)\to  \tfF(X')\times_{\tfF(U')} \tfF(U) 
  \]
  where the right-hand side denotes triples $(W',\theta,W_U)$ where
  $W'\in \tfF(X')$, $W_U\in \tfF(U)$ and
  $\theta\colon j'^*W'\to f_U^*{W_U}$ is an
  isomorphism. When the functor $\Phi_\tfF$ is an equivalence, we
  say that we can glue $\tfF$ along the square. 

  We do not have any general gluing results for $2$-\emph{sheaves} as for
  \'etale neighborhoods~\cite[Thm.~A]{MR2774654}---nor do we expect such---but we
  will give gluing results for the following $2$-presheaves, where the values over an algebraic stack $Y$ are as follows:
  \begin{itemize}[leftmargin=2cm]
    \item[$\QCOH(Y)$] the category of quasi-coherent sheaves of $\Orb_Y$-modules.
    \item[$\AFF(Y)$] the category of affine morphisms $Y'\to Y$.
    \item[$\QAFF(Y)$] the category of quasi-affine morphisms $Y'\to Y$.
    \item[$\AlgSp(Y)$] the category of representable morphisms $Y'\to Y$.
    \item[$\AlgSp_\lfp(Y)$] the category of representable morphisms $Y'\to Y$,
      locally of finite presentation.
    \item[$\Hom(Y,W)$] the groupoid of morphisms $Y\to W$ to a fixed algebraic stack $W$.
    \item[$\Et(Y)$] the category of \'etale representable morphisms $Y'\to Y$,
      or equivalently, the category of cartesian sheaves
      of sets on the lisse-\'etale site of $Y$.
    \item[$\Et_c(Y)$] the category of finitely presented \'etale representable
      morphisms $Y'\to Y$, or equivalently, the category of constructible
      sheaves of sets.
  \end{itemize}
  We prove the following gluing results. 
  \begin{maintheorem}\label{MT:glue_fmv}
    Fix a flat Mayer--Vietoris square as in \eqref{E:MV-square}. If $j$ is quasi-compact, 
    then
    \begin{enumerate}
    \item\label{MTI:glue_fmv:qcoh+aff+qaff}
      $\Phi_{\QCOH}$, $\Phi_{\AFF}$ and $\Phi_{\QAFF}$ are
      equivalences of categories;
    \item\label{MTI:glue_fmv:algsp-ff}
      $\Phi_{\AlgSp}$ is fully faithful;
    \item\label{MTI:glue_fmv:hom-ff}
      $\Phi_{\Hom(-,W)}$ is fully faithful for every algebraic stack $W$ and an 
      equivalence if $W$ has quasi-affine diagonal; and
    \item\label{MTI:glue_fmv:algsp-exc}
      $\Phi_{\AlgSp_{\lfp}}$ is an equivalence of categories if $X$ is
      locally the spectrum of a $G$-ring.
    \end{enumerate}
  \end{maintheorem}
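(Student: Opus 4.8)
The plan is to establish~\itemref{MTI:glue_fmv:qcoh+aff+qaff} first; the other parts will follow from it. Since $\QCOH$, $\AFF$ and $\QAFF$ are stacks for the smooth topology, and since flat Mayer--Vietoris squares with $j$ quasi-compact are stable under arbitrary base change, I may assume $X=\spec A$ is affine; then quasi-compactness of $j$ forces the closed complement $Z$ of $U$ to be $V(I)$ for a finitely generated ideal $I$. Applying the weak Mayer--Vietoris property to the infinitesimal neighborhoods $Z_n=\spec(A/I^{n+1})$ --- whose image is $Z$, hence disjoint from $U$ --- shows $Z_n\times_X X'\xrightarrow{\ \sim\ }Z_n$ for all $n$, so that $X'$ contains the formal completion of $X$ along $Z$ and restricts isomorphically onto it. Given a descent datum $(\mathcal{W}',\theta,\mathcal{W}_U)$ I would form the candidate $\mathcal{W}\in\QCOH(X)$ as the fibre product
\[
  \mathcal{W}\;=\;j_*\mathcal{W}_U\ \times_{\,j_*(f_U)_*\,j'^*\mathcal{W}'}\ f_*\mathcal{W}',
\]
the map out of $f_*\mathcal{W}'$ being the unit followed by restriction to $U'$, and $\theta$ being used to identify the two targets; this is a direct generalization of the Beauville--Laszlo gluing theorem. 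The work lies in checking that $\mathcal{W}$ is quasi-coherent --- here one uses that $j$ is quasi-compact and quasi-separated, so $j_*$ preserves quasi-coherence, together with the finite generation of $I$ --- and that the induced maps $j^*\mathcal{W}\to\mathcal{W}_U$ and $f^*\mathcal{W}\to\mathcal{W}'$ are isomorphisms. The latter is where flatness of $f$ enters: $f^*$ is exact, hence commutes with the fibre product, flat base change gives $f^*j_*\cong j'_*f_U^*$, and the identifications $Z_n\times_X X'\cong Z_n$ pin down the completion factor. Because $\mathcal{W}$ is defined by a finite limit, the gluing functor lifts to quasi-coherent $\Orb$-algebras, which gives the equivalence for $\AFF$; the case of $\QAFF$ then follows from $\AFF$ together with the elementary gluing of quasi-compact open immersions along weak Mayer--Vietoris squares.

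For~\itemref{MTI:glue_fmv:algsp-ff} and~\itemref{MTI:glue_fmv:hom-ff}, full faithfulness of $\Phi_{\AlgSp}$ and of $\Phi_{\Hom(-,W)}$ both reduce --- the first by passing to graphs of morphisms, the second to $\underline{\Isom}$-sheaves, and using that flat Mayer--Vietoris squares pull back to flat Mayer--Vietoris squares along any representable morphism --- to the single assertion that for every representable morphism $V\to X$ the canonical map
\[
  \Mor_X(X,V)\ \longrightarrow\ \Mor_{X'}(X',V')\times_{\Mor_{U'}(U',V'_{U'})}\Mor_{U}(U,V_U)
\]
is a bijection. When $V\to X$ is affine this is just~\itemref{MTI:glue_fmv:qcoh+aff+qaff} read on algebras; when $V\to X$ is quasi-affine one realizes $V$ as a quasi-compact open of its affine hull and uses that $f_Z\colon Z'\to Z$ is an isomorphism --- so that no nonempty closed subset of $Z$ has empty preimage in $X'$ --- to conclude that a compatible pair of sections factors through the open; the general representable case follows by d\'evissage along an \'etale presentation of $V$. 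For the essential-surjectivity clause of~\itemref{MTI:glue_fmv:hom-ff}, if $W$ has quasi-affine diagonal then a morphism to $W$ amounts to quasi-affine descent data relative to a smooth presentation of $W$, and these can be glued by the quasi-affine case just treated.

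It remains to prove~\itemref{MTI:glue_fmv:algsp-exc}. Full faithfulness is~\itemref{MTI:glue_fmv:algsp-ff}, so only essential surjectivity is at stake; as this is smooth-local on $X$, I may assume $X=\spec A$ with $A$ excellent. Given a locally finitely presented representable $W'\to X'$ and a compatible $W_U\to U$, the plan is to build $W\to X$ from charts: \'etale-locally $W'$ has an affine chart finitely presented over $X'$, and I would extend it to a finitely presented affine $\bar V\to X$ so that, after gluing $\bar V$ with the corresponding chart over $U$ via~\itemref{MTI:glue_fmv:qcoh+aff+qaff}, the \'etale equivalence relation presenting $W'$ and $W_U$ is transported to one over $X$ whose quotient algebraic space is the sought $W$. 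The step that genuinely uses local excellence --- and which I expect to be the main obstacle --- is precisely this extension of a chart: one has a finitely presented affine over the formal completion of $X$ along $Z$ (the restriction of $W'$) glued with one over $U$, and excellence is what allows such a datum to be algebraized to a finitely presented affine over a neighborhood in $X$, in the spirit of Artin approximation. Once the charts and their \'etale identifications are in hand, patching them --- each ingredient being an instance of~\itemref{MTI:glue_fmv:qcoh+aff+qaff} or of the gluing of finitely presented \'etale morphisms --- and verifying that $\Phi_{\AlgSp_\lfp}$ and the resulting functor are mutually quasi-inverse is formal.
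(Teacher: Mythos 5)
There are genuine gaps in parts \itemref{MTI:glue_fmv:algsp-ff}--\itemref{MTI:glue_fmv:algsp-exc}, and part \itemref{MTI:glue_fmv:qcoh+aff+qaff} is under-justified at its crux. For \itemref{MTI:glue_fmv:qcoh+aff+qaff}, your candidate $\mathcal{W}=j_*\mathcal{W}_U\times_{k_*k^*}f_*\mathcal{W}'$ is exactly the paper's adjoint $\Psi$, and $j^*\mathcal{W}\cong\mathcal{W}_U$ does follow as you say. But the statement that ``the identifications $Z_n\times_X X'\cong Z_n$ pin down the completion factor'' is not an argument: after applying $f^*$ you are left with a fibre product involving the units $\mathcal{W}'\to j'_*j'^*\mathcal{W}'$ and counits $f^*f_*\mathcal{W}'\to\mathcal{W}'$, neither of which is an isomorphism, and showing their defects cancel is the entire content. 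The paper does this via the $t$-exact equivalence $\DQCOH[,Z](X)\simeq\DQCOH[,Z'](X')$ (Proposition \ref{P:MV_SQ_MB}) and the Mayer--Vietoris distinguished triangles, together with Proposition \ref{P:qaff_dom} to reduce the possibly non-representable, non-concentrated $f$ to a quasi-affine one; your reduction to $X$ affine leaves $X'$ an arbitrary flat algebraic stack over $X$, and nothing in your sketch addresses this.

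The more serious gap is in \itemref{MTI:glue_fmv:algsp-ff} and \itemref{MTI:glue_fmv:hom-ff}: your ``d\'evissage along an \'etale presentation of $V$'' requires gluing the pulled-back \'etale covers of $X'$ and $U$ to an \'etale cover of $X$, i.e.\ essential surjectivity of $\Phi_{\Et}$. That is precisely Theorem \ref{MT:etale-gluing-for-wmv}, whose proof occupies Sections \ref{S:etale-sheaves}--\ref{S:etale-gluing} and rests on Gabber's rigidity theorem for henselian pairs (via the blow-up trick of Lemma \ref{L:blow-up-is-MV-square}, the normalization statement Corollary \ref{C:normalization}, and Propositions \ref{P:mono-decomposed}--\ref{P:H0-bij-vs-clopen}). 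None of this is routine, and it cannot be extracted from \itemref{MTI:glue_fmv:qcoh+aff+qaff}. Finally, in \itemref{MTI:glue_fmv:algsp-exc} you misplace both the obstruction and the role of excellence: affine charts over $X'$ and $U$ that agree over $U'$ glue already by \itemref{MTI:glue_fmv:qcoh+aff+qaff} with no excellence hypothesis, so there is nothing to ``algebraize''; the real problem is that an \'etale affine chart of $W'$ and one of $W_U$ need not be compatible over $U'$, so no starting chart exists. The paper resolves this by using excellence through N\'eron--Popescu desingularization: $\widehat{X}_x\to X$ is a filtered limit of smooth morphisms, which permits (after a descent and limit argument) replacing $X$ by a complete local ring, over which $f$ acquires a section $s$; Lemma \ref{L:sections-give-MV} and Corollary \ref{C:gluing-of-maps-of-alg-spaces} then produce $W=s^*W'$ directly. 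An Artin-approximation argument is not set up in your sketch and would in any case have to confront the same descent from the completion back to $X$.
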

  \begin{maintheorem}\label{MT:etale-gluing-for-wmv}
    Fix a weak Mayer--Vietoris square as in \eqref{E:MV-square}. 
    If $j$ is quasi-compact, then $\Phi_{\Et}$ and $\Phi_{\Et_c}$ are equivalences of categories.
  \end{maintheorem}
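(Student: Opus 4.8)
The plan is to reduce the weak case to the flat one, where Theorems~\ref{MT:pushout_fmv} and~\ref{MT:glue_fmv} apply, using the henselization of $X$ along $Z$ as an auxiliary base. The operative principle is that the small \'etale topos of a scheme depends only on a henselian neighbourhood of any closed subscheme, and a weak Mayer--Vietoris square pins down precisely such a neighbourhood of $Z$.

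First the reductions. The $2$-presheaves $Y\mapsto\Et(Y)$ and $Y\mapsto\Et(X'\times_X Y)\times_{\Et(U'\times_X Y)}\Et(U\times_X Y)$ are \'etale stacks on $X$, and weak Mayer--Vietoris squares with $j$ quasi-compact are stable under \'etale base change, so we may take $X=\spec A$; a limit argument reduces the $\Et_c$-statement to $A$ of finite type over $\Z$, hence excellent, and the $\Et$-statement then follows by a filtered-colimit argument, every \'etale morphism being a filtered colimit of finitely presented ones. Put $Z=V(I)$, let $X^h=\spec A^h_I$ be the henselization of the pair $(A,I)$, and $U^h=X^h\times_X U=X^h\setminus Z$. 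Now bring in two flat Mayer--Vietoris squares over excellent bases. The first is the henselization square $U^h\to X^h\to X\leftarrow U$: Theorem~\ref{MT:glue_fmv}\itemref{MTI:glue_fmv:algsp-exc} makes $\Phi_{\AlgSp_\lfp}$ an equivalence for it, and since $\{X^h\to X,\,U\hookrightarrow X\}$ is an fpqc cover of $X$ and the properties ``\'etale'', resp.\ ``\'etale of finite presentation'', are fpqc-local on the base, it follows that $\Phi_{\Et}$ (resp.\ $\Phi_{\Et_c}$) is an equivalence for it, i.e.\ $\Et(X)\xrightarrow{\,\sim\,}\Et(X^h)\times_{\Et(U^h)}\Et(U)$ (the same circle of ideas re-proves Gabber's rigidity theorem). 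The second is the base change of~\eqref{E:MV-square} along $X^h\to X$, which is again a flat Mayer--Vietoris square, now over $X^h$, because $f$ is flat in a neighbourhood of $Z$ (Lemma~\ref{L:MV-different-notions}) and $X^h\to X$ factors through every such neighbourhood; the same reasoning gives $\Et(X^h)\xrightarrow{\,\sim\,}\Et(X'\times_X X^h)\times_{\Et(U'\times_X X^h)}\Et(U^h)$.

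Splicing the two equivalences produces $\Et(X)\xrightarrow{\,\sim\,}\Et(X'\times_X X^h)\times_{\Et(U'\times_X X^h)}\Et(U)$, and one checks this is $\Phi_{\Et}$ for~\eqref{E:MV-square} followed by the functor $G$ pulling back along $q\colon X'\times_X X^h\to X'$ (and the identity on the $U$-factor). It remains to pass from ``$G\circ\Phi_{\Et}$ is an equivalence'' to ``$\Phi_{\Et}$ is an equivalence''. This is effective descent along the jointly surjective family $\{q,\,j'\colon U'\inj X'\}$, which is an fpqc cover of $X'$ precisely because $q$ is affine (pulled back from $X^h\to X$) and $j'$ is quasi-compact (pulled back from $j$); faithfulness of $\Phi_{\Et}$, then fullness (using that $G$ is faithful), then essential surjectivity, all reduce to gluing morphisms and objects of $\Et$ along this cover. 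The only step that is not formal is the cocycle condition over the self-intersection $X'\times_X(X^h\times_X X^h)$: it holds because the two pullbacks of a morphism of \'etale spaces agree over $Z$ (restricting to the same morphism along the diagonal) and over the open complement of $Z$ (where everything is pulled back from $U$), hence agree everywhere, the equalizer of two morphisms of \'etale spaces being an open subscheme.

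I expect the main obstacle to be the middle step: constructing the two auxiliary \emph{flat} Mayer--Vietoris squares over excellent bases. This uses the noetherian description of weak Mayer--Vietoris squares (Lemma~\ref{L:MV-different-notions}) and the fact that the henselization of $(A,I)$ is concentrated near $Z$, and it is complicated by $f$ not being of finite presentation, so the reduction to an excellent base is not routine. Granting that, the gluing statement along~\eqref{E:MV-square} itself is formal, as in the third paragraph.
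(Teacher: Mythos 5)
Your reduction hinges on the claim that, near $Z$, the weak square becomes a \emph{flat} Mayer--Vietoris square because ``$f$ is flat in a neighbourhood of $Z$ (Lemma~\ref{L:MV-different-notions})''. That implication of Lemma~\ref{L:MV-different-notions} requires \emph{both} $X$ and $X'$ to be locally noetherian, and a weak Mayer--Vietoris square imposes no finiteness on $X'$. Example~\ref{E:weak-but-not-MV} is a direct counterexample to your claim: there $X=\spec k[x]$ is excellent and $Z=V(x)$ is a single closed point, but $X'=\spec B$ is non-noetherian and the square is weak without being tor-independent; since a weak square with $f$ flat at every point of $Z'$ \emph{is} tor-independent (Lemma~\ref{L:MV-different-notions}, no noetherian hypothesis needed for that implication), $f$ cannot be flat at the point of $Z'$, and no localization or henselization near $Z$ repairs this. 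Theorem~\ref{MT:etale-gluing-for-wmv} is stated for weak squares precisely to cover such examples, so the strategy of reducing to the flat case cannot succeed: the non-flat, non-tor-independent case is the entire content. Relatedly, your limit argument reducing to $A$ of finite type over $\Z$ does not apply to the square itself, because $f\colon X'\to X$ is not of finite presentation (the motivating examples are completions), so the square does not descend to any finite-type subring.

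There is also a structural circularity: you invoke Theorem~\ref{MT:glue_fmv}\itemref{MTI:glue_fmv:algsp-exc} for your two auxiliary squares, but in the paper that statement is proved \emph{using} Theorem~\ref{MT:etale-gluing-for-wmv} (via Proposition~\ref{P:cocartesian-in-cat-of-quasi-DM}, which rests on Corollary~\ref{C:wMV-is-MV-Et} and Theorem~\ref{T:gluing-of-etale}), together with Popescu's theorem; so even where your flatness claim holds, the argument assumes what is to be proved. For comparison, the paper's route makes no attempt to flatten the square: it proves gluing by recollement for Mayer--Vietoris $\Et$-squares (Theorem~\ref{T:gluing-of-etale}) and then verifies that every weak square with $j$ quasi-compact is such a square (Corollary~\ref{C:wMV-is-MV-Et}). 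The latter does henselize at points of $Z$, as you do, but then reduces to Gabber's rigidity theorem, which is proved by blowing up to a \emph{tor-independent} square (Lemma~\ref{L:blow-up-is-MV-square}), passing to integral closures (Corollary~\ref{C:normalization}), and using only the quasi-coherent gluing of Section~\ref{S:QCoh-gluing} to identify idempotents. That blow-up/tor-independence step is the substitute for the flatness you assume, and it is the essential idea missing from your proposal.
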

  Theorem \ref{MT:pushout_fmv} essentially follows from Theorem \ref{MT:glue_fmv}\itemref{MTI:glue_fmv:algsp-exc}.
  Theorem \ref{MT:glue_fmv}\itemref{MTI:glue_fmv:algsp-ff}--\itemref{MTI:glue_fmv:algsp-exc} relies upon Theorem~\ref{MT:etale-gluing-for-wmv}
  and general N\'eron--Popescu desingularization \cite{MR818160}.
  Gabber's rigidity
  theorem (Theorem~\ref{T:rigidity-theorem}) features in the proof of 
  Theorem~\ref{MT:etale-gluing-for-wmv}. Both results also depend upon some further 
  gluing results for quasi-coherent sheaves.

  To prove Theorem \ref{MT:glue_fmv}, we approximately follow the approach of 
  \cite{MR1432058}. The main idea is pass to a square as in 
\eqref{E:MV-square} where $f\colon X' \to X$ is replaced by its diagonal $\Delta_f 
\colon X' \to X'\times_X X'$ and $j\colon U \to X$ is replaced by $j'\times j' \colon 
U'\times_{X} U' \to X'\times_X X'$. In particular, unless $f$ is unramified the resulting square will not be a flat Mayer--Vietoris square. Moreover, even if $X$ is a locally noetherian algebraic stack, then unless $f$ is locally of finite type, $X'\times_X X'$ has no reason to be locally noetherian. For example, in applications one often takes $X=\spec A$ and $X'=\spec \hat{A}$, where $\hat{A}$ denotes the $I$-adic completion with respect to some ideal $I$ of $A$; in this situation, $X'\times_X X'$ is only noetherian when $X'=X$.

To manage such squares, we have the following natural variant of what Moret-Bailly considered.
\begin{definition}
  A \emph{tor-independent Mayer--Vietoris square} is a weak Mayer--Vietoris square 
  as in \eqref{E:MV-square} such that every morphism of algebraic stacks $W \to X$ with image disjoint from $U$ is tor-independent of $f$ (Definition \ref{D:f-flat}).
\end{definition}
In other words, the
\emph{derived} base change $X'\overset{\RDERF}{\times_X} W\to W$ is an
isomorphism for every $W\to X$ with image disjoint from $U$.

If $f$ is affine and $U$ is the complement of a finitely presented closed immersion $i\colon Z \inj X$, then a tor-independent Mayer--Vietoris square is the same as a triple $(X,Z,X')$ satisfying the (TI) condition in the terminology of~\cite[0.2, 0.6]{MR1432058} (Lemma \ref{L:MV-open-qc}\itemref{LI:MV-open-qc:ti}). If $X'$ and $X$ are locally noetherian, then tor-independent Mayer--Vietoris squares are very similar to flat Mayer--Vietoris squares (Lemma \ref{L:MV-different-notions}).  
We now state our gluing result for tor-independent Mayer--Vietoris squares, which we can prove for $f$-flat objects (see Definition \ref{D:f-flat}).
\begin{maintheorem}\label{MT:glue_timv}
  Fix a tor-independent Mayer--Vietoris square as in \eqref{E:MV-square}. If $j$ is 
  quasi-compact, then 
  \begin{enumerate}
  \item \label{MTI:glue_timv:qcoh}$\Phi_{\QCOH_{f-\mathrm{fl}}}$ is an equivalence and
  \item \label{MTI:glue_timv:algsp}$\Phi_{\AlgSp_{f-\mathrm{fl}}}$ is fully faithful.
  \end{enumerate}
\end{maintheorem}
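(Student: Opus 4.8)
The plan is to reduce the tor-independent case to the flat case already handled (conceptually) by Theorem \ref{MT:glue_fmv}, or rather to re-run the affine-descent argument with the flatness hypothesis on $W'\to W$ replaced by tor-independence of $W\to X$ with $f$. First I would reduce to the situation where $X=\spec A$ is affine, $i\colon Z\inj X$ is a finitely presented closed immersion with complement $U=\spec A[1/g]$ (or, after further reduction, cut out by a single element, using that $j$ is quasi-compact so $U$ is a quasi-compact open), and $X'=\spec A'$ is affine over $X$; by Lemma \ref{L:MV-open-qc} this is exactly a triple satisfying (TI) in the sense of \cite[0.2,0.6]{MR1432058}. In this affine setting, part \itemref{MTI:glue_timv:qcoh} becomes the statement that the category of $f$-flat $A$-modules is equivalent to the category of triples $(M',\theta,M_U)$ with $M'$ an $f$-flat $A'$-module, $M_U$ an $A[1/g]$-module, and $\theta$ an isomorphism after base change to $A'[1/g]$; the functor sends $M$ to $(M\tensor_A A', \mathrm{can}, M[1/g])$. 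The quasi-inverse should send a triple to the fibre product $M:=M'\times_{M'[1/g]}M_U$ (equivalently the kernel of $M'\oplus M_U\to M'[1/g]$, using $\theta$), and the content is to check that for $f$-flat data this $M$ is again $f$-flat, that $M\tensor_A A'\to M'$ and $M[1/g]\to M_U$ are isomorphisms, and that the unit $M\to M'\times_{M'[1/g]}M[1/g]$ is an isomorphism for $f$-flat $M$.

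The key technical input is the long exact Mayer--Vietoris sequence associated to the square: for any complex, tensoring with the data of the square and using tor-independence (Definition \ref{D:f-flat}) of $i$ with $f$, one gets that $\RDERF\Gamma$ of the square is a homotopy cartesian square, i.e. there is a distinguished triangle $\RDERF\Gamma(X,\shv{F})\to \RDERF\Gamma(X',\shv{F}')\oplus\RDERF\Gamma(U,\shv{F}_U)\to\RDERF\Gamma(U',\shv{F}_{U'})\xrightarrow{+1}$ — this is precisely where tor-independence (rather than mere flatness of $f$) is the right hypothesis, because it is what makes $\RDERF\Gamma_Z$ commute with $-\ltensor_A A'$. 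For an $f$-flat module, the $f$-flatness exactly kills the higher Tor's that would otherwise obstruct the fibre-product computation, so the triangle degenerates into a short exact sequence $0\to M\to M'\oplus M_U\to M'[1/g]\to 0$ (after identifying $M_U[1/g]\cong M'[1/g]$ via $\theta$), which both proves exactness and descent and identifies the quasi-inverse. I would assemble this from the derived-category gluing results for quasi-coherent sheaves that the introduction says both Theorem \ref{MT:glue_fmv} and Theorem \ref{MT:etale-gluing-for-wmv} depend on, specialized to the affine tor-independent square.

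For part \itemref{MTI:glue_timv:algsp}, full faithfulness of $\Phi_{\AlgSp_{f\text{-}\mathrm{fl}}}$ means: given $f$-flat representable morphisms $Y_1,Y_2\to X$, the map $\Hom_X(Y_1,Y_2)\to \Hom_{X'}(Y_1',Y_2')\times_{\Hom_{U'}(Y_{1,U'},Y_{2,U'})}\Hom_U(Y_{1,U},Y_{2,U})$ is a bijection. I would deduce this from the quasi-coherent statement \itemref{MTI:glue_timv:qcoh} by the standard device of replacing the Hom-functor by a sheaf $\underline{\Hom}_X(Y_1,Y_2)$ or, more concretely, by working $f$-flat-locally on $Y_1$ and $Y_2$, reducing to the case where $Y_1,Y_2$ are (relatively) affine over $X$ given by quasi-coherent algebras $\shv{B}_1,\shv{B}_2$, and then a morphism is an $\Orb_X$-algebra map $\shv{B}_2\to\shv{B}_1$, which glues by \itemref{MTI:glue_timv:qcoh} applied to the underlying modules together with compatibility of the glued module structure with multiplication and unit (these are detected by the same fibre-product, hence automatically descend). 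One does not get essential surjectivity here — a glued algebra object need not be $f$-flat, and even if it were, finite presentation and the Néron--Popescu step used for Theorem \ref{MT:glue_fmv}\itemref{MTI:glue_fmv:algsp-exc} are unavailable without an excellence hypothesis — which is why the statement only claims fully faithful.

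The main obstacle I anticipate is organizing the $f$-flatness bookkeeping in the non-noetherian setting: unlike the flat Mayer--Vietoris case, here $X'\times_X X'$ (which appears when one passes to diagonals, as in the paragraph before the definition) is genuinely non-noetherian, so one cannot appeal to coherence or noetherian induction, and one must phrase everything in terms of tor-independence and $f$-flat objects from the outset, tracking that $f$-flatness is preserved under the fibre-product construction and under the base changes $-\tensor_A A'$ and $-[1/g]$; getting the Mayer--Vietoris triangle and its degeneration to hold at the level of $f$-flat modules, with all the identifications $\theta$-compatible, is where the real care is needed.
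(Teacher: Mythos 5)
Your mechanism for part \itemref{MTI:glue_timv:qcoh} in the affine case is essentially the paper's: the derived Mayer--Vietoris triangle degenerates, for $f$-flat modules, to a short exact sequence $0\to M\to M'\oplus M_U\to M'_{U'}\to 0$, identifying the quasi-inverse with the fibre product $f_*N'\times_\alpha j_*N_U$ (this is Theorem \ref{T:MV-QCoh-gluing}, proved via Proposition \ref{P:MV_SQ_MB} and the triangles of a Mayer--Vietoris $\DQCOH$-square). But your opening reduction to ``$X'=\spec A'$ affine over $X$'' is unjustified and skips the actual new content of the theorem. A tor-independent Mayer--Vietoris square only has $f$ an arbitrary morphism of algebraic stacks; the affine case is exactly Moret-Bailly's (TI) setting that the theorem is meant to generalize. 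The property of being such a square is flat-local on $X$ (Lemma \ref{L:mv_bc}\itemref{LI:mv_bc:verify}), so you may take $X$ affine, but nothing lets you take $X'$ affine. The paper bridges this with Proposition \ref{P:qaff_dom} (smooth-locally on $X$ there is an \'etale neighborhood $X''\to X'$ of $Z'$ with $X''\to X$ quasi-affine), proves the gluing when $X'$ is a quasi-compact quasi-separated algebraic space, and then recovers $\QCOH(X')\simeq \QCOH(X'')\times_{\QCOH(U'')}\QCOH(U')$ from the \'etale-neighborhood gluing theorem of \cite{MR2774654} to bootstrap to general $f$. Without some such step your argument only reproves the known affine case.

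For part \itemref{MTI:glue_timv:algsp}, the step ``working $f$-flat-locally on $Y_1$ \emph{and} $Y_2$, reducing to the case where $Y_1,Y_2$ are relatively affine'' does not work: $\Hom_X(Y_1,Y_2)$ is a sheaf in $Y_1$ but one cannot localize on the target $Y_2$ --- a morphism into $Y_2$ is not assembled from morphisms into the pieces of a cover of $Y_2$ without already knowing the preimages, which is circular. This is precisely the difficulty the paper's Proposition \ref{P:cocartesian-in-cat-of-quasi-DM} is built to overcome: one pulls the square back along the $f$-flat morphism $Y_1\to X$ (still a tor-independent square by Lemma \ref{L:mv_bc}\itemref{LI:mv_bc:mv}), reduces full faithfulness to essential surjectivity for the fibres of the diagonal, and then, given compatible maps over $X'$ and $U$, pulls back an \'etale presentation of the target to obtain an object of $\Et(X')\times_{\Et(U')}\Et(U)$, which glues to an \'etale morphism $E\to X$ by Theorem \ref{MT:etale-gluing-for-wmv}; replacing $X$ by $E$ eventually lands in the affine case where Theorem \ref{T:MV-QCoh-gluing} applies to $\Orb_X$. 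That \'etale gluing rests on Gabber's rigidity theorem (Theorem \ref{T:rigidity-theorem}) and is an essential ingredient absent from your proposal; your algebra-gluing argument alone only treats targets that are affine over $X$. (Your closing remark about why only full faithfulness is claimed is correct in spirit: essential surjectivity of $\Phi_{\AlgSp}$ is what needs excellence and N\'eron--Popescu.)
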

For tor-independent Mayer--Vietoris squares, we prove the following non-noetherian variant of Theorem \ref{MT:pushout_fmv}. 
\begin{maintheorem}\label{MT:timv_dm_push}
  Fix a tor-independent Mayer--Vietoris square as in \eqref{E:MV-square}. If $j$ 
  is quasi-compact, then it is a pushout in the $2$-category of Deligne--Mumford 
  stacks.
\end{maintheorem}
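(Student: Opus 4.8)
The plan is to verify the universal property of the pushout directly. By definition, the square~\eqref{E:MV-square} is a pushout in the $2$-category of Deligne--Mumford stacks if and only if, for every Deligne--Mumford stack $T$, the pull-back functor
\[
\Phi_{\Hom(-,T)}\colon \Hom(X,T)\longrightarrow \Hom(X',T)\times_{\Hom(U',T)}\Hom(U,T)
\]
is an equivalence of categories. Two reductions are immediate. First, a tor-independent Mayer--Vietoris square is in particular a weak one, so Theorem~\ref{MT:etale-gluing-for-wmv} is available; second, the property of being a tor-independent (hence weak) Mayer--Vietoris square with $j$ quasi-compact is stable under base change along any flat, in particular any \'etale, morphism $Y\to X$. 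The stability under flat base change holds because $\Orb_W\ltensor_{\Orb_Y}\bigl(\Orb_{X'}\tensor_{\Orb_X}\Orb_Y\bigr)\homotopic \Orb_W\ltensor_{\Orb_X}\Orb_{X'}$ whenever $Y\to X$ is flat, so tor-independence over $X$ of objects disjoint from $U$ passes to $Y$.

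The core of the argument is a reduction to the case where $T$ is an affine scheme. Fix a representable \'etale surjection $p\colon V\to T$ with $V$ a scheme and put $R=V\times_T V$, a scheme \'etale over $V$ via either projection. For any algebraic stack $Y$, a morphism $Y\to T$ is the same as: an \'etale surjective object $E=Y\times_T V\in\Et(Y)$, the projection $a\colon E\to V$, and a cocycle isomorphism with values in $R$, with the obvious notion of morphism of such data; this is natural in $Y$, so $\Phi_{\Hom(-,T)}$ becomes the associated functor on descent data. The objects $E$, and isomorphisms between them, glue by Theorem~\ref{MT:etale-gluing-for-wmv}; \'etale surjectivity of the glued $E\to X$ is inherited from $E'\to X'$ and $E_U\to U$ since $f$ is surjective over $Z$ (as $Z\times_X X'\to Z$ is an isomorphism). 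Base-changing~\eqref{E:MV-square} along the glued $E\to X$ (respectively along $E\times_X E\to X$) yields again a tor-independent Mayer--Vietoris square with quasi-compact open immersion, so it remains to glue the morphism $a$ and the cocycle, that is, to glue morphisms into the \emph{schemes} $V$ and $R$. Covering $V$ and $R$ by affine opens and descending once more along these Zariski (a fortiori \'etale) covers, we are reduced to showing that $\Phi_{\Hom(-,\spec B)}$ is an equivalence for every ring $B$.

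The affine case uses quasi-coherent gluing. For $T=\spec B$ the categories in sight are discrete and $\Hom(Y,T)=\Hom_{\Z\text{-alg}}\bigl(B,\Gamma(Y,\Orb_Y)\bigr)$; as $\Hom_{\Z\text{-alg}}(B,-)$ preserves limits, it suffices to prove that for every tor-independent Mayer--Vietoris square with $j$ quasi-compact the canonical ring homomorphism $\Gamma(Y,\Orb_Y)\to \Gamma(Y',\Orb_{Y'})\times_{\Gamma(Y'_{U'},\Orb)}\Gamma(Y_U,\Orb)$ is an isomorphism (here $Y_U,Y',Y'_{U'}$ denote the other three corners). Now $\Orb_Y$ is $f$-flat and corresponds under $\Phi_{\QCOH_{f-\mathrm{fl}}}$ to $(\Orb_{Y'},\mathrm{can},\Orb_{Y_U})$; since that functor is an equivalence by Theorem~\ref{MT:glue_timv}\itemref{MTI:glue_timv:qcoh}, full faithfulness on this object identifies $\Gamma(Y,\Orb_Y)=\Hom_{\Orb_Y}(\Orb_Y,\Orb_Y)$ with the displayed fibre product, visibly as rings. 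This completes the reduction.

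The substantive inputs---Theorem~\ref{MT:etale-gluing-for-wmv}, which rests on Gabber's rigidity theorem, and the quasi-coherent gluing of Theorem~\ref{MT:glue_timv}\itemref{MTI:glue_timv:qcoh}---are already in hand, so the real work here is the descent bookkeeping, peeling $T$ down through a scheme atlas to affine opens. The main point requiring care is that each gluing step invokes a genuine \emph{equivalence} of categories, not merely full faithfulness (which is why the only-fully-faithful statement $\Phi_{\AlgSp_{f-\mathrm{fl}}}$ is not used here); granting this, the associativity of the glued cocycle---and, more generally, any identity between glued morphisms---follows from the faithfulness already established for morphisms into $R$.
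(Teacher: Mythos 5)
Your proof is correct and follows essentially the same route as the paper's (Proposition~\ref{P:cocartesian-in-cat-of-quasi-DM}): both reduce, via the \'etale gluing of Theorem~\ref{MT:etale-gluing-for-wmv}, to morphisms into an affine target, where the statement becomes the ring isomorphism $\Gamma(X,\Orb_X)\cong\Gamma(X',\Orb_{X'})\times_{\Gamma(U',\Orb_{U'})}\Gamma(U,\Orb_U)$ supplied by Theorem~\ref{MT:glue_timv}\itemref{MTI:glue_timv:qcoh}. The only difference is bookkeeping: the paper establishes full faithfulness first by induction on the diagonal and then pulls back an affine \'etale presentation of the quasi-compactified target, whereas you package everything as descent data along an atlas $V\to T$ and insert one extra layer of Zariski descent to handle a non-affine $V$ and $R$.
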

Since we make no separation assumptions on our algebraic stacks, Theorem 
\ref{MT:timv_dm_push} generalizes recent work of Bhatt 
\cite{2014arXiv1404.7483B}. Note, however, that while Bhatt uses (derived) 
Tannaka duality to prove a version of Theorem \ref{MT:timv_dm_push} for quasi-compact and quasi-separated algebraic spaces, we work in the opposite direction (i.e., we use pushouts to prove Tannaka duality in~\cite{hallj_dary_coherent_tannakian_duality}). 

\begin{remark}
  While it may appear that our results are weaker than the
  corresponding \'etale gluing results~\cite{MR2774654}
  because we require $j$ to be
  quasi-compact, this turns out to not be the case. Indeed,
  smooth-locally on $X$ there is an \'etale 
  neighborhood $X''$ of $Z'$ in $X'$ such that $X''\to X$ is quasi-affine
  (Proposition~\ref{P:qaff_dom}). If $f\colon X'\to X$ is an \'etale
  neighborhood, then $X''\to X$ is of finite presentation so we can
  find an open $U_0 \subseteq 
  U$ such that $U_0 \to X$ is quasi-compact and the resulting square with $X'' \to X$ is an 
  \'etale neighborhood of $X\smallsetminus U_0$.
\end{remark}

\subsection*{Overview}
  In Section~\ref{S:prelim} we give some preliminaries on tor-independence. In
  Section~\ref{S:MV} we compare the different notions of Mayer--Vietoris
  squares and give several examples.  In Section~\ref{S:QCoh-gluing} we glue
  quasi-coherent sheaves in tor-independent Mayer--Vietoris squares
  (Theorem~\ref{MT:glue_timv}\itemref{MTI:glue_timv:qcoh} and
  Theorem~\ref{MT:glue_fmv}\itemref{MTI:glue_fmv:qcoh+aff+qaff}).

  In Section~\ref{S:etale-sheaves} we prove some fundamental theorems for
  \'etale sheaves of sets on algebraic stacks. In particular, we prove that
  every sheaf on a quasi-compact and quasi-separated algebraic stack is a
  filtered colimit of constructible sheaves. We also discuss henselian pairs of stacks.

  In Section~\ref{S:etale-gluing} we prove Gabber's rigidity theorem and glue
  \'etale sheaves in weak Mayer--Vietoris squares
  (Theorem~\ref{MT:etale-gluing-for-wmv}).
  In the noetherian case, Gabber's rigidity theorem follows immediately from
  Ferrand--Raynaud~\cite[App.]{MR0272779}. In the non-noetherian case, which is
  essential for the applications in this paper, the previous
  proof~\cite[Exp.~20]{MR3309086} was much more involved. Using our results on
  gluing of sheaves, we provide a self-contained proof (for $H^0$ but the methods
  can be extended to $H^1$).

  In Section~\ref{S:algsp-gluing} we glue algebraic spaces and prove that
  Mayer--Vietoris squares are pushouts (Theorems~\ref{MT:pushout_fmv},
  \ref{MT:glue_fmv}, \ref{MT:glue_timv}\itemref{MTI:glue_timv:algsp},
  and~\ref{MT:timv_dm_push}).

  \subsection*{Acknowledgements}
  We would like to thank an anonymous referee for a number of helpful comments and suggestions. 
\section{Preliminaries}\label{S:prelim}
Here we record some preliminary results that will be of use in subsequent sections. 
Most of these are globalizations of the affine results proved in 
\cite[\S2]{MR1432058}. We begin with the following definition.
\begin{definition}\label{D:f-flat}
  Let $f\colon X' \to X$ and $g\colon W \to X$ be morphisms of algebraic stacks.  Let $N 
  \in \QCOH(X')$ and $M \in \QCOH(W)$.
  \begin{enumerate}
  \item We say that $M$ and $N$ are \emph{tor-independent} if 
    $\mathscript{T}or_i^{X,f,g}(N,M) = 0$
    for all $i>0$ \cite[App.~C]{MR3589351}. Equivalently, for all smooth 
    morphisms $\spec A \to X$, $\spec A' \to \spec A \times_X X'$ and $\spec B \to \spec 
    A\times_X W$ we have 
    \[
    \Tor^A_i(N(\spec A' \to X'),M(\spec B \to W)) = 0
    \]
    for all $i>0$. 
  \item We say that $M$ is \emph{$f$-flat} if it is tor-independent of $\Orb_{X'}$.  We let 
    $\QCOH_{f-\mathrm{fl}}(W) \subseteq \QCOH(W)$ denote the subcategory of $f$-flat 
    quasi-coherent sheaves on $W$.
  \item We say that $g$ is \emph{$f$-flat} if $\Orb_W$ is $f$-flat. Note that $g$ is $f$-flat 
    if and only if $f$ is $g$-flat. In particular, we may also say that $f$ and $g$ are 
    tor-independent.
  \end{enumerate}
\end{definition}
The following lemma is immediate from the definitions (we employ the notational conventions from \cite{perfect_complexes_stacks}).
\begin{lemma}\label{L:der_char_fflat}
  Let $f\colon X' \to X$ be a morphism of algebraic stacks and let $M \in 
  \QCOH(X)$. Then $M$ is $f$-flat if and only if the natural map $\LDERF \QCPBK{f}M \to 
  f^*M$ is a quasi-isomorphism in $\DQCOH(X')$.
\end{lemma}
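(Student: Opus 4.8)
The plan is to reduce everything to a computation of $\Tor$ over a ring via smooth descent. Recall that a morphism in $\DQCOH(X')$ is a quasi-isomorphism if and only if it remains one after pullback along a smooth cover, and that $\LDERF\QCPBK{f}$ is compatible with the (exact, since flat) pullback along a smooth morphism, this being essentially built into the definition of the quasi-coherent derived pullback on algebraic stacks in \cite{perfect_complexes_stacks}. So I would begin by choosing a smooth presentation $p\colon \spec A \to X$; then the second projection $\spec A\times_X X'\to X'$ is a smooth surjection, and choosing a further smooth presentation $\spec A'\to \spec A\times_X X'$ by an affine scheme yields a commutative square
\[
\vcenter{\xymatrix{
  \spec A'\ar[r]^-{\phi}\ar[d] & \spec A\ar[d]^p\\
  X'\ar[r]^-f & X,}}
\]
whose vertical maps are smooth covers and whose top horizontal map $\phi$ corresponds to a ring homomorphism $A\to A'$. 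Writing $M_A := M(\spec A\to X)$, the compatibility above identifies the pullback of the natural map $\LDERF\QCPBK{f}M\to f^*M$ to the atlas $\spec A'$ with the natural map $A'\ltensor_A M_A\to A'\tensor_A M_A$ of complexes of $A'$-modules.

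Next, this last map of complexes is a quasi-isomorphism if and only if $\Tor_i^A(A',M_A)=0$ for all $i>0$, since its cone computes the modules $\Tor_i^A(A',M_A)$ in degree $-i$. Letting the pair $(\spec A\to X,\ \spec A'\to \spec A\times_X X')$ range over all such choices and invoking smooth descent for quasi-isomorphisms, we conclude that $\LDERF\QCPBK{f}M\to f^*M$ is a quasi-isomorphism in $\DQCOH(X')$ if and only if $\Tor_i^A\bigl(A',M(\spec A\to X)\bigr)=0$ for all $i>0$ and all such data. By the smooth-local characterization of tor-independence in Definition~\ref{D:f-flat}, applied with $N=\Orb_{X'}$ and $g=\ID{X}$ (so that $\spec A\times_X W=\spec A$ and we may take $B=A$), this is exactly the condition that $M$ be $f$-flat.

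The only step that is not completely formal is the compatibility of $\LDERF\QCPBK{f}$ with the smooth base change $\spec A\to X$, i.e., that the pullback of $\LDERF\QCPBK{f}M$ to $\spec A'$ is represented by $A'\ltensor_A M_A$; but with the conventions of \cite{perfect_complexes_stacks} this is immediate (a smooth morphism is flat, so its pullback is exact and commutes with $\LDERF$), and I do not anticipate any real obstacle. The argument is thus genuinely short, as the statement asserts.
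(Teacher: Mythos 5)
Your argument is correct and is exactly the unwinding that the paper has in mind: the paper offers no proof, stating only that the lemma is ``immediate from the definitions,'' and your reduction to an affine atlas, the identification of the pulled-back map with $A'\ltensor_A M_A\to A'\tensor_A M_A$, and the observation that its cone computes $\Tor_i^A(A',M_A)$ is precisely that unwinding (your choice $B=A$ loses nothing since any smooth $\spec B\to\spec A$ is flat). Nothing further is needed.
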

The following notation will also be useful.
\begin{notation}
  Let $i\colon Z \inj X$ be a closed immersion of algebraic stacks, which is defined by the
  quasi-coherent ideal $I$. For each integer $n\geq 0$, let $i^{[n]} \colon Z^{[n]} \inj X$ be 
  the closed immersion defined by the quasi-coherent ideal $I^{n+1}$. Note that if $i$ is a 
  finitely presented closed immersion, then so too is $i^{[n]}$ for all $n\geq 0$.
\end{notation}
The following lemma will eventually be improved (see Corollary \ref{C:gluing_special}), but is for the meantime sufficient for our purposes.
\begin{lemma}\label{L:flatness_conc_cl}
  Consider a cartesian diagram of algebraic stacks:
  \[
  \xymatrix{V' \ar[r]^{v'} \ar[d]_{g_V} & W' \ar[d]^g\\V \ar[r]^v & W. }
  \]
  Assume that $v$ is $g$-flat and a closed immersion.
  \begin{enumerate}
  \item \label{LI:flatness_conc_cl:conc}  If $M \in 
    \QCOH_{g_v-\mathrm{fl}}(V)$, then $v_*M \in \QCOH_{g-\mathrm{fl}}(W)$.
  \item \label{LI:flatness_conc_cl:iso} If $g_V$ is 
    an isomorphism, then $V^{[n]}$ is $g$-flat and $g_{V^{[n]}}  \colon 
    W'\times_W V^{[n]} \to V^{[n]}$ is an isomorphism  for every 
    $n\geq 0$.
  \end{enumerate}
\end{lemma}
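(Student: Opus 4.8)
The plan is to work smooth-locally on $W$, reducing to the case where $W = \spec A$ is affine, $W' = \spec A'$, and $v$ corresponds to a surjection $A \to A/J$ with $A/J \otimes^{\LDERF}_A A' = A/J \otimes_A A'$ concentrated in degree zero (this is exactly the hypothesis that $v$ is $g$-flat, unwound via Lemma \ref{L:der_char_fflat}). For part \itemref{LI:flatness_conc_cl:conc}, let $M$ be an $A/J$-module that is tor-independent of $A/J \otimes_A A'$ over $A/J$; I want to show $M$, viewed as an $A$-module via restriction along $A \to A/J$, is tor-independent of $A'$ over $A$, i.e. $\Tor^A_i(A', M) = 0$ for $i > 0$. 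The key is the base-change (or transitivity) spectral sequence for the composite $A \to A/J$: one has
\[
  M \otimes^{\LDERF}_A A' \homotopic M \otimes^{\LDERF}_{A/J} \bigl(A/J \otimes^{\LDERF}_A A'\bigr),
\]
and since $v$ is $g$-flat the bracketed term is just the module $A/J \otimes_A A'$ placed in degree $0$; then the $g_v$-flatness of $M$ says $M \otimes^{\LDERF}_{A/J} (A/J \otimes_A A')$ has no higher cohomology, giving the claim. Globalizing is routine since all the sheaves and Tor's in Definition \ref{D:f-flat} are checked on smooth affine covers, and $v_* $ of a quasi-coherent sheaf on a closed substack is computed by restriction of scalars.

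For part \itemref{LI:flatness_conc_cl:iso}, first treat $V^{[0]} = V$: the hypothesis is precisely that $v$ is $g$-flat, which is the statement that $\Orb_V$ is $g$-flat, and $g_V = g_v$ being an isomorphism is assumed. For general $n$, I will induct on $n$ using the closed immersions $V^{[n-1]} \inj V^{[n]} \inj W$ and the conormal exact sequences
\[
  0 \to I^{n}/I^{n+1} \to \Orb_{V^{[n]}} \to \Orb_{V^{[n-1]}} \to 0
\]
of $\Orb_W$-modules, where $I$ is the ideal of $V$. Since $g_V$ is an isomorphism, $g$ restricted over $V$ is an isomorphism, so each quasi-coherent $\Orb_V$-module — in particular $I^{n}/I^{n+1}$, which is an $\Orb_V$-module — is $g$-flat with trivial base change (here I use part \itemref{LI:flatness_conc_cl:conc} applied with the closed immersion $V \inj W$: an $\Orb_V$-module pushed forward to $W$ is $g$-flat because it is $g_v$-flat over $V$, and $g_v$ being an isomorphism makes every module $g_v$-flat). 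Tensoring the conormal sequence with $A'$ over $A$ and using the long exact Tor sequence, $g$-flatness of the two outer terms forces $g$-flatness of $\Orb_{V^{[n]}}$; simultaneously $g_{V^{[n]}}$ being an isomorphism follows because $W' \times_W (-)$ applied to the conormal sequence stays short exact (no $\Tor_1$) and both outer maps $W' \times_W I^n/I^{n+1} \to I^n/I^{n+1}$ and $W' \times_W V^{[n-1]} \to V^{[n-1]}$ are isomorphisms, so the five lemma gives the middle one.

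The main obstacle is bookkeeping rather than depth: making sure the transitivity isomorphism for derived tensor products is applied with the correct rings and that the "$g$-flat" condition is stable under the restriction-of-scalars that underlies $v_*$, so that the spectral-sequence argument in \itemref{LI:flatness_conc_cl:conc} is legitimate globally on an algebraic stack and not merely on an affine. Everything else — the dévissage in \itemref{LI:flatness_conc_cl:iso}, the reduction to the affine case, the five-lemma step — is formal once \itemref{LI:flatness_conc_cl:conc} is in hand, and I expect the write-up to be short.
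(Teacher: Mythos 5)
Your proposal is correct and follows essentially the same route as the paper: reduce to the affine case, use associativity of derived tensor products (the affine form of the tor-independent base change the paper invokes) for part \itemref{LI:flatness_conc_cl:conc}, and induct on $n$ via the conormal sequence $0 \to I^n/I^{n+1} \to \Orb_{V^{[n]}} \to \Orb_{V^{[n-1]}} \to 0$ for part \itemref{LI:flatness_conc_cl:iso}. The only divergence is the final step of \itemref{LI:flatness_conc_cl:iso}, where you get that $g_{V^{[n]}}$ is an isomorphism by the five lemma on the map of short exact sequences (valid, since the bottom row stays exact by the inductive $\Tor_1$-vanishing and both outer comparison maps are isomorphisms), whereas the paper instead shows $g_{V^{[n]}}$ is a flat nil-immersion; both arguments work.
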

\begin{proof}
  Both claims are smooth local on $W$, so we may assume that $W=\spec A$ is an 
  affine scheme. Claim \itemref{LI:flatness_conc_cl:conc} follows from tor-independent 
  base change \cite[Cor.~4.13]{perfect_complexes_stacks}. Indeed, this provides 
  quasi-isomorphisms: 
  \[
  \LDERF \QCPBK{g}(v_*M) \simeq \LDERF \QCPBK{g}\RDERF v_*M \simeq \RDERF v'_*
  \LDERF\QCPBK{(g_V)}M \simeq v'_*g_V^*M \simeq g^*v_*M.
  \]  
  Claim \itemref{LI:flatness_conc_cl:iso} is essentially the local criterion for 
  flatness \cite[$0_{\mathrm{III}}$.10]{EGA}, but we will spell out the details. Assume that $V=\spec (A/I)$. 
  Fix an integer $n\geq 1$. By induction we may also assume 
  that $g_{V^{[n-1]}} \colon W'\times_W \spec (A/I^n) \to \spec (A/I^n)$ is an isomorphism
  and $A/I^n$ is $g$-flat. Now \itemref{LI:flatness_conc_cl:conc} applied to $V^{[n-1]} \to W$ implies that
  every 
  $A/I^n$-module is $g$-flat. In particular, $I^n/I^{n+1}$ is $g$-flat so the distinguished 
  triangle
  \[
  \LDERF\QCPBK{g}(I^n/I^{n+1}) \to \LDERF \QCPBK{g}(A/I^{n+1}) \to \LDERF \QCPBK{g}(A/I^{n}) \to   \LDERF\QCPBK{g}(I^n/I^{n+1})[1]
  \]
  now implies that $A/I^{n+1}$ is $g$-flat. Since $V'\cong V$ is affine, so is $V'^{[n]}$.
  The $0$th cohomology of the distinguished triangle above fits in the exact sequences
  \[
  \xymatrix@C-2mm{%
  0\ar[r] & I^n/I^{n+1}\ar[r]\ar[d]^{\cong} & A/I^{n+1}\ar[r]\ar[d] & A/I^n\ar[r]\ar[d]^{\cong} & 0 \\
  0\ar[r] & \Gamma(V',g^*I^n/I^{n+1})\ar[r] & \Gamma(V'^{[n]},\Orb_{V'^{[n]}})\ar[r] & \Gamma(V'^{[n-1]},\Orb_{V'^{[n-1]}})\ar[r] & 0}
  \]
  so $g_{V^{[n]}}$ is an isomorphism.
\end{proof}
Let $X$ be an algebraic stack and let $i\colon Z \inj X$ be a closed immersion with complement $j\colon U \to X$. Define
\[
\QCOH_{Z}(X) = \{M \in \QCOH(X) \suchthat j^*M \cong 0\}.
\]
Note that $\QCOH_{Z}(X)$ only depends on the closed subset $|Z| \subseteq |X|$.
\begin{lemma}\label{L:fp_thickenings}
  Let $X$ be a quasi-compact algebraic stack. Let $i\colon Z \inj X$ be a finitely 
  presented closed immersion.
  \begin{enumerate}
  \item \label{LI:fp_thickenings:ft} Let $M \in \QCOH_{Z}(X)$. If $M$ is of finite type, then there exists an $n\gg 0$ such that the natural map 
      $M \to i_*^{[n]}(i^{[n]})^*M$ is an isomorphism. 
  \item\label{LI:fp_thickenings:cl} If $W \subset X$ is a closed substack with $|W| \subseteq |Z|$, then $W \subseteq Z^{[n]}$ for some~${n\gg 0}$.
  \end{enumerate}
\end{lemma}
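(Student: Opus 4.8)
The plan is to establish \itemref{LI:fp_thickenings:ft} directly, by reducing to an affine smooth chart and running a short commutative-algebra argument, and then to deduce \itemref{LI:fp_thickenings:cl} formally by applying \itemref{LI:fp_thickenings:ft} to the structure sheaf of $W$.

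For \itemref{LI:fp_thickenings:ft}, I would first observe that, by the projection formula for the closed immersion $i^{[n]}$, the natural map $M \to i_*^{[n]}(i^{[n]})^*M$ is identified with the quotient $M \to M/I^{n+1}M$; hence it is an isomorphism exactly when $I^{n+1}M = 0$. Now the vanishing of the quasi-coherent sheaf $I^{n+1}M$ may be checked after pullback along a smooth surjection, and since $X$ is quasi-compact we may take such a surjection of the form $p \colon \spec A \to X$ with $A$ a ring. Let $I_A \subseteq A$ be the ideal pulling back $I$ --- finitely generated because $i$ is finitely presented --- and let $N$ be the finitely generated $A$-module corresponding to $p^*M$. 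Since $j^*M = 0$, the module $N$ is supported inside $V(I_A)$, so $V(\Ann_A N) = \supp N \subseteq V(I_A)$ and therefore $I_A \subseteq \sqrt{\Ann_A N}$. Writing $I_A = (a_1, \dots, a_k)$ and choosing $c$ with $a_\ell^{c} N = 0$ for every $\ell$, a pigeonhole count shows $I_A^{kc} N = 0$; hence $I^{kc}M$ pulls back to $0$ and so vanishes, and any $n \geq kc - 1$ works.

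For \itemref{LI:fp_thickenings:cl}, let $J \subseteq \Orb_X$ be the quasi-coherent ideal defining $W$ and consider $\Orb_X/J$. It is of finite type, being a quotient of $\Orb_X$, and its support is $|W| \subseteq |Z|$, so $j^*(\Orb_X/J) = 0$, i.e.\ $\Orb_X/J \in \QCOH_{Z}(X)$. Applying \itemref{LI:fp_thickenings:ft} produces an $n \gg 0$ with $I^{n+1}(\Orb_X/J) = 0$, which is precisely the assertion $I^{n+1} \subseteq J$, i.e.\ $W \subseteq Z^{[n]}$.

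The only delicate point is the reduction in \itemref{LI:fp_thickenings:ft}: quasi-compactness of $X$ is needed to secure a single affine (equivalently, finite) smooth cover, so that one exponent $n$ serves globally, and finite presentation of $i$ is exactly what makes $I_A$ finitely generated --- without these two inputs the uniform-exponent conclusion would fail. Everything else is routine.
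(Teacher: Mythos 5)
Your proof is correct and follows essentially the same route as the paper's: part \itemref{LI:fp_thickenings:ft} reduces to an affine chart where the finitely many generators of $I$ each kill the finitely generated module up to a uniform power. For \itemref{LI:fp_thickenings:cl} you apply \itemref{LI:fp_thickenings:ft} on $X$ to $\Orb_X/J$ rather than, as the paper does, on $W$ to $\Orb_W$ with respect to the surjective closed immersion $Z\times_X W\inj W$; this is a slightly more direct packaging of the same idea and is equally valid.
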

\begin{proof}
  For \itemref{LI:fp_thickenings:ft}: we may assume that $X=\spec A$ is an affine scheme 
  and $Z=\spec (A/I)$, where $I=(f_1,\dots,f_r)$ is a finitely generated ideal of $A$. By 
  assumption, $M_{f_i} = 0$ for each $i=1$, $\dots$, $r$. As $M$ is finitely generated, it 
  follows that there exists $n\gg 0$ such that $f_i^nM = 0$ for all $i=1$, $\dots$, $r$. The 
  claim follows.

  For \itemref{LI:fp_thickenings:cl}: let $W_0 = Z\times_X W$. Then  $W_0 \inj W$ is a 
  surjective and finitely 
  presented closed immersion. From \itemref{LI:fp_thickenings:ft}, it follows that $W 
  \subseteq W_0^{[n]}$ for some $n\gg 0$. But $W_0^{[n]} \subseteq Z^{[n]}$ and we have 
  the claim. 
\end{proof}
\section{Mayer--Vietoris squares}\label{S:MV}
In this section, we compare various notions of Mayer--Vietoris squares. 
\begin{lemma}\label{L:mv_bc}
  Fix a cartesian diagram as in \eqref{E:MV-square}.
  \begin{enumerate}
  \item \label{LI:mv_bc:w_f_mv} If the square is a weak (resp.~flat) Mayer--Vietoris 
    square, then it remains so after arbitrary base change on $X$.
  \item \label{LI:mv_bc:mv} If the square is a tor-independent Mayer--Vietoris square, then it 
    remains so after $f$-flat base change on $X$.
  \item \label{LI:mv_bc:verify}  The properties of being a flat, tor-independent, or weak 
    Mayer--Vietoris square are flat local on $X$.
  \end{enumerate}
\end{lemma}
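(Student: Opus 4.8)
The plan is to handle the three parts in order, reducing each to the behaviour of tor-independence and flatness under base change. For part \itemref{LI:mv_bc:w_f_mv}, fix a base change $T \to X$ and set $X'_T = X' \times_X T$, $U_T = U \times_X T$; since \eqref{E:MV-square} is cartesian, so is the base-changed square, and the open immersion $j$ pulls back to an open immersion $j_T \colon U_T \to T$. If $W \to T$ is any morphism with image disjoint from $U_T$, then its composite $W \to T \to X$ has image disjoint from $U$ (the image in $X$ lands in the complement of $U$, because a point of $W$ mapping into $U \subseteq X$ would, by the cartesian property, map into $U_T \subseteq T$). Hence $W' = X' \times_X W = X'_T \times_T W \to W$ is an isomorphism by the weak Mayer--Vietoris hypothesis, which is exactly the condition for the base-changed square to be weak. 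In the flat case, flatness of $f$ is preserved under base change, so the base-changed square is again flat.

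For part \itemref{LI:mv_bc:mv}, let $T \to X$ be $f$-flat. The base-changed square is cartesian with $j_T$ an open immersion as before, and it is weak by the argument just given (being $f$-flat, $T \to X$ is in particular a morphism, and the disjointness argument did not use flatness). It remains to check the tor-independence condition: given $W \to T$ with image disjoint from $U_T$, we must show $W$ is tor-independent of $f_T \colon X'_T \to T$. Using the derived characterisation (Lemma \ref{L:der_char_fflat}) or working smooth-locally so that everything is affine, tor-independence of $W \to T$ and $X'_T \to T$ over $T$ is equivalent, by associativity of derived tensor product and the tor-independence of $T \to X$ with $f$, to tor-independence of $W \to X$ with $f$ over $X$; and the latter holds because the composite $W \to T \to X$ has image disjoint from $U$ and the original square is a tor-independent Mayer--Vietoris square. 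Concretely, for a smooth affine chart $\spec A \to X$ with $\spec C \to \spec A \times_X T$ and then $\spec B \to \spec C \times_T W$ and $\spec A' \to \spec A \times_X X'$, the vanishing of $\Tor^C_i(C \tensor_A A', B)$ for $i > 0$ follows from $\Tor^A_i(A', C) = 0$ for $i>0$ (the $f$-flatness of $T \to X$) together with $\Tor^A_i(A', B) = 0$ for $i > 0$ (the tor-independence in the original square applied to $\spec B \to \spec A \times_X W \to X$) via the spectral sequence / change-of-rings argument; the first vanishing lets one rewrite $(C \tensor_A A') \tensor^{\LDERF}_C B \simeq A' \tensor^{\LDERF}_A B$.

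For part \itemref{LI:mv_bc:verify}, suppose $\{X_\alpha \to X\}$ is a flat covering (say by affine schemes, or any fppf/flat cover) and that each base-changed square over $X_\alpha$ is a flat (resp.~tor-independent, resp.~weak) Mayer--Vietoris square; we must deduce the property for the original square. That $j$ is an open immersion and the square is cartesian descend along the flat cover by standard descent. For the weak condition: given $W \to X$ with image disjoint from $U$, we want $f_W \colon W' \to W$ to be an isomorphism, and since $\{W_\alpha = W \times_X X_\alpha \to W\}$ is a flat cover and being an isomorphism is flat-local on the target, it suffices that each $f_{W_\alpha} \colon W'_\alpha \to W_\alpha$ be an isomorphism --- but $W_\alpha \to X_\alpha$ has image disjoint from $U_\alpha$, so this holds by hypothesis. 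Flatness of $f$ is likewise flat-local on $X$. For tor-independence: the Tor-vanishing condition is checked on smooth (in particular flat) charts, so it is automatically flat-local. The main obstacle, and the only point requiring real care, is the disjointness bookkeeping in parts \itemref{LI:mv_bc:mv} and \itemref{LI:mv_bc:verify} --- tracking that ``image disjoint from $U$'' is correctly inherited up and down the base-change maps --- together with the change-of-rings manipulation showing tor-independence composes; once these are set up, everything else is a formal consequence of the stability of cartesian squares, open immersions, flatness, and Tor-vanishing under base change and flat descent.
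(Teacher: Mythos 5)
Your proposal is correct and follows essentially the same route as the paper: part (1) is the same formal disjointness-of-image argument, part (2) reduces to the identical change-of-rings computation $(C\otimes_A A')\otimes^{\mathsf{L}}_C B\simeq A'\otimes^{\mathsf{L}}_A B$ using associativity of the derived tensor product together with the $f$-flatness of the base change and of the composite $W\to X$, and part (3) is flat descent (which the paper dismisses as immediate and you merely spell out).
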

\begin{proof}
  Claim \itemref{LI:mv_bc:w_f_mv} is trivial. For \itemref{LI:mv_bc:mv}: let
  $v\colon V \to X$ be $f$-flat and let $w\colon W \to V$ be such that
  the image of $w$ is disjoint from $v^{-1}(U)$. Then $v\circ w$ has image
  disjoint from $U$ so is $f$-flat. It follows that $w$ is $f_V$-flat where
  {$f_V \colon X'\times_X V \to V$}.
  Indeed, this is local on $W$, $V$, $X$ and $X'$, so we may assume that
  $X=\spec A$, $X'=\spec A'$, $V=\spec B$ and $W=\spec C$ and then
  \[
  C\tensor^{\LDERF}_B (B\tensor_A A') \simeq C\tensor^{\LDERF}_A A'
  \simeq C\tensor_A A' \simeq C\tensor_B (B\tensor_A A')
  \]
  since $v$ and $v\circ w$ are $f$-flat.
  The claim \itemref{LI:mv_bc:verify} is immediate from flat 
  descent.
\end{proof}
As the following Lemma shows, the conditions for Mayer--Vietoris squares are
much easier to check when a description of the complement is given.
\begin{lemma}\label{L:MV-open-qc}
  Fix a cartesian diagram as in \eqref{E:MV-square}. Suppose that $U$ is the complement 
  of a finitely presented closed immersion $i\colon Z \inj X$.
  \begin{enumerate}
  \item \label{LI:MV-open-qc:weak} If $f_{Z^{[n]}} \colon X'\times_X Z^{[n]} \to 
    Z^{[n]}$ is an isomorphism for all $n$, then the square is a weak 
    Mayer--Vietoris square.
  \item \label{LI:MV-open-qc:ti} If $f_{Z} \colon X'\times_X Z \to Z$ is an isomorphism and 
    $i$ and $f$ are tor-independent, then the square is a tor-independent Mayer--Vietoris 
    square. 
  \end{enumerate}
\end{lemma}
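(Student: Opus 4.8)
The plan is to reduce both claims to statements about morphisms $W \to X$ whose image is disjoint from $U$, i.e.\ with $|W| \subseteq |Z|$, and then to use the finite presentation of $i$ to ``approximate'' such $W$ by the thickenings $Z^{[n]}$. For \itemref{LI:MV-open-qc:weak}, let $w \colon W \to X$ be a morphism of algebraic stacks with image disjoint from $U$; I must show $f_W \colon W' = X' \times_X W \to W$ is an isomorphism. The assertion is smooth-local on $W$ (and on $X$), so I may assume $W = \spec B$ is an affine scheme, hence quasi-compact, and likewise reduce to $X = \spec A$ with $Z = \spec(A/I)$ for a finitely generated ideal $I$. Since $|W| \subseteq |Z|$ and $W$ is quasi-compact, Lemma \ref{L:fp_thickenings}\itemref{LI:fp_thickenings:cl} (applied after replacing $X$ by $\spec B$ and $Z$ by the closed subscheme $\spec(B/IB)$, or more directly by pulling back $Z$ to $W$) gives that $w$ factors through $Z^{[n]} \inj X$ for some $n \gg 0$. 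Then $f_W$ is the base change along $W \to Z^{[n]}$ of $f_{Z^{[n]}} \colon X' \times_X Z^{[n]} \to Z^{[n]}$, which is an isomorphism by hypothesis; hence $f_W$ is an isomorphism, and the square is a weak Mayer--Vietoris square.

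For \itemref{LI:MV-open-qc:ti}, I first note that the tor-independence hypotheses let me bootstrap from $Z = Z^{[0]}$ to all $Z^{[n]}$: since $f_Z$ is an isomorphism and $Z$ is $f$-flat, Lemma \ref{L:flatness_conc_cl}\itemref{LI:flatness_conc_cl:iso} (with $(V,W) = (Z,X)$, $g = f$) shows that $Z^{[n]}$ is $f$-flat and $f_{Z^{[n]}} \colon X' \times_X Z^{[n]} \to Z^{[n]}$ is an isomorphism for every $n \geq 0$. In particular the hypothesis of \itemref{LI:MV-open-qc:weak} holds, so the square is already a weak Mayer--Vietoris square. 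It remains to upgrade ``isomorphism'' to ``tor-independent'': given $w \colon W \to X$ with image disjoint from $U$, I must show $W$ and $f$ are tor-independent. Again the claim is smooth-local on $W$ and on $X$, so reduce to $W = \spec B$, $X = \spec A$, $Z = \spec(A/I)$ with $I$ finitely generated, and $X' = \spec A'$; I must show $\Tor^A_i(A', B) = 0$ for $i > 0$. By Lemma \ref{L:fp_thickenings}\itemref{LI:fp_thickenings:cl} the map $\spec B \to \spec A$ factors through $Z^{[n]} = \spec(A/I^{n+1})$ for some $n$, so $B$ is an $A/I^{n+1}$-module. Since $A/I^{n+1}$ is $f$-flat — shown above — Lemma \ref{L:flatness_conc_cl}\itemref{LI:flatness_conc_cl:iso} together with (the proof of) Lemma \ref{L:flatness_conc_cl}\itemref{LI:flatness_conc_cl:conc} gives that \emph{every} $A/I^{n+1}$-module, in particular $B$, is $f$-flat; that is, $A' \otimes_A^{\LDERF} B \simeq A' \otimes_A B$, which is exactly the required vanishing of higher $\Tor$. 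Hence $W$ and $f$ are tor-independent, and the square is a tor-independent Mayer--Vietoris square.

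The only real content is the factorization through some thickening $Z^{[n]}$, which is precisely Lemma \ref{L:fp_thickenings}\itemref{LI:fp_thickenings:cl} and rests on the finite presentation of $i$; everything else is a formal combination of already-established reductions (smooth-locality, base change) and of the flatness propagation in Lemma \ref{L:flatness_conc_cl}. I therefore expect the main point to be bookkeeping: making sure the reduction to the affine case is compatible on all of $W$, $X$, $X'$ simultaneously, and that when I replace $X$ by $\spec B$ I am comparing the pullback of $Z$ to $W$ (a finitely presented closed immersion, as noted in the Notation preceding Lemma \ref{L:flatness_conc_cl}) with the closed subschemes $Z^{[n]}$. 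No step should present a genuine obstacle; in particular $|Z^{[n]}| = |Z|$ for all $n$, so ``image disjoint from $U$'' is insensitive to which thickening one uses.
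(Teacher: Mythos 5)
Your proof is correct and follows essentially the same route as the paper: reduce to $X$ and $W$ affine, use the finite presentation of $i$ (via Lemma \ref{L:fp_thickenings}\itemref{LI:fp_thickenings:cl}) to factor $W\to X$ through some $Z^{[n]}$, and then invoke Lemma \ref{L:flatness_conc_cl}\itemref{LI:flatness_conc_cl:iso} for the tor-independent case. The only cosmetic difference is that the paper obtains the factorization by applying Lemma \ref{L:fp_thickenings}\itemref{LI:fp_thickenings:cl} to the schematic image of the (now affine) morphism $g\colon W\to X$, whereas you apply it to $W$ over its own pulled-back copy of $Z$; both are fine.
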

\begin{proof}
  We may assume that $X$ is affine (Lemma \ref{L:mv_bc}\itemref{LI:mv_bc:verify}). 
  Let $g\colon W \to X$ be a morphism of algebraic stacks with 
  image disjoint from $U$. We must prove that $f_W \colon X'\times_X W \to W$ is an 
  isomorphism and for \itemref{LI:flatness_conc_cl:iso} also that $f$ and $g$
  are tor-independent. These claims are smooth local on $W$, so we may also assume that $W$ is affine. 
  The morphism $g$ is now affine, so its schematic image $V$ exists and is disjoint from 
  $U$. In particular, $|V| \subseteq |Z|$. By Lemma 
  \ref{L:fp_thickenings}\itemref{LI:fp_thickenings:cl}, $V \subseteq Z^{[n]}$ for some $n\gg 
  0$. Hence, $W \to X$ factors through $Z^{[n]}$ for some $n\gg 0$. The claim 
  \itemref{LI:MV-open-qc:weak} is now immediate.  For \itemref{LI:MV-open-qc:ti}, the 
  result follows from Lemma \ref{L:flatness_conc_cl}\itemref{LI:flatness_conc_cl:iso}.
\end{proof}
Note that if $X$ is quasi-compact and quasi-separated and $j$ is quasi-compact, then $i\colon Z \inj X$ as in Lemma \ref{L:MV-open-qc} always exists \cite[Prop.~8.2]{rydh-2014}.

The following lemma connects the various types of Mayer--Vietoris squares to each other.
\begin{lemma}\label{L:MV-different-notions}
  Fix a square as in \eqref{E:MV-square}. Consider the following 
  conditions. 
  \begin{enumerate}
  \item \label{LI:MV-different-notions:fmv} The square is a flat Mayer--Vietoris 
    square.
  \item \label{LI:MV-different-notions:mv_fz} The square is a weak Mayer--Vietoris square and $f$ is flat at every point of $Z'$.
  \item \label{LI:MV-different-notions:mv} The square is a tor-independent 
    Mayer--Vietoris square.
  \item \label{LI:MV-different-notions:wmv} The square is a weak Mayer--Vietoris square.
  \end{enumerate}
  Then 
  \itemref{LI:MV-different-notions:fmv}$\implies$\itemref{LI:MV-different-notions:mv_fz}$\implies$\itemref{LI:MV-different-notions:mv}$\implies$\itemref{LI:MV-different-notions:wmv}.
  If $X$ and $X'$ are locally noetherian, then~\itemref{LI:MV-different-notions:wmv}$\implies$\itemref{LI:MV-different-notions:mv_fz}. If there exists a Cartier divisor 
    $i\colon Z \inj X$ with complement $U$ such that $f^{-1}(Z)\to Z$ is an isomorphism
    and $f^{-1}(Z) \inj X'$ is 
    also a Cartier divisor, then~\itemref{LI:MV-different-notions:mv} holds.
  \end{lemma}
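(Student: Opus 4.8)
The plan is to establish the four implications in order and then the two supplementary statements, in each case working smooth-locally on $X$ and $X'$ to reduce to commutative algebra. The implications \itemref{LI:MV-different-notions:fmv}$\implies$\itemref{LI:MV-different-notions:mv_fz} and \itemref{LI:MV-different-notions:mv}$\implies$\itemref{LI:MV-different-notions:wmv} are trivial (a flat Mayer--Vietoris square is a weak one with $f$ flat everywhere, hence flat at the points of $Z'$; a tor-independent Mayer--Vietoris square is weak by definition). For \itemref{LI:MV-different-notions:mv_fz}$\implies$\itemref{LI:MV-different-notions:mv} the square is already weak, so I only need that every $g\colon W\to X$ with image disjoint from $U$ is tor-independent of $f$. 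As this is smooth-local on $W$, $X$, $X'$ (Definition~\ref{D:f-flat}), I reduce to $X=\spec A$, $X'=\spec A'$, $W=\spec B$ with $Z=V(I)$, the condition on the image becoming $\supp_A B\subseteq V(I)$, and I must show $\Tor^A_i(A',B)=0$ for $i>0$. I would verify this after localizing the $A'$-module $\Tor^A_i(A',B)$ at an arbitrary prime $\mathfrak{q}$ of $A'$: writing $\mathfrak{p}=\mathfrak{q}\cap A$ one has $\Tor^A_i(A',B)_{\mathfrak{q}}\cong\Tor^{A_{\mathfrak{p}}}_i(A'_{\mathfrak{q}},B_{\mathfrak{p}})$; if $I\not\subseteq\mathfrak{p}$ then $B_{\mathfrak{p}}=0$, while if $I\subseteq\mathfrak{p}$ then $IA'\subseteq\mathfrak{q}$, so $\mathfrak{q}$ is a point of $Z'$ at which $f$ is flat by hypothesis. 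Either way the group vanishes for $i>0$, hence so does $\Tor^A_i(A',B)$.

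For the locally noetherian converse \itemref{LI:MV-different-notions:wmv}$\implies$\itemref{LI:MV-different-notions:mv_fz}, fix a weak Mayer--Vietoris square with $X$, $X'$ locally noetherian; I must show $f$ is flat at each $\mathfrak{q}\in Z'$. Since $|Z^{[n]}|=|Z|$ is disjoint from $U$, every $f_{Z^{[n]}}$ is an isomorphism, so working smooth-locally I reduce to $X=\spec A$, $X'=\spec A'$ noetherian with $A'/I^{n+1}A'\cong A/I^{n+1}$ compatibly with $A\to A'$ for all $n$. Writing $\mathfrak{p}=\mathfrak{q}\cap A$, $R=A_{\mathfrak{p}}$, $S=A'_{\mathfrak{q}}$, these isomorphisms localize to $S/I^nS\cong R/I^nR$ for all $n\geq 1$, which is in particular flat over $R/I^nR$. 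As $S$ is a finite module over the noetherian ring $S$, the local criterion for flatness \cite[$0_{\mathrm{III}}$.10.2]{EGA} applies and shows that $S$ is flat over $R$; that is, $f$ is flat at $\mathfrak{q}$. (Together with the first part, this shows \itemref{LI:MV-different-notions:mv_fz}, \itemref{LI:MV-different-notions:mv} and \itemref{LI:MV-different-notions:wmv} are equivalent in the locally noetherian case.)

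Finally, the Cartier statement will follow from Lemma~\ref{L:MV-open-qc}\itemref{LI:MV-open-qc:ti}: a Cartier divisor $i\colon Z\inj X$ is a finitely presented closed immersion, the isomorphism $f_Z\colon f^{-1}(Z)\to Z$ is assumed, so it remains only to see that $Z$ and $f$ are tor-independent. This being smooth-local, I take $X=\spec A$, $X'=\spec A'$ with $Z=V(t)$ for a non-zero-divisor $t\in A$; the hypothesis that $f^{-1}(Z)=V(tA')$ is a Cartier divisor on $X'$ forces, after a further localization, the image of $t$ in $A'$ to be a non-zero-divisor, whereupon the resolution $0\to A\xrightarrow{\,t\,}A\to A/tA\to 0$ gives $\Tor^A_i(A',A/tA)=0$ for all $i>0$, as required. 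The only step with any real content is the appeal to the local criterion for flatness in the noetherian converse; everything else is either formal or reduces, via the results of Section~\ref{S:prelim} and Lemma~\ref{L:MV-open-qc}, to the prime-by-prime localization above or to the Koszul resolution of a regular element, and I do not expect any essential difficulty there.
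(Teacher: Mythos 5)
Your proposal is correct and follows essentially the same route as the paper: the chain of implications is handled by the definitions plus a localization-of-Tor argument (which the paper declares obvious), the locally noetherian converse is exactly the reduction to the local criterion of flatness \cite[$0_{\mathrm{III}}$.10.2]{EGA} via the isomorphisms on infinitesimal neighbourhoods $Z^{[n]}$, and the Cartier-divisor case is the same appeal to Lemma~\ref{L:MV-open-qc}\itemref{LI:MV-open-qc:ti} together with the two-term resolution of $\Orb_Z$ by a regular element. You merely spell out details the paper leaves implicit.
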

  \begin{proof}
    That
    \itemref{LI:MV-different-notions:fmv}$\implies$\itemref{LI:MV-different-notions:mv_fz}$\implies$\itemref{LI:MV-different-notions:mv}$\implies$\itemref{LI:MV-different-notions:wmv}
    is obvious. If $X$ and $X'$ are locally noetherian,
    then~\itemref{LI:MV-different-notions:mv_fz} follows from the local
    criterion of flatness~\cite[0$_{\mathrm{III}}$.10.2.1--2]{EGA} (the
    conditions are flat-local on $X$ and $X'$ so reduces to schemes).

    For the last claim, it is sufficient 
    to prove that $\Orb_Z$ is $f$-flat (Lemma \ref{L:MV-open-qc}\itemref{LI:MV-open-qc:ti}), which is local on $X$. So we may assume that 
    $Z=V(s)$ and obtain an exact sequence
    \[
    0\to \Orb_X\xrightarrow{s\cdot} \Orb_X\to \Orb_Z\to 0.
    \]
    Applying $\LDERF \QCPBK{f}$ to this, we obtain a distinguished triangle in $\DQCOH(X')$:
    \[
    \Orb_{X'} \to \Orb_{X'} \to \LDERF \QCPBK{f}\Orb_Z \to \Orb_{X'}[1].
    \]
    The resulting long exact sequence of cohomology yields:
    \[
    0\to \COHO{-1}(\LDERF\QCPBK{f}\Orb_Z) \to \Orb_{X'}\xrightarrow{s\cdot} \Orb_{X'}\to f^*\Orb_{Z}\to 0,
    \]
    with all other terms $0$. Since $f^{-1}(Z)\inj X'$ is a Cartier divisor, $s$ is regular on 
$\Orb_{X'}$ and so $\COHO{-1}(\LDERF\QCPBK{f}\Orb_Z)=0$. 
Hence, $\LDERF \QCPBK{f}\Orb_Z \to f^*\Orb_Z$ is a quasi-isomorphism and the 
square is a tor-independent Mayer--Vietoris square.
  \end{proof}
As the following lemma shows, blowing up provides a natural way to move from the weak Mayer--Vietoris setting to the tor-independent setting.
  \begin{lemma}\label{L:blow-up-is-MV-square}
    Fix a weak Mayer--Vietoris square as in \eqref{E:MV-square}. If there is a finitely presented closed immersion $i\colon Z \inj X$ with complement $U$, then
  \[
  \vcenter{\xymatrix{
      U'\ar[r]^-{j'}\ar[d]_{f_U} & \Bl_{Z'}X'\ar[d]^{\tilde{f}} \\
      U\ar[r]^-{j} & \Bl_Z X\ar@{}[ul]|\square
    }}
  \] 
  is a tor-independent  Mayer--Vietoris square.
  \end{lemma}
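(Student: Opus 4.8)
The plan is to write $\tilde X=\Bl_Z X$, to set $Z'=X'\times_X Z$ (a finitely presented closed immersion since $i$ is), and $\tilde X'=\Bl_{Z'}X'$, to construct a comparison morphism $\tilde f\colon\tilde X'\to\tilde X$, and then to recognize the square as an instance of the last assertion of Lemma~\ref{L:MV-different-notions}. Since $Z\times_X U=\emptyset$ and blowing up commutes with the flat base change $U\to X$, the morphism $\tilde X\to X$ restricts to an isomorphism over $U$; likewise $\tilde X'\to X'$ restricts to an isomorphism over $U'$. Hence $U\inj\tilde X$ and $U'\inj\tilde X'$ are open immersions whose complements are the exceptional divisors $E\inj\tilde X$ and $E'\inj\tilde X'$, each a Cartier divisor with invertible ideal $\mathcal I_Z\Orb_{\tilde X}$, respectively $\mathcal I_{Z'}\Orb_{\tilde X'}$.

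Next, the inverse image ideal of $\mathcal I_Z$ under $\tilde X'\to X'\xrightarrow{f} X$ is $\mathcal I_Z\Orb_{\tilde X'}=\mathcal I_{Z'}\Orb_{\tilde X'}$, which is invertible; so the universal property of the blow-up produces a unique morphism $\tilde f\colon\tilde X'\to\tilde X$ over $X$, and $\tilde f^{-1}(E)=E'$, because the pullback of $\mathcal I_Z\Orb_{\tilde X}$ along $\tilde f$ is the exceptional ideal $\mathcal I_{Z'}\Orb_{\tilde X'}$ of $\tilde X'$. In particular $\tilde f^{-1}(U)=\tilde X'\setminus E'=U'$, so the square with vertices $U'$, $\tilde X'$, $U$, $\tilde X$ is cartesian, with top map $j'$ and left map $f_U$. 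Applying the last assertion of Lemma~\ref{L:MV-different-notions} with $E\inj\tilde X$ in the role of the Cartier divisor (its complement is $U$, and $\tilde f^{-1}(E)=E'\inj\tilde X'$ is again a Cartier divisor), it remains only to show that the restriction $\tilde f_E\colon E'=\tilde f^{-1}(E)\to E$ is an isomorphism.

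This is smooth-local on $X$, so one may assume $X=\spec A$ and $Z=V(I)$ with $I$ finitely generated; then $\tilde X=\mathrm{Proj}\big(\bigoplus_{n\ge 0}I^n\big)$, $E=\mathrm{Proj}\big(\bigoplus_{n\ge 0}I^n/I^{n+1}\big)$ over $Z$, similarly $E'=\mathrm{Proj}\big(\bigoplus_{n\ge 0}\mathcal I_{Z'}^n/\mathcal I_{Z'}^{n+1}\big)$ over $Z'$, and $\tilde f_E$ is the morphism of $\mathrm{Proj}$'s induced by the evident graded homomorphism. Since \eqref{E:MV-square} is a weak Mayer--Vietoris square and each $Z^{[n]}\to X$ has image $|Z|$ disjoint from $U$, the map $f_{Z^{[n]}}\colon X'\times_X Z^{[n]}\to Z^{[n]}$ is an isomorphism for every $n\ge 0$. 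Comparing, for each $n\ge 1$, the short exact sequence $0\to I^n/I^{n+1}\to A/I^{n+1}\to A/I^n\to 0$ on $Z^{[n]}$ with its counterpart $0\to \mathcal I_{Z'}^n/\mathcal I_{Z'}^{n+1}\to \Orb_{X'\times_X Z^{[n]}}\to \Orb_{X'\times_X Z^{[n-1]}}\to 0$ on $X'\times_X Z^{[n]}$, whose outer sheaves are matched by the isomorphisms $f_{Z^{[n]}}$ and $f_{Z^{[n-1]}}$, the five lemma shows the induced map $I^n/I^{n+1}\to\mathcal I_{Z'}^n/\mathcal I_{Z'}^{n+1}$ is an isomorphism. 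Together with $f_Z\colon Z'\xrightarrow{\sim}Z$ in degree $0$, these assemble into an isomorphism of graded algebras identifying $\bigoplus_n\mathcal I_{Z'}^n/\mathcal I_{Z'}^{n+1}$ with the pullback along $f_Z$ of $\bigoplus_n I^n/I^{n+1}$; taking $\mathrm{Proj}$ gives $E'\cong E\times_Z Z'\xrightarrow{\sim}E$, and this isomorphism is precisely $\tilde f_E$.

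The routine parts are the construction of $\tilde f$, the verification that the square is cartesian, and the Cartier-divisor bookkeeping; the only real content is the final step. Its point is that being a weak Mayer--Vietoris square forces $f$ to match \emph{all} the infinitesimal neighborhoods $Z^{[n]}$ of $Z$, hence to match the conormal algebras, whose $\mathrm{Proj}$'s are the exceptional divisors. I expect this to be the main obstacle; in particular one must check that the degree-$n$ isomorphisms above are compatible with multiplication, which follows from the compatibility of the isomorphisms $f_{Z^{[n]}}$ with the closed immersions $Z^{[m]}\inj Z^{[n]}$ for $m\le n$.
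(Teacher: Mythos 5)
Your proof is correct and follows essentially the same route as the paper: reduce to the Cartier-divisor criterion in the last assertion of Lemma~\ref{L:MV-different-notions} and verify that $E'\to E$ is an isomorphism by identifying the graded algebras $\bigoplus_k I^k/I^{k+1}$ and $\bigoplus_k I'^k/I'^{k+1}$, using that the weak Mayer--Vietoris property forces $\Orb_X/I^m\to\Orb_{X'}/I'^m$ to be an isomorphism for every $m$. The paper states this identification of Rees graded rings in one line where you spell out the five-lemma argument on the conormal pieces; the extra detail (and the explicit construction of $\tilde f$ and the cartesianness check) is harmless.
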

  \begin{proof}
    Since the exceptional divisors $E\inj \Bl_Z X$ and $E'\inj \Bl_{Z'} X'$ are
    Cartier divisors it is enough to verify that $E'\to E$ is an isomorphism
    (Lemma \ref{L:MV-different-notions}). Let $I$ be the ideal defining $Z\inj X$ and $I'$
    the ideal defining $Z'\inj X'$. Then the inverse images of $Z$ in the
    two blow-ups are
    \[
    E=\mathrm{Proj}_X(\oplus_{k\geq 0} I^k/I^{k+1})\quad\text{and}\quad
    E'=\mathrm{Proj}_{X'}(\oplus_{k\geq 0} I'^k/I'^{k+1}).
    \]
    Since $I'=I\Orb_{X'}$ and $\Orb_X/I^m\to \Orb_{X'}/I'^m$ is an isomorphism
    for every $m$, these two graded rings are isomorphic
    $\Orb_X/I=\Orb_{X'}/I'$-algebras. The result follows.
  \end{proof}
  The following lemma is a key observation of Moret-Bailly and will be essential to the article.
  \begin{lemma}[{\cite[Cor.~2.5.1]{MR1432058}}]\label{L:sections-give-MV}
    Fix a weak Mayer--Vietoris square as in \eqref{E:MV-square}.
    If $f$ admits a section $s\colon X\to X'$, then 
    \[
    \vcenter{\xymatrix{
      U\ar[r]^{j}\ar[d]_{s_U} & X \ar[d]^s\\
      U'\ar[r]^{j'} & X'\ar@{}[ul]|\square
    }}
    \]
    is a weak Mayer--Vietoris square. Moreover, if the square \eqref{E:MV-square} is a 
    tor-independent Mayer--Vietoris square, then so too is the one above.
  \end{lemma}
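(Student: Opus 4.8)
The claim has two parts: (i) given a weak Mayer--Vietoris square \eqref{E:MV-square} and a section $s\colon X\to X'$ of $f$, the "swapped" square with $s$ on the vertical maps is again a weak Mayer--Vietoris square; and (ii) if the original square is tor-independent, so is the swapped one. First I would check the swapped square is cartesian: one needs $U \cong U' \times_{X'} X$, i.e. the outer square with vertices $U, X, U', X'$ is a fiber product. Since $j' \colon U' \to X'$ is the pullback of the open immersion $j$ along $f$, and $s$ is a section of $f$, the fiber $U' \times_{X'} X$ (via $s$) equals $U \times_X X = U$ because $s^{-1}(U') = s^{-1}(f^{-1}(U)) = (f\circ s)^{-1}(U) = U$; moreover $j' \colon U' \to X'$ restricted over $s$ is exactly $j$ (the section $s$ maps $U$ into $U'$ compatibly, giving $s_U \colon U \to U'$). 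So the swapped square is cartesian with the correct open immersion $j$ along the bottom and $j'$ along the top; note $s_U$ is a section of $f_U$.

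**Part (i): weak MV.** Fix $g\colon W \to X'$ with image disjoint from $U'$; I must show the induced map $W \times_{X'} X \to W$ (base change of $s$) is an isomorphism. The key trick, following Moret-Bailly, is to compare with $f$: consider $f\circ g \colon W \to X$. Its image is disjoint from $U$: indeed $(f\circ g)^{-1}(U) = g^{-1}(f^{-1}(U)) = g^{-1}(U')$, which is empty since the image of $g$ is disjoint from $U'$. Since \eqref{E:MV-square} is a weak MV square, $f_W \colon X' \times_X W \to W$ is an isomorphism, where here $W$ maps to $X$ via $f\circ g$. But $X' \times_X W$ computed along $f\circ g$ receives the section $g$ of $f_W$ (because $g$ itself lies over $X'$), and since $f_W$ is an isomorphism with section $g$, the other projection $X' \times_X W \to X'$ composed with $f_W^{-1}$ identifies $W$ with a closed-and-open piece... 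More cleanly: the base change of $s\colon X \to X'$ along $g$ is $W \times_{X'} X \to W$, and using the section $g$ one has a cartesian identification $W \times_{X'} X \cong W \times_{X'} X' \times_X X$; since $f_W$ (the base change of $f$ along $f g$) is an isomorphism and $g$ is a section of it, base-changing the isomorphism $f_W$ along $g$ shows $W \times_{X'} X \to W$ is an isomorphism. I would write this out diagrammatically: stack two cartesian squares, the lower being \eqref{E:MV-square} pulled back along $fg$ (giving the iso $f_W$), the upper being the section $g$; the composite cartesian rectangle has $W \times_{X'} X \to W$ as one edge and factors through the iso.

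**Part (ii): tor-independence.** Suppose \eqref{E:MV-square} is tor-independent and let $g\colon W\to X'$ have image disjoint from $U'$; I must show $g$ is tor-independent of $s\colon X\to X'$, i.e. $\LDERF\QCPBK{s}(\Orb_W\text{-pushed...})$ behaves correctly — concretely, that $\Orb_X \ltensor_{\Orb_{X'}} g_*\Orb_W \simeq $ (non-derived). This is smooth-local, so reduce to $X'=\spec A'$, $X = \spec A$ (via $s^\sharp\colon A' \to A$), $W = \spec B$ with $A' \to B$. We know $A$ and $B$ are tor-independent over $A'$ is what we want; we are given $A$ and $B'$ (anything over $X$) are tor-independent over $A$ with respect to $f$... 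Rather: we are given that $g' \colon W' = W \times_{X'} X \to X$ and $f$ are tor-independent over $X$; but by part (i), $W' \to W$ is an iso, so $W' = W$ and $g' = f\circ g$. Now compute $\Orb_X \ltensor_{\Orb_{X'}} \Orb_W$: write it as $A \ltensor_{A'} B$. Factor through $A' \to A$: since $s$ is a section, $A \otimes_{A'} A \cong A$ (the diagonal of $s$ — $s$ is a monomorphism as it has a retraction $f$), hmm, more useful is $A \ltensor_{A'} B \simeq A \ltensor_{A'} (A' \ltensor ...) $. Use the tor-independence of $f$ and $g' = fg$: $\Orb_X \ltensor_{\Orb_X} (g'_* \Orb_W)$ — trivial — instead I would argue: $B$ is an $A$-algebra via $fg$, and tor-independence of $W'=W$ with $f$ says $A \ltensor_{A'} B \simeq A\otimes_{A'} B$ when we further tensor appropriately. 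The cleanest route: $A \ltensor_{A'} B \simeq A \ltensor_{A'} (A \ltensor_A B)$? No. I would instead directly invoke Lemma~\ref{L:flatness_conc_cl}-style reasoning or simply: since $W$ (disjoint from $U'$) factors through every infinitesimal neighborhood $Z'^{[n]}$ of $Z' = f^{-1}(Z)$ (by Lemma~\ref{L:fp_thickenings}, after reducing to the quasi-compact case and choosing $i\colon Z\inj X$ finitely presented — which exists after the reduction, cf. the remark after Lemma~\ref{L:MV-open-qc}), and on those neighborhoods $f$ and $s$ agree (as $f_{Z^{[n]}}$ is an isomorphism with section $s_{Z^{[n]}}$, both being inverse to each other), tor-independence with $s$ reduces to tor-independence with $f$, which is the hypothesis.

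**Main obstacle.** The diagrammatic part (i) is essentially formal once the swapped square is seen to be cartesian, so the genuine work is part (ii): making precise that "tor-independent with $f$" transfers to "tor-independent with $s$." The subtlety is that $s$ and $f$ go in opposite directions, so one cannot literally substitute; the bridge is that any $W$ with image in the complement's closed part factors through the thickenings $Z'^{[n]}$, and on each thickening the section $s$ and the (isomorphism) $f$ are mutually inverse — but one must handle the passage to the limit over $n$ and the reduction to the finitely-presented closed immersion case carefully, or else argue directly with the derived tensor product using that $\LDERF\QCPBK{f}\Orb_Z \simeq f^*\Orb_Z$ and chasing $\LDERF\QCPBK{s}$ through. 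I expect citing \cite[Cor.~2.5.1]{MR1432058} handles the affine case and the globalization is routine smooth descent via Lemma~\ref{L:mv_bc}.
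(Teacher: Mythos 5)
Your part (i) is correct and is essentially the paper's argument: compose with $f$, note that $f\circ g$ has image disjoint from $U$, use the weak Mayer--Vietoris property to see that $X'\times_X W\to W$ is an isomorphism (so that $g$ is a section of an isomorphism), and conclude by the two-out-of-three property for cartesian squares, the composite rectangle having $f\circ s=\mathrm{id}_X$ on the left. Your preliminary check that the swapped square is cartesian is also fine.

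Part (ii) contains a genuine gap, and in fact you abandon the correct computation one line before it closes. The route you settle on --- factor $W$ through the thickenings $Z'^{[n]}$ and argue that ``$f$ and $s$ agree there'' --- fails for two reasons. First, the lemma does not assume $j$ quasi-compact, so a finitely presented complement $Z$ need not exist even after localizing on $X$, Lemma \ref{L:fp_thickenings} is unavailable, and $W$ need not factor through any thickening. Second, even granting such a factorization, tor-independence of $W$ and $s$ concerns $\Orb_X\ltensor_{\Orb_{X'}}\Orb_W$; writing $A\ltensor_{A'}B\simeq(A\ltensor_{A'}A'/J)\ltensor_{A'/J}B$ with $J$ the ideal of $Z'^{[n]}$ only reduces you to proving that $Z'^{[n]}$ and $s$ are tor-independent, which is a statement of exactly the same kind --- the fact that $f$ and $s$ induce mutually inverse isomorphisms on the thickenings says nothing about the higher Tor's of $A$ over $A'$. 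The paper's proof is the cancellation you discarded. Locally, with $A=\Orb_X$, $A'=\Orb_{X'}$, $B=\Orb_{W}$: tor-independence of $f\circ w'$ with $f$, combined with the weak Mayer--Vietoris isomorphism $X'\times_X W\cong W$, gives $A'\ltensor_A B\simeq A'\tensor_A B\cong B$ as $A'$-modules, whence
\[
A\ltensor_{A'}B\;\simeq\;A\ltensor_{A'}\bigl(A'\ltensor_A B\bigr)\;\simeq\;\bigl(A\ltensor_{A'}A'\bigr)\ltensor_A B\;\simeq\;A\ltensor_A B\;\simeq\;B,
\]
the last identifications using $f\circ s=\mathrm{id}$ (equivalently, the trivial tor-independence of $f\circ w'$ with $f\circ s$). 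This is precisely the line you wrote as ``$A\ltensor_{A'}(A\ltensor_A B)$? No.'' --- the inner factor should be $A'\ltensor_A B$, not $A\ltensor_A B$, and then the argument works. Finally, deferring to \cite[Cor.~2.5.1]{MR1432058} for the affine case is not adequate here, since the lemma is stated and used for arbitrary algebraic stacks without affineness hypotheses on $f$.
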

  \begin{proof}
    Let $w'\colon W' \to X'$ be a morphism with image disjoint from $U'$. It follows that the 
    composition $W' \xrightarrow{w'} X' \xrightarrow{f} X$ has image disjoint from $U$ 
    and so $W'\times_{X} X' \to 
    W'$ is an isomorphism. In particular, the following diagram is cartesian:
    \[
    \xymatrix{X \ar[d]_s & \ar[l]_{f\circ w'} W' \ar@{=}[d]\\X' \ar[d]_f & \ar[l]_{w'} W' \ar@{=}[d]\\ X & \ar[l]_{f\circ w'} W' }
    \]
    and so $W'\times_{X'} X \to W'$ is an isomorphism 
    as required. If the square \eqref{E:MV-square} is a tor-independent Mayer--Vietoris
    square, then the lower square in the diagram above is tor-independent.
    Since the whole square is tor-independent, it follows that the upper
    square is tor-independent and the last claim follows.
  \end{proof}
  \begin{example}[{\cite[Prop.~2.5.2]{MR1432058}}]\label{E:diagonal-square-is-MV}
    Fix a weak Mayer--Vietoris square as in \eqref{E:MV-square}. Then
     \[
    \xymatrix@C+5mm{
      U' \ar[r]^{j'}\ar[d]_{\Delta_{f_U}} & X' \ar[d]^{\Delta_f}\\
      U'\times_U U' \ar[r]^{j'\times j'} & X'\times_X X'\ar@{}[ul]|\square
    }
    \]
    is a weak Mayer--Vietoris square. Indeed, we can base change the square 
    \eqref{E:MV-square} by $X' \to X$ and the resulting square is still weak
    (Lemma \ref{L:mv_bc}). Taking the diagonal section to the 
    projection $X'\times_X X' \to X'$ and using Lemma~\ref{L:sections-give-MV} gives the 
    claim. If the square \eqref{E:MV-square} is a tor-independent
    Mayer--Vietoris square and $f$ is $f$-flat (e.g., 
    flat), then the square above is a tor-independent Mayer--Vietoris square. This claim 
    follows from the same argument.
  \end{example}
  In the next Proposition, we show that general Mayer--Vietoris squares can smooth-locally 
  be dominated by much simpler ones. 
  \begin{proposition}\label{P:qaff_dom}
    Fix a weak Mayer--Vietoris square as in \eqref{E:MV-square}. Then 
    smooth-locally on $X$, there is an \'etale neighborhood $p\colon X''\to X'$
    of $Z'$
    such that the composition $f\circ p\colon X'' \to X$ is quasi-affine. 
  \end{proposition}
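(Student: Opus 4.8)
The plan is to reduce to the affine case and then, using the weak Mayer--Vietoris hypothesis, to deduce by an \'etale d\'evissage that $f$ is quasi-affine after passing to a suitable \'etale neighborhood of $Z'$. First, the assertion is local on $X$ for the smooth topology, and all of the relevant data — the closed immersion $i$, the stack $Z'$, the morphisms $f_W$ — are compatible with base change on $X$ (Lemma~\ref{L:mv_bc}), so we may assume $X=\spec A$ is affine; write $Z=V(I)$. Applying the defining property of a weak Mayer--Vietoris square to $W=Z$ shows that $f_Z\colon Z'\to Z$ is an isomorphism; hence $Z'$ is an affine scheme, $i'\colon Z'\inj X'$ (the base change of $i$) is a closed immersion, and $f|_{Z'}\colon Z'\to X$ is a closed immersion into an affine scheme, hence affine. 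Since $Z'$ is quasi-compact we may moreover replace $X'$ by a quasi-compact open substack containing $Z'$, as an \'etale neighborhood of $Z'$ in the latter is one in $X'$.

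The heart of the matter is to promote the facts that $f|_{Z'}$ is affine and that $f$ is an isomorphism over every thickening of $Z$ to the desired conclusion. First one controls the stacky behavior of $X'$ along $Z'$: applying the weak Mayer--Vietoris condition not merely for field-valued $W$ but for all $W\to X$ landing set-theoretically in $Z$ — in particular for the infinitesimal thickenings of the points of $Z$ — one finds that $X'\times_X W\to W$ \emph{and} $(X'\times_X X')\times_X W\to W$ are both isomorphisms, so that $\Delta_f\colon X'\to X'\times_X X'$ becomes an isomorphism after each such base change; hence $X'$ has trivial, and even infinitesimally trivial, inertia along $Z'$. An \'etale d\'evissage — controlling in turn the stacky and then the schematic behavior of $X'$ near the affine $Z'$, and using that global functions restrict onto $\Gamma(Z',\Orb_{Z'})=A/I$ since $A$ already does — then produces an \'etale morphism $p\colon X''\to X'$ which is an isomorphism over $Z'$ and such that $X''$ is quasi-affine over $X$. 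Then $f\circ p$ is quasi-affine, as required.

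I expect the second paragraph to be the main obstacle. Since no finiteness or separation hypotheses are imposed on $f$ — in the intended applications $X'$ is something like $\spec\hat A$ — the reduction of $X'$ to a quasi-affine scheme near $Z'$ genuinely requires the general d\'evissage for algebraic stacks and algebraic spaces, and it is exactly here that one must use that a weak Mayer--Vietoris square restricts to an isomorphism over \emph{all} infinitesimal (indeed henselian) neighborhoods of $Z$, and not merely over $Z$.
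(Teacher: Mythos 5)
Your first paragraph is fine and matches the paper's reduction: work smooth-locally so that $X=\spec A$ is affine, apply the weak Mayer--Vietoris condition to $W=Z$ to see that $f_Z\colon Z'\to Z$ is an isomorphism, so that $Z'$ is affine and the inertia of $X'$ is trivial (even infinitesimally) along $Z'$. But your second paragraph, which you yourself flag as ``the main obstacle'', is where all the content lives, and it is not an argument: ``an \'etale d\'evissage \dots then produces an \'etale morphism $p\colon X''\to X'$ which is an isomorphism over $Z'$ and such that $X''$ is quasi-affine over $X$'' is a restatement of the conclusion, not a proof of it. The aside about global functions surjecting onto $\Gamma(Z',\Orb_{Z'})=A/I$ does not lead anywhere concrete.

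The two ingredients you are missing are the following. First, the Deligne--Mumford locus of $X'$ is \emph{open}, and it contains $Z'$ because the inertia of $X'$ is trivial along $Z'\cong Z$ (your observation); since $Z'$ is quasi-compact, one therefore gets an affine scheme $V$ and an \'etale morphism $V\to X'$ whose image contains $Z'$. Second, the section trick: $Z_V:=V\times_{X'}Z'\to Z'\cong Z$ is affine, \'etale and surjective, so after passing to an \'etale cover of $X$ (permitted, since the statement is smooth-local on $X$) it acquires a section $s\colon Z\to Z_V$, which is an open and closed immersion because $Z_V\to Z$ is \'etale and separated. Then $X'':=V\setminus(Z_V\setminus s(Z))$ is an open subscheme of the affine scheme $V$; shrinking it to a quasi-compact open neighborhood of $s(Z)$ makes $X''$ quasi-affine, hence $X''\to X=\spec A$ is quasi-affine, while $X''\to X'$ is \'etale and restricts over $Z'$ to the isomorphism $s(Z)\cong Z'$. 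Note finally that your closing claim---that one must use the isomorphism over all infinitesimal or henselian neighborhoods of $Z$---is not borne out: the isomorphism $f_Z\colon Z'\to Z$ over $Z$ itself is all this proposition needs.
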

  \begin{proof}
    By Lemma \ref{L:mv_bc}, we may assume that $X$ is an affine scheme.
    Observe that 
    the Deligne--Mumford locus of $X'$  is an open substack containing $Z'$. 
    In particular, there exist an affine scheme $V$ and an \'etale morphism $V \to X'$ whose image contains $Z'$. Let $Z_V = V\times_{X'} 
  Z'$; then the composition $z\colon Z_V \to Z' \simeq Z=X\smallsetminus U$ is affine and \'etale. After passing to an \'etale 
  cover of $X$, we may assume that the morphism $Z_V \to Z' \simeq Z$ has a section 
  $s\colon Z \to Z_V$. Since $z$ is \'etale and separated, $s$ is an open and closed 
  immersion; it follows that $X''=V \smallsetminus (Z_V \smallsetminus s(Z))$ is an open 
  subscheme of $V$. After replacing $X''$ with a quasi-compact open neighborhood
  of $s(Z)$, we can assume that $X''$ is quasi-compact.
  Thus, $X'' \to X' \to X$ is a quasi-affine morphism and $X'' \to X'$ is an isomorphism over $Z'$.
  \end{proof}
  The following is the last lemma of the section.
  \begin{lemma}\label{L:supp_flat}
    Fix a tor-independent Mayer--Vietoris square as in \eqref{E:MV-square}. If $j$ is 
    quasi-compact, then $\QCOH_Z(X) \subseteq \QCOH_{f-\mathrm{fl}}(X)$. 
  \end{lemma}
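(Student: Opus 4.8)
The plan is to reduce to the affine case and then decompose an object of $\QCOH_Z(X)$ as a filtered colimit of finite-type pieces, each of which is pushed forward from an infinitesimal thickening of $Z$ along which $f$ has been trivialized.

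First I would reduce to $X=\spec A$ affine: being $f$-flat is smooth-local on $X$ (Definition~\ref{D:f-flat}), the category $\QCOH_Z$ is compatible with smooth pullback, and being a tor-independent Mayer--Vietoris square is flat-local on $X$ (Lemma~\ref{L:mv_bc}\itemref{LI:mv_bc:verify}). Since $j$ is quasi-compact, $U$ is a finite union of principal opens, so $Z=V(I)$ for a finitely generated ideal $I\subseteq A$; hence $i\colon Z\inj X$ is a finitely presented closed immersion and $X$ is quasi-compact, so Lemmas~\ref{L:fp_thickenings} and~\ref{L:flatness_conc_cl} apply.

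Next I would show that all the thickenings $Z^{[n]}$ are $f$-flat and trivialized by $f$. As $i\colon Z\to X$ has image disjoint from $U$, the tor-independence hypothesis gives that $\Orb_Z$ is $f$-flat, while the underlying weak Mayer--Vietoris property (applied to $Z\to X$) gives that $f_Z\colon X'\times_X Z\to Z$ is an isomorphism. Applying Lemma~\ref{L:flatness_conc_cl}\itemref{LI:flatness_conc_cl:iso} with $(W,W',g,v)=(X,X',f,i)$ then shows that $Z^{[n]}$ is $f$-flat and that $f_{Z^{[n]}}\colon X'\times_X Z^{[n]}\to Z^{[n]}$ is an isomorphism for every $n\geq 0$; in particular $\QCOH_{f_{Z^{[n]}}-\mathrm{fl}}(Z^{[n]})=\QCOH(Z^{[n]})$ since pullback along an isomorphism is an exact equivalence (cf.\ Lemma~\ref{L:der_char_fflat}).

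Finally, given $M\in\QCOH_Z(X)$, write $M=\varinjlim_\lambda M_\lambda$ as the filtered colimit of its finite-type quasi-coherent subsheaves. Exactness of $j^*$ gives $M_\lambda\in\QCOH_Z(X)$, so by Lemma~\ref{L:fp_thickenings}\itemref{LI:fp_thickenings:ft} there is an $n=n(\lambda)\gg 0$ with $M_\lambda\cong i^{[n]}_*(i^{[n]})^*M_\lambda$. Since $(i^{[n]})^*M_\lambda$ lies in $\QCOH_{f_{Z^{[n]}}-\mathrm{fl}}(Z^{[n]})$ and $i^{[n]}$ is an $f$-flat closed immersion, Lemma~\ref{L:flatness_conc_cl}\itemref{LI:flatness_conc_cl:conc} (with $(W,W',g,v)=(X,X',f,i^{[n]})$) shows that $M_\lambda=i^{[n]}_*(i^{[n]})^*M_\lambda$ is $f$-flat. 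As $f$-flatness of quasi-coherent sheaves is stable under filtered colimits ($\Tor$ commutes with filtered colimits after restriction to a smooth affine chart), $M$ itself is $f$-flat, which is what we wanted. The only genuine input is the opening reduction, which is precisely where quasi-compactness of $j$ enters: it ensures $Z$ is cut out by a finitely generated ideal, so that Lemma~\ref{L:fp_thickenings} and the thickening argument are available.
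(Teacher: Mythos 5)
Your proof is correct and follows essentially the same route as the paper's: reduce to $X$ affine with $Z$ finitely presented, write $M$ as a filtered colimit of finite-type subsheaves in $\QCOH_Z(X)$, identify each with a pushforward from a thickening $Z^{[n]}$ via Lemma~\ref{L:fp_thickenings}\itemref{LI:fp_thickenings:ft}, and conclude by Lemma~\ref{L:flatness_conc_cl}. You merely spell out two steps the paper leaves implicit (that Lemma~\ref{L:flatness_conc_cl}\itemref{LI:flatness_conc_cl:iso} supplies the hypotheses for part~\itemref{LI:flatness_conc_cl:conc}, and that $f$-flatness passes to filtered colimits), both of which are fine.
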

  \begin{proof}
    We may assume that $X$ is affine and that $i\colon Z\to X$ is finitely presented. In this case if $N\in \QCOH_Z(X)$, then we may 
    write $N$ as the union of its quasi-coherent subsheaves of finite type and these also belong to 
    $\QCOH_Z(X)$. Thus, it is sufficient to prove the result for such sheaves. By Lemma 
    \ref{L:fp_thickenings}\itemref{LI:fp_thickenings:ft}, there is an $n\gg 0$ such that $N 
      \to i_*^{[n]}(i^{[n]})^*N$ is an isomorphism. The result now follows from Lemma 
      \ref{L:flatness_conc_cl}\itemref{LI:flatness_conc_cl:conc}.
  \end{proof}

  Flat Mayer--Vietoris squares and
  weak Mayer--Vietoris squares are stable under arbitrary base change
  but tor-independent Mayer--Vietoris squares are not. We now give six examples
  of squares as in \eqref{E:MV-square}:
  \begin{itemize}
    \item Two examples of weak Mayer--Vietoris squares that are not
      tor-independent Mayer--Vietoris squares. (Example~\ref{E:weak-but-not-MV})
    \item Two examples of tor-independent Mayer--Vietoris squares
      where $f$ is a non-flat closed immersion. (Examples~\ref{E:diagonal-square-not-flat}--\ref{EX:valuation-MV-square})
    \item A tor-independent Mayer--Vietoris square that is not universally a tor-independent Mayer--Vietoris square. (Example~\ref{E:mv-not-umv})
    \item A flat Mayer--Vietoris square, with $j$ not quasi-compact, that 
      is not a pushout in the category of affine schemes. (Example~\ref{E:fmv-nonqc-nonpo})
  \end{itemize}
  As we will see later, tor-independent Mayer--Vietoris squares satisfy gluing of
  quasi-coherent sheaves. In particular,
  $\Gamma(X,\Orb_X)=\Gamma(X',\Orb_{X'})\times_{\Gamma(U',\Orb_{U'})}\Gamma(U,\Orb_U)$,
  which does not always hold for weak Mayer--Vietoris squares.

  \begin{example}\label{E:weak-but-not-MV}
    Let $A=k[x]$, $B=A[z_1,z_2,\dots]/(xz_1,\{z_k-xz_{k+1}\}_{k\geq 1})$
    and $C=B/(z_1)$. Then 
    $A/(x^n) = B/(x^n) = C/(x^n) = k[x]/(x^n)$ and $A_x = B_x = C_x = k[x]_x$.
    Let $X=\spec A$, $Z=\spec A/(x)$, $U=X\smallsetminus Z$,
    $X'=\spec B$, $U'=X'\smallsetminus Z$ and $X''=\spec C$. Then the squares
    \[
    \vcenter{\xymatrix{
        U'\ar[r]\ar[d] & X'\ar[d] & & U''\ar[d] \ar[r] & X'' \ar[d] \\
        U\ar[r] & X\ar@{}[ul]|\square & & U' \ar[r] & X' \ar@{}[ul]|\square
      }}
    \]
    are weak Mayer--Vietoris squares but not
    tor-independent Mayer--Vietoris squares.
    Indeed $A\to B\times_{B_x} A_x=B$ and
    $B\to C\times_{C_x} B_x=C$ are not isomorphisms.

    Note that $Z\inj X$ is a Cartier divisor but $Z\inj X'$ is not a Cartier
    divisor.
  \end{example}

  \begin{example}\label{E:diagonal-square-not-flat}
    A diagonal Mayer--Vietoris square (Example~\ref{E:diagonal-square-is-MV})
    is typically not flat, e.g., let $A$ be a noetherian
    ring, $I\subseteq A$ an ideal and consider the $I$-adic completion
    $\widehat{A}_I$. Let $f\colon X'=\spec \widehat{A}_I\to X=\spec A$ and
    $j\colon U=X\smallsetminus V(I)\to X$. This gives rise to a flat Mayer--Vietoris square as in 
    \eqref{E:MV-square} and the diagonal Mayer--Vietoris square is tor-independent. 
    However, the closed immersion $\Delta_f$ is usually not flat except in
    cases such as when $A$ is already $I$-adically complete. For example,
    $\Delta_f$ is not flat when $A$ is an integral domain, finitely generated
    over a field $k$, and $I\neq 0$, $I\neq A$.
  \end{example}
    
  \begin{example}\label{EX:valuation-MV-square}
    Let $V$ be a valuation ring with valuation $\nu\colon K(V)^\times\to
    \Gamma$ and let $x\in V$ be non-zero. Then $V_x$ is also a valuation ring
    and $V_x=V_P$
    where $P\subseteq V$ is the maximal prime ideal properly contained in the
    prime ideal $Q=\sqrt{(x)}$. Explicitly:
    \begin{align*}
    P &= \{a\in V\;: \forall n\in \N\;:\; \nu(a) > n\nu(x)\}\\
    Q &= \{a\in V\;: \exists n\in \N \;:\; n\nu(a) \geq \nu(x)\}.
    \end{align*}
    Let $X=\spec(V)$, $U=\spec(V_P)$, $Z=\spec(V/xV)$ and $X'=\spec(V/P)$.
    The resulting square as in \eqref{E:MV-square} is tor-independent. Indeed,
    it is a weak Mayer--Vietoris square since $P\subseteq (x^n)$ for all $n$.
    It remains to verify that $\Tor_i^A(V/xV,V/P)=0$ for all $i>0$.
    But $x\notin P$ so $x$ is $V/P$-regular, hence the Tors vanish.
  \end{example}

  \begin{example}\label{E:mv-not-umv}
    Consider the valuation $\nu\colon k(x,y)^\times\to \Z^2$ with $\nu(x)=(0,1)$
    and $\nu(y)=(1,0)$ where $\Z^2$ is lexicographically ordered. The
    corresponding valuation ring $V$ has three prime ideals: the maximal
    ideal $Q=(x)$, the prime ideal $P=(y,y/x,y/x^2,\dots)$ and the zero ideal.
    Then $X'=\spec V/P\to X=\spec V$, $U=\spec V_P=\spec V_x$ is
    a tor-independent Mayer--Vietoris square as in the previous example.

    Let $A=V/yV$ and let $z_n=y/x^n$ denote the image of $y/x^n$ in $A$. Then
    $A=k[x,z_1,...]/(xz_1,z_k-xz_{k+1})_{(x,z_1,z_2,...)}$ and
    $B=A/PA=k[x]_{(x)}$. Let $Y'=\spec B$, $Y=\spec A$ and $U=\spec A_x$.
    As in Example~\ref{E:weak-but-not-MV}, $A/(x^n)=k[x]/(x^n)=B/(x^n)$
    but $A\to B\times_{B_x} A_x=B$ is not an isomorphism.
  \end{example}

  \begin{example}\label{E:fmv-nonqc-nonpo}
    Let $X=\spec A$ be the spectrum of an absolutely flat ring such that there exists
    a non-discrete point $x\in |X|$. Let $\mathfrak{m}\subseteq A$ be the corresponding
    maximal ideal. For a concrete example, let $\mathbb{P}$ be the set
    of primes of $\Z$, let $A=\prod_{p\in \mathbb{P}} \mathbb{F}_p$ and
    let $\mathfrak{m}$ be a maximal ideal containing the ideal $\oplus_{p\in \mathbb{P}} 
    \mathbb{F}_p$.
    %
    Let $X' = \spec A/\mathfrak{m}$ and let $f\colon X' \to X$ be the induced
    closed immersion. Let $j\colon U=X\smallsetminus \{x\} \to X$ be the open immersion of
    the complement. Since $A$ is an absolutely flat ring, $f$ is also flat. Let 
    $U'=X'\times_X U = \emptyset$; then the resulting square is a flat Mayer--Vietoris 
    square but $j$ is not quasi-compact.

    Note that the natural map $|X'|\amalg |U|=|X'|\amalg_{|U'|} |U|\to |X|$ is
    not a homeomorphism since $|X'|\subset |X|$ is not open. In particular, the
    functor $\Phi_{\Et}$ is not an equivalence, cf.\ Corollary~\ref{C:wMV-is-univ-submersive}.

    Let $B = \Gamma(U,\Orb_U)$. If the square was a pushout in the category of 
    affine schemes, then corresponding to the  maps $X'\to X'$ and $U \to 
    \spec B$, there would be a unique map
    $g\colon X \to \spec B\amalg X'=\spec (B\times A/\mathfrak{m})$. Then
    $g^{-1}(X')=X'$ which is a contradiction since $X'\subseteq X$ is not open.
    
    This 
    example also shows that 
    \[
    \Gamma(X,\Orb_X) \to \Gamma(U,\Orb_U) \times \Gamma(X',\Orb_{X'})
    \]
    is not an isomorphism. In particular, the functor $\Phi_{\QCOH}$ is not even fully faithful.
  \end{example}
\section{Gluing of modules in Mayer--Vietoris squares}\label{S:QCoh-gluing}
In this section, we show that quasi-coherent sheaves of modules, and related
objects such as quasi-coherent sheaves of algebras, can be glued in tor-independent
Mayer--Vietoris squares. This generalizes previous results of Ferrand--Raynaud~\cite[App.]{MR0272779} and Moret-Bailly \cite{MR1432058}. We will prove this using some ideas from the theory
developed in \cite[\S5]{perfect_complexes_stacks} for triangulated categories that are perfectly suited to simultaneously deal with the non-flatness of $f$ and the non-affineness of $j$.  For quasi-compact and quasi-separated algebraic spaces and
in the context of stable $\infty$-categories, this was recently
accomplished (independently) by Bhatt \cite[Prop.~5.6]{2014arXiv1404.7483B}. Since we work with morphisms of algebraic stacks that may not have finite cohomological dimension, we do not expect gluing results to hold in this generality in the unbounded derived category. Before we get to gluing, we characterize the tor-independent squares in terms of derived categories.
\begin{notation}
  Let $i\colon Z \inj X$ be a closed immersion of algebraic stacks with complement $j\colon 
  U \to X$. Define
  \[
  \DQCOH[,Z](X) = \{ M \in \DQCOH(X) \suchthat \LDERF \QCPBK{j}M \simeq 0\}.
  \]
\end{notation}
Recall that $f\colon X'\to X$ is \emph{concentrated} if $f$ is quasi-compact,
quasi-separated and has universal finite cohomological
dimension~\cite[Def.~2.4]{perfect_complexes_stacks}.
\begin{proposition}\label{P:MV_SQ_MB}
  Fix a cartesian diagram as in \eqref{E:MV-square} with $f$ concentrated, $j$ 
  quasi-compact and $X$ quasi-compact and quasi-separated. Consider the
  following conditions:
  \begin{enumerate}
    \item \label{PI:MV_SQ_MB:timv} the square is a tor-independent Mayer--Vietoris square;
    \item \label{PI:MV_SQ_MB:tdqcb} $\RDERF f_*$ and $\LDERF
      \QCPBK{f}$ induce $t$-exact equivalences
      $\DQCOH[,Z]^b(X)\simeq \DQCOH[,Z']^b(X')$;
    \item \label{PI:MV_SQ_MB:tdqc} $\RDERF f_*$ and $\LDERF
      \QCPBK{f}$ induce $t$-exact equivalences
      $\DQCOH[,Z](X)\simeq \DQCOH[,Z'](X')$; and
    \item \label{PI:MV_SQ_MB:dqc} $\RDERF f_*$ and $\LDERF
      \QCPBK{f}$ induce equivalences 
      $\DQCOH[,Z](X)\simeq\DQCOH[,Z'](X')$.
  \end{enumerate}
  Then
  \itemref{PI:MV_SQ_MB:timv}$\implies$\itemref{PI:MV_SQ_MB:tdqcb}$\iff$\itemref{PI:MV_SQ_MB:tdqc}$\implies$\itemref{PI:MV_SQ_MB:dqc}.
  Moreover, if $f_Z$ is affine, then all conditions are equivalent; and if $f_Z$
  is representable and $Z$ has quasi-affine diagonal, then
  \itemref{PI:MV_SQ_MB:tdqcb}$\implies$\itemref{PI:MV_SQ_MB:timv}.
\end{proposition}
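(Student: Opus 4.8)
The plan is to prove $\itemref{PI:MV_SQ_MB:timv}\implies\itemref{PI:MV_SQ_MB:tdqc}$ (which yields $\itemref{PI:MV_SQ_MB:dqc}$ a fortiori), then $\itemref{PI:MV_SQ_MB:tdqc}\iff\itemref{PI:MV_SQ_MB:tdqcb}$ and the trivial $\itemref{PI:MV_SQ_MB:tdqc}\implies\itemref{PI:MV_SQ_MB:dqc}$, then the converse $\itemref{PI:MV_SQ_MB:dqc}\implies\itemref{PI:MV_SQ_MB:tdqc}$, and finally the equivalence with $\itemref{PI:MV_SQ_MB:timv}$ under the extra hypothesis. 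Since $j$ is quasi-compact and $X$ is quasi-compact and quasi-separated, we may assume $i\colon Z\inj X$ is finitely presented \cite[Prop.~8.2]{rydh-2014}; write $Z'=X'\times_X Z$ with closed immersion $(i')\colon Z'\inj X'$. Two observations require no Mayer--Vietoris hypothesis: $\LDERF\QCPBK{f}$ maps $\DQCOH[,Z](X)$ into $\DQCOH[,Z'](X')$, because $\LDERF\QCPBK{(j')}\LDERF\QCPBK{f}\homotopic\LDERF\QCPBK{(f_U)}\LDERF\QCPBK{j}$; and, $f$ being concentrated, flat base change along $j$ gives $\LDERF\QCPBK{j}\RDERF f_*\homotopic\RDERF(f_U)_*\LDERF\QCPBK{(j')}$, so $\RDERF f_*$ maps $\DQCOH[,Z'](X')$ into $\DQCOH[,Z](X)$. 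Thus the adjunction $\LDERF\QCPBK{f}\dashv\RDERF f_*$ restricts to these subcategories, and everything comes down to whether the restricted unit and counit are isomorphisms and whether $\LDERF\QCPBK{f}$ is $t$-exact there.

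For $\itemref{PI:MV_SQ_MB:timv}\implies\itemref{PI:MV_SQ_MB:tdqc}$: since $Z\to X$ has image disjoint from $U$, the tor-independent Mayer--Vietoris hypothesis says $f_Z$ is an isomorphism and $Z$, $f$ are tor-independent, so by Lemma~\ref{L:flatness_conc_cl}\itemref{LI:flatness_conc_cl:iso} the sheaf $\Orb_{Z^{[n]}}$ is $f$-flat and $f_{Z^{[n]}}\colon X'\times_X Z^{[n]}\to Z^{[n]}$ is an isomorphism for all $n$. For a finite-type $M\in\QCOH_Z(X)$, Lemma~\ref{L:fp_thickenings}\itemref{LI:fp_thickenings:ft} gives $M\homeq i^{[n]}_*P$ with $P=(i^{[n]})^*M$, and tor-independent base change \cite[Cor.~4.13]{perfect_complexes_stacks} (applicable since $f$ and $i^{[n]}$ are tor-independent) yields $\LDERF\QCPBK{f}M\homotopic(i')^{[n]}_*(f_{Z^{[n]}}^*P)$, a sheaf in $\QCOH_{Z'}(X')$, while $f\circ(i')^{[n]}=i^{[n]}\circ f_{Z^{[n]}}$ together with the fact that $f_{Z^{[n]}}$ is an isomorphism give $\RDERF f_*\LDERF\QCPBK{f}M\homotopic i^{[n]}_*((f_{Z^{[n]}})_*f_{Z^{[n]}}^*P)\homeq M$. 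Dually, a finite-type $N\in\QCOH_{Z'}(X')$ equals $(i')^{[m]}_*Q$ with $(Z')^{[m]}=X'\times_X Z^{[m]}$, so $\RDERF f_*N\homotopic i^{[m]}_*((f_{Z^{[m]}})_*Q)$ is a sheaf and $\LDERF\QCPBK{f}\RDERF f_*N\homotopic N$. Hence the unit, the counit, and preservation of the hearts hold on finite-type objects; since every object of $\QCOH_Z(X)$ (resp.\ of $\QCOH_{Z'}(X')$) is a filtered colimit of such, since $\DQCOH[,Z](X)$ is the localizing subcategory generated by its heart, and since both functors commute with filtered colimits ($\RDERF f_*$ because $f$ is concentrated), the same holds throughout. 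As $\LDERF\QCPBK{f}$ is right $t$-exact in general, preservation of the hearts makes the equivalence $t$-exact, which is $\itemref{PI:MV_SQ_MB:tdqc}$.

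The implications $\itemref{PI:MV_SQ_MB:tdqc}\implies\itemref{PI:MV_SQ_MB:dqc}$ and $\itemref{PI:MV_SQ_MB:tdqc}\implies\itemref{PI:MV_SQ_MB:tdqcb}$ are immediate; for $\itemref{PI:MV_SQ_MB:tdqcb}\implies\itemref{PI:MV_SQ_MB:tdqc}$ one writes an arbitrary object as the homotopy colimit of its bounded truncations $\trunc{\leq m}\trunc{\geq-n}(-)\in\DQCOH[,Z]^b(X)$ and transports the bounded equivalence, together with its $t$-exactness, using again that $\LDERF\QCPBK{f}$ and $\RDERF f_*$ commute with homotopy colimits. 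The genuinely nontrivial step is $\itemref{PI:MV_SQ_MB:dqc}\implies\itemref{PI:MV_SQ_MB:tdqc}$: one is handed an abstract equivalence and must show it is $t$-exact, equivalently (as $\LDERF\QCPBK{f}$ is always right $t$-exact) that $\COHO{<0}(\LDERF\QCPBK{f}M)=0$ for $M$ in the heart, i.e.\ $\QCOH_Z(X)\subseteq\QCOH_{f-\mathrm{fl}}(X)$. The strategy I would try is to exploit that the equivalence makes $\RDERF f_*$ on $\DQCOH[,Z'](X')$ conservative and of bounded cohomological amplitude (again, $f$ concentrated): if $\LDERF\QCPBK{f}M$ had a nonzero cohomology sheaf in some negative degree, an edge-term comparison in the Grothendieck spectral sequence computing $\RDERF f_*\LDERF\QCPBK{f}M\homeq M$ would force its topmost such sheaf to be annihilated by $f^*$, hence supported off $Z'$ although also supported on $Z'$, contradicting conservativity. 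I expect this to be the main obstacle, being the only step driven by the abstract equivalence rather than by an explicit computation, and because $\LDERF\QCPBK{f}M$ need not be bounded below, so that isolating the surviving edge term takes some care---e.g.\ by first replacing the heart by a perfect compact generator supported on $Z$, whose pullback is perfect and hence bounded.

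Finally, assume $\itemref{PI:MV_SQ_MB:tdqc}$. Since $\LDERF\QCPBK{f}$ is $t$-exact it carries $\QCOH_Z(X)$ into the heart, i.e.\ $\QCOH_Z(X)\subseteq\QCOH_{f-\mathrm{fl}}(X)$; in particular $\Orb_Z$ is $f$-flat, so $Z$ and $f$ are tor-independent. Restricting the equivalence to hearts and then to the full subcategories of sheaves killed by the respective ideals of $Z$ and $Z'$ (these are the essential images of $i_*$ and $(i')_*$, and are preserved by $f^*$ and $f_*$), we obtain that $f_Z^*\colon\QCOH(Z)\to\QCOH(Z')$ is an equivalence with quasi-inverse $f_{Z,*}$; moreover, from $\RDERF f_*(i')_*\homeq i_*\RDERF f_{Z,*}$ and the $t$-exactness of $\RDERF f_*$, the functor $\RDERF f_{Z,*}$ is $t$-exact on $\QCOH(Z')$, i.e.\ $f_Z$ is cohomologically affine, and the projection formula gives $f_{Z,*}f_Z^*(-)\homeq(-)\otimes_{\Orb_Z}f_{Z,*}\Orb_{Z'}$; comparing with $f_{Z,*}f_Z^*(-)\homeq(-)$ and evaluating at $\Orb_Z$ yields $f_{Z,*}\Orb_{Z'}\homeq\Orb_Z$. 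If $f_Z$ is affine, an affine morphism with $f_{Z,*}\Orb_{Z'}\homeq\Orb_Z$ is an isomorphism; if $f_Z$ is representable and $Z$ has quasi-affine diagonal, cohomological affineness plus representability gives affineness by Serre's criterion (the hypothesis on the diagonal controlling the smooth-local verification), and one concludes as before. In either case $f_Z$ is an isomorphism and $Z$, $f$ are tor-independent, so Lemma~\ref{L:MV-open-qc}\itemref{LI:MV-open-qc:ti} shows the square is a tor-independent Mayer--Vietoris square, i.e.\ $\itemref{PI:MV_SQ_MB:timv}$.
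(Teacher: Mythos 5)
Your overall architecture matches the paper's: reduce to a finitely presented complement $i\colon Z\inj X$, prove \itemref{PI:MV_SQ_MB:timv}$\implies$(bounded $t$-exact equivalence) by the explicit computation with the thickenings $Z^{[n]}$ via Lemmas~\ref{L:fp_thickenings} and~\ref{L:flatness_conc_cl} followed by a colimit argument, and recover \itemref{PI:MV_SQ_MB:timv} at the end from the adjunction isomorphisms for $\Orb_Z$ together with Serre's criterion in the quasi-affine-diagonal case. The genuine gap is \itemref{PI:MV_SQ_MB:dqc}$\implies$\itemref{PI:MV_SQ_MB:tdqc}, which you present only as ``the strategy I would try''. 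As sketched it does not work: (a) you only address the left $t$-exactness of $\LDERF\QCPBK{f}$, whereas \itemref{PI:MV_SQ_MB:tdqc} also requires the right $t$-exactness of $\RDERF f_*$ on $\DQCOH[,Z']^{\leq 0}(X')$, and in the paper's proof that is the statement one must establish \emph{first}; (b) the step ``its topmost such sheaf is annihilated by $f^*$, hence supported off $Z'$ \dots contradicting conservativity'' is circular: conservativity of the equivalence $\LDERF\QCPBK{f}$ on $\DQCOH[,Z](X)$ says that a nonzero $M\in\QCOH_Z(X)$ has $\LDERF\QCPBK{f}M\neq 0$, but it does not prevent $\COHO{0}(\LDERF\QCPBK{f}M)=f^*M$ from vanishing while lower cohomology sheaves survive---and ruling out exactly this configuration is the content of the claim. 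The paper's argument instead uses concentration quantitatively: there is a uniform $n$ with $\trunc{\geq k}\RDERF f_*N\simeq\trunc{\geq k}\RDERF f_*(\trunc{\geq k-n}N)$, so for $N\in\DQCOH[,Z']^{\leq 0}(X')$ the pushforward $\RDERF f_*N$ is bounded above; if $m$ is its top nonvanishing degree, the counit isomorphism together with the unconditional right $t$-exactness of $\LDERF\QCPBK{f}$ gives $\COHO{m}(N)\cong\COHO{m}(\LDERF\QCPBK{f}\RDERF f_*N)\neq 0$, hence $m\leq 0$; left $t$-exactness of $\LDERF\QCPBK{f}$ then follows from the unit isomorphism $M\simeq\RDERF f_*\trunc{\geq -m}\LDERF\QCPBK{f}M$ for $m\geq n$. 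You should either reproduce this argument or supply a genuinely complete alternative.

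A secondary soft spot: in \itemref{PI:MV_SQ_MB:timv}$\implies$\itemref{PI:MV_SQ_MB:tdqc} you pass from finite-type sheaves to all of $\DQCOH[,Z](X)$ by asserting that it is the localizing subcategory generated by its heart. Homotopy colimits of the truncations $\trunc{\leq n}M$ only reach bounded-below objects from bounded ones; an object unbounded below is a homotopy \emph{limit} of its truncations $\trunc{\geq -n}M$, which a localizing subcategory need not contain. This is precisely why the paper routes through the bounded statement \itemref{PI:MV_SQ_MB:tdqcb} and then proves \itemref{PI:MV_SQ_MB:tdqcb}$\implies$\itemref{PI:MV_SQ_MB:tdqc} with a hocolim argument for $\LDERF\QCPBK{f}$ on $\DQCOH[,Z]^{\geq 0}(X)$ and, once more, the cohomological-dimension bound for $\RDERF f_*$ on $\DQCOH[,Z']^{\leq 0}(X')$. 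Your own \itemref{PI:MV_SQ_MB:tdqcb}$\implies$\itemref{PI:MV_SQ_MB:tdqc} paragraph has the same issue when it invokes writing ``an arbitrary object as the homotopy colimit of its bounded truncations $\trunc{\leq m}\trunc{\geq-n}(-)$'': a doubly-indexed family of bounded truncations does not recover an unbounded-below complex as a homotopy colimit.
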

In the application of Proposition \ref{P:MV_SQ_MB} to the main result of this section (Theorem \ref{T:MV-QCoh-gluing}), we will only need \itemref{PI:MV_SQ_MB:timv}$\implies$\itemref{PI:MV_SQ_MB:dqc} when $X$ is an affine scheme and $X'$ is a quasi-affine scheme. We have included the general situation for independent interest. Note that condition \itemref{PI:MV_SQ_MB:dqc} is the definition of a Mayer--Vietoris
$\DQCOH$-square~\cite[Def.~5.5]{perfect_complexes_stacks} and Proposition \ref{P:MV_SQ_MB}\itemref{PI:MV_SQ_MB:timv}$\Rightarrow$\itemref{PI:MV_SQ_MB:dqc} gives another proof
of~\cite[Ex.~5.6]{perfect_complexes_stacks}. 
\begin{remark}
  Taking $f\colon \spec k\to BG$ as in
  \cite[Rem.~1.6]{hallj_dary_alg_groups_classifying} and $U=\emptyset$
  provides an example where \itemref{PI:MV_SQ_MB:tdqcb} is satisfied,
  but \itemref{PI:MV_SQ_MB:timv} is not satisfied ($f$ is representable but
  not affine
  and its target does not have affine stabilizers).
\end{remark}
\begin{proof}[Proof of Proposition~\ref{P:MV_SQ_MB}]
  Trivially, \itemref{PI:MV_SQ_MB:tdqc} implies \itemref{PI:MV_SQ_MB:tdqcb} and \itemref{PI:MV_SQ_MB:dqc}. Since $X$ is quasi-compact and quasi-separated, we may assume that there is a finitely presented complement $i\colon Z \inj X$ of $U$~\cite[Prop.~8.2]{rydh-2014}.  If \itemref{PI:MV_SQ_MB:tdqcb} is satisfied, then
  $\LDERF \QCPBK{f}i_*\Orb_Z=f^*i_*\Orb_Z$ and the adjunction maps
  $i_*\Orb_Z\to f_*f^*i_*\Orb_Z$ and $f^*f_*i'_*\Orb_{Z'}\to i'_*\Orb_{Z'}$ are
  isomorphisms. If $f_Z$ is affine, then \itemref{PI:MV_SQ_MB:timv} holds (Lemma \ref{L:MV-open-qc}\itemref{LI:MV-open-qc:ti}). Otherwise, if $Z$
  has quasi-affine diagonal, then we start by noting that the
  $t$-exactness of $\RDERF
  f_*$ also shows that $\RDERF (f_Z)_*$ is $t$-exact.
  By~\cite[Lem.~2.2(vi)]{perfect_complexes_stacks}, it follows
  that if $\tilde{Z} \to Z$ is a smooth morphism, where $\tilde{Z}$ is
  an affine scheme, then the pullback $f_{\tilde{Z}}$ of $f_Z$ to
  $\tilde{Z}$ is such that $\RDERF (f_{\tilde{Z}})_*$ is
  $t$-exact. Since $f_{\tilde{Z}}$ is representable, we conclude that
  $f_{\tilde{Z}}$ is affine from Serre's Criterion \cite[Thm.~8.7]{rydh-2009}.
  By smooth descent, $f_Z$ is affine, and we
  again see that \itemref{PI:MV_SQ_MB:timv} holds. For
  \itemref{PI:MV_SQ_MB:dqc}$\implies$\itemref{PI:MV_SQ_MB:tdqcb} when $f_Z$ is
  affine, it is sufficient to prove that $\RDERF^m f_*N = 0$ for all $m>0$ and
  $N\in \QCOH_{Z'}(X')$. Since $\RDERF^mf_*(-)$ is compatible with filtered
  colimits, by writing $N$ as a union of its finite type subsheaves
  \cite{rydh-2014}, we are reduced to proving the assertion when $N$ is of
  finite type. In this case, there is an $n>0$ such that $N \simeq
  i'^{[n]}_*(i'^{[n]})^*N$ (Lemma
  \ref{L:fp_thickenings}\itemref{LI:fp_thickenings:ft}). Then $\RDERF^m f_*N
  \simeq \RDERF^m f_*i'^{[n]}_*(i'^{[n]})^*N \simeq \RDERF^m
  (f_{Z^{[n]}})_*(i'^{[n]})^*N = 0$ for all $m>0$ as $f_Z$, and so
  $f_{Z^{[n]}}$, is affine.

  We will finish the proof by showing that \itemref{PI:MV_SQ_MB:timv}$\implies$\itemref{PI:MV_SQ_MB:tdqcb}$\implies$\itemref{PI:MV_SQ_MB:tdqc}.
  For every ${M} \in \DQCOH[,Z](X)$
  and ${N} \in \DQCOH[,Z'](X')$ we have adjunction maps
  \[
  \eta_{{M}} \colon {M} \to \RDERF f_*\LDERF
  \QCPBK{f} {M} \quad\mbox{and}\quad \epsilon_{{N}} \colon \LDERF
  \QCPBK{f} \RDERF f_* {N} \to {N}.
  \]
  We will show that these are quasi-isomorphism. 

  For \itemref{PI:MV_SQ_MB:timv}$\implies$\itemref{PI:MV_SQ_MB:tdqcb} it is enough---by standard truncation
  arguments---to prove:
  \begin{itemize}
    \item $\eta_{{M}[0]}$ and $\epsilon_{{N}[0]}$ are
  quasi-isomorphisms; and
  \item $\LDERF f^*({M}[0])\to (f^*{M})[0]$
    and $(f_*{N})[0]\to \RDERF f_*({N}[0])$ are
    quasi-isomorphisms,
  \end{itemize}
  where ${M}$ is a quasi-coherent $\Orb_X$-module such
  that $j^*{M} \cong 0$ and ${N}$ is a quasi-coherent $\Orb_{X'}$ such
  that $j'^*{N} \cong 0$. If $M$ and $N$ are of finite type, then there exists an integer 
  $n\gg 0$ such
  that $M \to i^{[n]}_*(i^{[n]})^*M$ and $N \to i'^{[n]}_*(i'^{[n]})^*N$ are isomorphisms 
  (Lemma \ref{L:fp_thickenings}\itemref{LI:fp_thickenings:ft}). Now Lemma 
  \ref{L:flatness_conc_cl} informs us that ${f_{Z^{[n]}} \colon Z'^{[n]} \to Z^{[n]}}$ is an 
  isomorphism, $i^{[n]}$ and $f$ are tor-independent, and $M$ is $f$-flat. This immediately proves the claims 
  when $M$ and $N$ are of finite type. But every quasi-coherent sheaf on $X$ or 
  $X'$ is a directed limit of its quasi-coherent subsheaves of finite type \cite{rydh-2014}, so we have the 
  claim in general.

  To see
  that \itemref{PI:MV_SQ_MB:tdqcb}$\implies$\itemref{PI:MV_SQ_MB:tdqc} it is enough to prove that $\LDERF \QCPBK{f}$ is left
  $t$-exact on $\DQCOH[,Z](X)$ and that $\RDERF f_*$ is right
  $t$-exact on $\DQCOH[,Z'](X')$. For the first claim, let
  ${M}$ be a complex in $\DQCOH[,Z]^{\geq 0}(X)$. We may write
  ${M}$ as a homotopy colimit of its truncations $\trunc{\leq
    n}{M}$. Since $\LDERF \QCPBK{f}$ commutes with coproducts and is
  $t$-exact on $\DQCOH[,Z]^b(X)$, it follows that $\LDERF \QCPBK{f}
  {M}\in \DQCOH[,Z']^{\geq 0}(X')$ so $\LDERF \QCPBK{f}$ is
  $t$-exact. Also, if ${N}$ is a complex in $\DQCOH[,Z']^{\leq
    0}(X')$, then since $f$ is concentrated and $X$ is quasi-compact and quasi-separated, there exists an integer $n$ such that $\trunc{>0}\RDERF f_*{N} \to \trunc{>0}\RDERF
  f_*(\trunc{\geq -n}{N}) \homotopic 0$ is a quasi-isomorphism. Hence, $\RDERF f_*$ is $t$-exact. 
\end{proof}
The following theorem generalizes~\cite[Thm.~3.1]{MR1432058} ($f$ affine)
and \cite[App.]{MR0272779} ($f$ affine and flat) and is Theorem 
\ref{MT:glue_timv}\itemref{MTI:glue_timv:qcoh}. 
\begin{theorem}\label{T:MV-QCoh-gluing}
Fix a tor-independent Mayer--Vietoris square as in \eqref{E:MV-square} with $j$ quasi-compact. 
The functors 
\[
\Phi_{\MOD} \colon \MOD(X) \rightleftarrows \MOD(X') \times_{\MOD(U')} \MOD(U) \colon \Psi
\]
where
\[
\Phi_{\MOD}(M) = (f^*M,j^*M,\delta) \quad \mbox{and} \quad \Psi(M',M_U,\alpha) = f_*M'\times_\alpha j_*M_U
\]
and $\delta$ is the canonical isomorphism $j'^*f^*M\cong f_U^*j^*M$, are adjoint. Also, $\Phi_{\MOD}$ preserves tensor products and the restriction of $\Phi_{\MOD}$ to $\QCOH_{f-\mathrm{fl}}(X)$ induces an equivalence of categories
\begin{equation}
\Phi_{\QCOH,f-\mathrm{fl}}\colon \QCOH_{f-\mathrm{fl}}(X)\to \QCOH(X')\times_{\QCOH(U')} \QCOH_{f_U-\mathrm{fl}}(U)\label{TE:MV-QCoh-gluing}
\end{equation}
that preserves short exact sequences. Moreover,
\begin{enumerate}
\item \label{TI:MV-QCoh-gluing:supp}$f^* \colon \QCOH_Z(X) \to \QCOH_{Z'}(X')$ is an equivalence;
\item $\Phi_{\QCOH}$ preserves and reflects
  \begin{enumerate}
  \item \label{TI:MV-QCoh-gluing:zero} zero objects,
  \item \label{TI:MV-QCoh-gluing:surj} surjective homomorphisms and 
  \item \label{TI:MV-QCoh-gluing:ft} modules of finite type; and
  \end{enumerate}
\item $\Phi_{\QCOH,f-\mathrm{fl}}$ preserves and reflects
  \begin{enumerate}
  \item \label{TI:MV-QCoh-gluing:fp} modules of finite presentation and
  \item \label{TI:MV-QCoh-gluing:flat} flat modules.
  \end{enumerate}
\end{enumerate}
\end{theorem}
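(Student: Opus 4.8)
The plan is to reduce everything to the affine case and then lean on the derived-category equivalence provided by Proposition~\ref{P:MV_SQ_MB}. First I would reduce to $X = \spec A$ affine with $i\colon Z \inj X$ a finitely presented closed immersion: the functors $\Phi$ and $\Psi$ and all the claimed properties are smooth-local on $X$ (using Lemma~\ref{L:mv_bc}\itemref{LI:mv_bc:verify} to see the square descends), and $j$ is quasi-compact so such a $Z$ exists. After this reduction $X' = X'\times_X X$ is quasi-affine over $X$ smooth-locally on $X$ by Proposition~\ref{P:qaff_dom}—actually I should be slightly careful here: Proposition~\ref{P:qaff_dom} only gives an étale neighborhood $X''\to X'$ that is quasi-affine over $X$, so I would either argue directly over the quasi-affine $X''$ and then glue along the étale neighborhood $X''\to X'$ using the étale-gluing results, or simply note that for the adjunction and tensor statements no affineness of $X'$ is needed at all, only for the deeper statements. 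The adjunction $(\Phi_\MOD,\Psi)$ is formal: $\Psi$ is manifestly a right adjoint to $\Phi_\MOD$ because $f_*\times_\alpha j_*$ computes the limit of the pullback diagram in $\MOD$, and $\Phi_\MOD$ preserving tensor products is immediate since $f^*$ and $j^*$ are monoidal and taking the triple is done componentwise.

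The heart of the matter is the equivalence \eqref{TE:MV-QCoh-gluing}. Here I would use the decomposition $\QCOH(X) = \QCOH_Z(X) \times_{?} \QCOH(U)$-type philosophy: a quasi-coherent sheaf on $X$ is determined by its restriction to $U$, its "formal completion along $Z$" (an object of $\DQCOH[,Z](X)$), and the gluing datum. By Lemma~\ref{L:supp_flat}, $\QCOH_Z(X)\subseteq \QCOH_{f-\mathrm{fl}}(X)$, so on the $f$-flat subcategory the derived and underived pullbacks agree on the $Z$-supported part (Lemma~\ref{L:der_char_fflat}), and Proposition~\ref{P:MV_SQ_MB}\itemref{PI:MV_SQ_MB:timv}$\implies$\itemref{PI:MV_SQ_MB:dqc} gives that $\RDERF f_*$ and $\LDERF\QCPBK{f}$ are mutually inverse equivalences $\DQCOH[,Z](X)\simeq\DQCOH[,Z'](X')$, and by the $t$-exactness in \itemref{PI:MV_SQ_MB:tdqc} they restrict to an equivalence $\QCOH_Z(X)\simeq \QCOH_{Z'}(X')$—this is exactly claim~\itemref{TI:MV-QCoh-gluing:supp}, and I would prove it first and use it as a black box. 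For full faithfulness of $\Phi_{\QCOH,f-\mathrm{fl}}$: given $N \in \QCOH_{f-\mathrm{fl}}(X)$, I would set up the exact sequence $0 \to N_Z \to N \to j_*j^*N$ (more precisely work with $\ker$ and $\coker$ of $N \to j_*j^*N$, both landing in $\QCOH_Z(X)$ after checking supports) and compute $\Hom$'s termwise, using that $\Hom$ out of or into $\QCOH_Z$-objects is computed by the equivalence \itemref{TI:MV-QCoh-gluing:supp} together with $\Hom$ on $U$. Essential surjectivity is the reverse construction: given a triple $(N', N_U, \alpha)$ with $N'$ arbitrary and $N_U$ being $f_U$-flat, form $N = \Psi(N',N_U,\alpha) = f_*N'\times_\alpha j_*N_U$ and check that $N$ is quasi-coherent, $f$-flat, and that $\Phi$ of it recovers the triple; the key input is that $f^*f_*N' \to N'$ is an isomorphism for $f$-flat-relevant sheaves, which again comes from Proposition~\ref{P:MV_SQ_MB} applied to the $Z$-supported "difference" between $N'$ and $f^*(\text{something on }X)$.

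I expect the main obstacle to be bookkeeping around the non-flatness of $f$ and the non-affineness of $j$ simultaneously—precisely the point the authors flag. Concretely: showing that $\Psi(N',N_U,\alpha)$ is $f$-flat when $N'$ is arbitrary quasi-coherent and $N_U$ is $f_U$-flat. The Milnor-square patching $N = f_*N'\times j_*N_U$ has an exact sequence $0 \to N \to f_*N' \oplus j_*N_U \to (j\circ f_U)_*j'^*N' \to 0$ (or a variant), and I must show $\LDERF\QCPBK{f}$ of this is concentrated in degree $0$; the $Z$-supported kernel/cokernel analysis reduces this to Lemma~\ref{L:flatness_conc_cl}\itemref{LI:flatness_conc_cl:conc} applied to the thickenings $Z^{[n]}$ via Lemma~\ref{L:fp_thickenings}. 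The remaining preservation/reflection statements (zero objects, surjections, finite type, finite presentation, flatness) are then genuinely routine: zero objects and surjections because $f$ is faithfully flat over $Z'$ in the relevant sense and $j$ covers $U$ (so $\{f, j\}$ is jointly "conservative" on the glued data after the equivalence); finite type by Lemma~\ref{L:fp_thickenings}\itemref{LI:fp_thickenings:ft} reducing a finite-type $Z$-supported sheaf to one pushed forward from a thickening $Z^{[n]}$; finite presentation and flatness on the $f$-flat subcategory by faithfully flat descent along $X'\amalg U \to X$, which holds because the glued cover $X' \amalg U \to X$ is an fpqc cover on the relevant part once one knows the completion along $Z$ is seen by $f$.
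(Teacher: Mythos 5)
Your overall route for the central equivalence \eqref{TE:MV-QCoh-gluing} is essentially the paper's: reduce to $X$ affine, handle the case where $X'$ is quasi-affine over $X$ by invoking the $t$-exact equivalence $\DQCOH[,Z](X)\simeq\DQCOH[,Z'](X')$ of Proposition~\ref{P:MV_SQ_MB} together with the Mayer--Vietoris distinguished triangles, and then pass to general $X'$ via Proposition~\ref{P:qaff_dom} and \'etale gluing along $X''\to X'$. Your concern that Proposition~\ref{P:qaff_dom} only produces an \'etale neighborhood is exactly how the paper proceeds (first the quasi-affine case, then a comparison of fibre products of categories using the \'etale-gluing result), and your identification of the key difficulty --- that $\Psi(N',N_U,\alpha)$ must be shown $f$-flat, via the Milnor-type exact sequence and the thickenings $Z^{[n]}$ --- matches the paper's use of Lemmas~\ref{L:fp_thickenings} and~\ref{L:flatness_conc_cl}.

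There is, however, a genuine gap in your treatment of \itemref{TI:MV-QCoh-gluing:flat}, the reflection of flatness. You propose to deduce it ``by faithfully flat descent along $X'\amalg U\to X$''. For a tor-independent Mayer--Vietoris square the morphism $f$ need not be flat --- in Example~\ref{EX:valuation-MV-square} it is a closed immersion --- so $X'\amalg U\to X$ is surjective but not flat, and fpqc descent of flatness does not apply; the hedge ``on the relevant part'' does not repair this, because flatness of $N$ is a condition tested against arbitrary $M\in\QCOH(X)$, not only against $f$-flat ones. The paper's actual argument is a derived-category computation: for $f$-flat $N$ with $j^*N$ and $f^*N$ flat and $M$ arbitrary, one splits $M$ by the triangle $C\to M\to\RDERF j_*j^*M$, kills the contribution of the second term by the projection formula and flatness of $j^*N$, and handles $C\in\DQCOH[,Z](X)$ using the $t$-exactness of $\LDERF\QCPBK{f}$ on $\DQCOH[,Z](X)$ together with the flatness of $f^*N$. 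The paper explicitly flags this item as a substantial strengthening of Moret-Bailly's result, so it should not be filed under ``routine''. Your appeal to ``faithful flatness of $f$ over $Z'$'' for the conservativity statements \itemref{TI:MV-QCoh-gluing:zero}--\itemref{TI:MV-QCoh-gluing:surj} has the same flaw in wording, though there the intended argument (the equivalence $\QCOH_Z(X)\simeq\QCOH_{Z'}(X')$ of \itemref{TI:MV-QCoh-gluing:supp} reflects zero objects) is correct and is what the paper uses.
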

\begin{proof}
  That $\Phi_{\MOD}$ and $\Psi$ are adjoints is clear.
  Hence, to prove that \eqref{TE:MV-QCoh-gluing} is an equivalence, it is 
  enough to show that the unit $M\to \Psi(\Phi_{\MOD}(M))$ and the counit 
  $\Phi_{\MOD}(\Psi(M',M_U,\delta))\to
  (M',M_U,\delta)$ of the adjunction are isomorphisms when restricted to the relevant 
  subcategories. This is smooth local on $X$, so we may assume that $X$ is an affine 
  scheme.

  Until further notice, we will assume that $X'$ is a quasi-compact and quasi-separated 
  algebraic space (even quasi-affine scheme is sufficient). Now the functor
  \[
  \LDERF f^* \colon \DQCOH[,Z](X) \to \DQCOH[,Z'](X')
  \]
  is a $t$-exact equivalence of categories (Proposition \ref{P:MV_SQ_MB}). Thus, we have a 
  Mayer--Vietoris $\DQCOH$-square in the sense of 
  \cite[Def.~5.5]{perfect_complexes_stacks}, which provides some natural 
  distinguished triangles~\cite[Lem.~5.9]{perfect_complexes_stacks} that we now describe.
  \begin{enumerate}
  \item[(i)] For every $M\in\DQCOH(X)$, there is a distinguished triangle:
    \[ 
    \xymatrix@C2pc{%
      M \ar[r]
      & \RDERF j_* \LDERF j^* M \oplus \RDERF f_*\LDERF f^*M \ar[r]
      & \RDERF f_*\LDERF f^*\RDERF j_*\LDERF j^*M\ar[r]
      & M[1]. }
    \]
  \item[(ii)] Conversely, given $M_U \in \DQCOH(U)$, $M' \in
    \DQCOH(X')$ and an isomorphism $\delta \colon \LDERF j'^*M' \to
    \LDERF f_U^*M_U$, we define $M$ by the following distinguished triangle in
    $\DQCOH(X)$: 
    \[
    \xymatrix{%
      M \ar[r]
       & \RDERF j_*M_U \oplus \RDERF f_*M' \ar[rr]^{\begin{psmallmatrix}\eta^f_{\RDERF j_*M_U} & -\alpha\end{psmallmatrix}}
       && \RDERF f_*\LDERF f^*\RDERF j_*M_U\ar[r] & M[1],}
    \]
    where $\alpha \colon \RDERF f_*M' \to \RDERF f_*\LDERF f^*\RDERF j_*M_U$ is the composition:
    \begin{align*}
    \RDERF f_*M' \xrightarrow{\RDERF f_*\eta^{j'}_{M'}}& \RDERF f_*\RDERF j'_*\LDERF  j'^*M'\\
    \xrightarrow{\RDERF f_*\RDERF j'_*\delta}& \RDERF f_*\RDERF j'_*\LDERF f_U^*M_U \cong \RDERF f_*\LDERF f^*\RDERF j_*M_U.
    \end{align*}    
    Then the induced maps $\LDERF j^*M
    \to M_U$ and $\LDERF f^*M \to M'$ are isomorphisms. 
  \end{enumerate}
  Now let $M \in \QCOH_{f-\mathrm{fl}}(X)$; then the distinguished triangle from (i) reduces 
  to the following distinguished triangle:
  \[
  \xymatrix@C2pc{%
      M \ar[r]
      & \RDERF j_* j^*M \oplus \RDERF f_*f^*M \ar[r]
      & \RDERF f_*\LDERF f^*\RDERF j_*j^*M\ar[r]
      & M[1]. }
  \]
  Observe that tor-independent base change \cite[Cor.~4.13]{perfect_complexes_stacks} 
  implies that:
  \[
  \RDERF f_*\LDERF f^*\RDERF j_*j^*M \simeq \RDERF f_*\RDERF j'_* \LDERF f^*_U \LDERF j^*M \simeq \RDERF k_*\LDERF j'^*\LDERF f^*M \simeq \RDERF k_*k^*M
  \]
  where $k=j\circ f_U$.
  Hence, taking the long exact cohomology sequence of the distinguished triangle above, we obtain the following exact sequence:
  \[
  0 \to M \to j_*j^*M \oplus f_*f^*M \to k_*k^*M \to 0.
  \]
  So the natural map $M \to \Psi(\Phi_{\MOD}(M))$ is an 
  isomorphism when $M$ is quasi-coherent and $f$-flat.

  Conversely, given a triple $(M',M_U,\delta)$, where $M' \in \QCOH(X')$ and $M_U \in \QCOH_{f_U-\mathrm{fl}}(U)$, (ii) provides a distinguished
  triangle:
  \[
  \xymatrix{%
    M \ar[r] & \RDERF j_* M_U \oplus \RDERF  f_* M' \ar[r] & \RDERF k_*M'_U\ar[r] & M[1],}
  \]
  such that the induced maps $\LDERF j^*M \to M_U$ and $\LDERF f^*M \to M'$
  are isomorphisms. Since $\Phi(M',M_U,\delta)=\COHO{0}(M)$, it is enough to
  show that $M$ is concentrated in degree $0$. To see this, we have a distinguished triangle:
  \[
  \xymatrix{%
    \COHO{0}(M)[0] \ar[r] & M \ar[r]
    & \trunc{\geq 1}(M) \ar[r] & \COHO{0}(M)[1].}
  \]
  If we apply the $t$-exact functor $\LDERF j^*$ to this triangle, then the third term
  vanishes so $\trunc{\geq 1}(M)\in \DQCOH[,Z](X)$. If we instead apply the
  right $t$-exact functor $\LDERF f^*$ to this triangle, we obtain the
  triangle:
  \[
  \xymatrix{%
    \LDERF f^*\COHO{0}(M)[0] \ar[r] & M'[0] \ar[r]
    & \LDERF f^*\trunc{\geq 1}(M) \ar[r] & \LDERF f^*\COHO{0}(M)[1].}
  \]
  The first two terms are concentrated in degrees $\leq 0$ and the third is
  concentrated in degrees $\geq 1$ since $\DQCOH[,Z](X)\to
  \DQCOH[,Z'](X')$ is a $t$-exact equivalence. It follows that
  $\trunc{\geq 1}(M)\simeq 0$. 

  Hence, we have proven the equivalence \eqref{TE:MV-QCoh-gluing} when $f\colon X' \to X$ is 
  quasi-compact, quasi-separated and representable. We now address the general case. By 
  Proposition \ref{P:qaff_dom}, smooth-locally on $X$ there is an \'etale neighborhood 
  $X''$ of $Z'$ in $X'$ such that the induced composition $w\colon X'' \to X$ is quasi-affine. Let $U''=X''\times_X U$.
  It now follows 
  from the case considered already, as well as \cite[Ex.~1.2]{MR2774654}, that we have 
  equivalences:
  \begin{align*}
  \QCOH_{w-\mathrm{fl}}(X) &\simeq \QCOH(X'') \times_{\QCOH(U'')} \QCOH_{w_U-\mathrm{fl}}(U)\\
    &\simeq \left(\QCOH(X'') \times_{\QCOH(U'')} \QCOH(U') \right)
      \times_{\QCOH(U')}  \QCOH_{w_U-\mathrm{fl}}(U)\\
    &\simeq \QCOH(X')\times_{\QCOH(U')} \QCOH_{w_U-\mathrm{fl}}(U). 
  \end{align*}
  Note that $\QCOH_{f-\mathrm{fl}}(X)\subseteq \QCOH_{w-\mathrm{fl}}(X)$ is
  a full subcategory and that we have an equivalence 
  \[
  \QCOH_{f-\mathrm{fl}}(X)\to \QCOH_{w-\mathrm{fl}}(X)\times_{\QCOH_{w_U-\mathrm{fl}}(U)} \QCOH_{f_U-\mathrm{fl}}(U).
  \]
  It follows that $\Phi_{\QCOH,f-\mathrm{fl}}$ is an equivalence 
  and it preserves short exact sequences.
 
  Now for \itemref{TI:MV-QCoh-gluing:supp}: if $M \in \QCOH_Z(X)$, then $M$ is $f$-flat (Lemma \ref{L:supp_flat}). Hence,
  \[
  M \to \Psi(f^*M,0,0)=f_*f^*M
  \]
  is an isomorphism. Also if $M' \in \QCOH_{Z'}(X')$, then
  \[
  (f^*f_*M',0,0)=\Phi_{\MOD}(f_*M') \to (M',0,0)
  \]
  is an isomorphism and the claim follows.
  
  For \itemref{TI:MV-QCoh-gluing:zero}: the preservation is obvious. For the reflection: if 
  $M \in \QCOH(X)$ and $j^*M\cong 0$, then $M \in \QCOH_Z(X)$. But if $f^*M \cong 0$ 
  too, then $M \cong 0$ by \itemref{TI:MV-QCoh-gluing:supp}.

  For \itemref{TI:MV-QCoh-gluing:surj}: the preservation is because $\Phi_{\MOD}$ 
  admits a right adjoint $\Psi$ and so is right exact. For the reflection: if $u\colon M \to N$ 
  is a morphism in $\QCOH(X)$ and $j^*u$ and $f^*u$ are surjective, then $j^*\coker(u) = 
  0$ and $f^*\coker(u) = \coker(f^*u) = 0$. It follows from 
  \itemref{TI:MV-QCoh-gluing:zero} that $\coker(u) = 0$ and $u$ is surjective.
  
   For \itemref{TI:MV-QCoh-gluing:ft}: the preservation is clear. For the reflection: we may 
   assume that $X$ is affine. Write $M\in \QCOH(X)$ as a filtered union of quasi-coherent subsheaves 
   $M_\lambda$ of finite type. For sufficiently large $\lambda$ we see that 
   $\Phi_{\MOD}(M_\lambda) \to \Phi(M)$ is surjective. By \itemref{TI:MV-QCoh-gluing:surj}, 
   we see that $M_\lambda = M$ and so $M$ is of finite type.

   For \itemref{TI:MV-QCoh-gluing:fp}: the preservation is clear. For the reflection:   Let $M$ be an $f$-flat quasi-coherent $\Orb_X$-module such that $\Phi(M)$ is
  of finite presentation. By  \itemref{TI:MV-QCoh-gluing:ft} we know that $M$ is of finite 
  type. Since we are free to assume that $X$ is affine, there is an exact sequence $0\to 
  K\to \Orb_X^{\oplus n}\to M\to 0$. But $M$ is $f$-flat, so the sequence remains exact after 
  applying $f^*$. Since $\Phi(K)$ is of finite type, so is $K$ and hence $M$ is of finite
  presentation.

  For \itemref{TI:MV-QCoh-gluing:flat}: the preservation is clear. For the reflection: as before, we 
  may assume that $X$ is affine and $f$ is quasi-affine. Let $M \in \QCOH_{f-\mathrm{fl}}(X)$ be such that $\Phi_{\QCOH,f-\mathrm{fl}}(M)$ is flat. Let $N \in \QCOH(X)$. It is sufficient to prove that
  $\trunc{<0}(N\tensor^{\LDERF}_{\Orb_X} M) \simeq 0$. We begin with the following 
  distinguished triangle:
  \[
\xymatrix{  C  \ar[r] & N \ar[r] & \RDERF j_*j^*N \ar[r] & C[1].}
  \]
  Observe that the derived projection formula \cite[Cor.~4.12]{perfect_complexes_stacks} 
  implies that
  \[
  (\RDERF j_*j^*N) \tensor^{\LDERF}_{\Orb_X} M \simeq \RDERF j_*((j^*N)\tensor^{\LDERF}_{\Orb_{U}} j^*M).
  \]
  But $j^*M$ is flat and so we conclude immediately that $\trunc{<0}((\RDERF j_*j^*N) 
  \tensor^{\LDERF}_{\Orb_X} M) \simeq 0$. It remains to prove that $\trunc{<0}(C\tensor^{\LDERF}_{\Orb_X} M) \simeq 0$. To this end, we first note that $\trunc{<0}C \simeq 0$ and $j^*C \simeq 0$. Moreover, 
  \[
  \LDERF f^*(C\tensor^{\LDERF}_{\Orb_X} M) \simeq (\LDERF f^*C) \tensor^{\LDERF}_{\Orb_{X'}} \LDERF f^*M \simeq (\LDERF f^*C) \tensor^{\LDERF}_{\Orb_{X'}} f^*M.
  \]
  By assumption, $f^*M$ is flat and so for all integers $k$ there are isomorphisms:
  \[
  \COHO{k}\bigl((\LDERF f^*C) \tensor^{\LDERF}_{\Orb_{X'}} f^*M\bigr) \cong \COHO{k}(\LDERF f^*C) \tensor_{\Orb_{X'}} f^*M.
  \]
  But $C \in \DQCOH[,Z](X)$, so $\trunc{<0}C \simeq 0$ implies $\trunc{<0}(\LDERF f^*C) \simeq 0$ (Proposition \ref{P:MV_SQ_MB}). Putting this all together, we see that $\trunc{<0}(\LDERF f^*(C \tensor_{\Orb_X}^{\LDERF} M)) \simeq 0$. But $j^*(C\tensor^{\LDERF}_{\Orb_X} M) \simeq 0$, which implies that $\trunc{<0}(C\tensor_{\Orb_X}^{\LDERF} M) \simeq 0$ (Proposition \ref{P:MV_SQ_MB} again). 
\end{proof}
Note that \ref{T:MV-QCoh-gluing}\itemref{TI:MV-QCoh-gluing:flat} gives a vast generalization of \cite[Prop.~4.1(iii)]{MR1432058}, where only the descent of \'etaleness is proved. 
\begin{remark}
  Assume that we are in the situation of Theorem \ref{T:MV-QCoh-gluing}. If $f$ is 
  concentrated, then the Mayer--Vietoris triangle shows
  that the functor $\DQCOH(X)\to \DQCOH(X')\times_{\DQCOH(U')} \DQCOH(U)$ is
  essentially surjective. It is, however, not fully faithful. The reason is
  a well-known fault of the derived category: whereas cones are unique up
  to isomorphism, morphisms between cones are not unique. One way to fix this
  problem is to work with $\infty$-categories. Then one obtains the expected
  equivalence, cf.~\cite[Prop.~5.6]{2014arXiv1404.7483B}.
\end{remark}

We now have a number of corollaries.
\begin{corollary}\label{C:MV-QCoh-gluing:ff}
  Assume that we are in the situation of Theorem \ref{T:MV-QCoh-gluing}.   If
  ${M\in \MOD(X)}$ and $N\in \QCOH_{f-\mathrm{fl}}(X)$, then the natural
  map:
  \[
  \Hom(M,N)\to \Hom(f^*M,f^*N)\times_{\Hom(j'^*f^*M,j'^*f^*N)}
  \Hom(j^*M,j^*N)
  \]
  is bijective.
\end{corollary}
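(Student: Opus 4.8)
The plan is to deduce the statement formally from the adjunction $\Phi_{\MOD}\dashv\Psi$ of Theorem~\ref{T:MV-QCoh-gluing}, rather than from the equivalence $\Phi_{\QCOH,f-\mathrm{fl}}$: the latter applies only to $f$-flat sheaves, whereas here $M$ is an arbitrary quasi-coherent sheaf and only $N$ is $f$-flat, and the adjunction is available on all of $\MOD(X)$.

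First I would use that $\Phi_{\MOD}$ is left adjoint to $\Psi$, so that for any triple $(N',N_U,\alpha)$ there is a natural bijection
\[
\Hom_{\MOD(X)}\bigl(M,\Psi(N',N_U,\alpha)\bigr)\;\cong\;\Hom\bigl(\Phi_{\MOD}(M),(N',N_U,\alpha)\bigr),
\]
the right-hand $\Hom$ taken in the fiber-product category $\MOD(X')\times_{\MOD(U')}\MOD(U)$. Specializing to $(N',N_U,\alpha)=\Phi_{\MOD}(N)=(f^*N,j^*N,\delta_N)$ and invoking the fact, established inside the proof of Theorem~\ref{T:MV-QCoh-gluing}, that the unit $\eta_N\colon N\to\Psi(\Phi_{\MOD}(N))$ is an isomorphism for $N\in\QCOH_{f-\mathrm{fl}}(X)$, I obtain
\[
\Hom(M,N)\;\cong\;\Hom\bigl(M,\Psi(\Phi_{\MOD}(N))\bigr)\;\cong\;\Hom\bigl(\Phi_{\MOD}(M),\Phi_{\MOD}(N)\bigr),
\]
where the first isomorphism is postcomposition with $\eta_N$ and the second is the adjunction.

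Next I would unwind the last term. By definition of the fiber-product category, a morphism $\Phi_{\MOD}(M)=(f^*M,j^*M,\delta_M)\to(f^*N,j^*N,\delta_N)=\Phi_{\MOD}(N)$ is a pair $(a,b)$ with $a\colon f^*M\to f^*N$ in $\MOD(X')$ and $b\colon j^*M\to j^*N$ in $\MOD(U)$ whose pullbacks $j'^*a$ and $f_U^*b$ agree after conjugation by the canonical comparison isomorphisms $\delta_M$, $\delta_N\colon j'^*f^*(-)\cong f_U^*j^*(-)$. Under the canonical identifications $j'^*f^*M=f_U^*j^*M$ and $j'^*f^*N=f_U^*j^*N$ this is exactly the condition cutting out the fiber product $\Hom(f^*M,f^*N)\times_{\Hom(j'^*f^*M,j'^*f^*N)}\Hom(j^*M,j^*N)$. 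Hence the right-hand side of the displayed chain is canonically the target appearing in the corollary.

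The remaining point — requiring a little care rather than any idea — is to check that the composite bijection so constructed is the natural map $\phi\mapsto(f^*\phi,j^*\phi)$. This is a formal consequence of the triangle identities for $\Phi_{\MOD}\dashv\Psi$ together with naturality of $\eta$: for $\phi\colon M\to N$, the morphism $\eta_N\circ\phi\colon M\to\Psi(\Phi_{\MOD}(N))$ has adjunct $\Phi_{\MOD}(\phi)$, whose two components are precisely $f^*\phi$ and $j^*\phi$. I expect this bookkeeping to be the only thing to watch, the rest being a direct combination of the adjunction and the $f$-flat case of Theorem~\ref{T:MV-QCoh-gluing}.
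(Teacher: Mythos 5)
Your proposal is correct and is exactly the paper's argument: the paper's entire proof reads ``Follows from the unit $N\to \Psi(\Phi_{\MOD}(N))$ being an isomorphism,'' and your write-up simply fills in the details of that one line (the adjunction $\Phi_{\MOD}\dashv\Psi$, the identification of $\Hom$ in the fiber-product category with the fiber product of $\Hom$-sets, and the triangle-identity check that the resulting bijection is the natural map). Nothing to add.
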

\begin{proof}
Follows from the unit $N\to \Psi(\Phi_{\MOD}(N))$ being an isomorphism.
\end{proof}
\begin{corollary}\label{C:MV-QCoh-gluing:flatness}
  Assume that we are in the situation of Theorem \ref{T:MV-QCoh-gluing}. If $f$ is flat, 
  then $\Phi_{\QCOH}$ is an equivalence of abelian categories and preserves and reflects flatness.
\end{corollary}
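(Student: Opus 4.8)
The plan is to derive this as an immediate consequence of Theorem~\ref{T:MV-QCoh-gluing}, the point being that flatness of $f$ renders every $f$-flatness hypothesis vacuous. First I would observe that, since $f$ is flat, $\LDERF\QCPBK{f}$ coincides with $f^*$ on all of $\QCOH(X)$, so by Lemma~\ref{L:der_char_fflat} every quasi-coherent $\Orb_X$-module is $f$-flat; in other words $\QCOH_{f-\mathrm{fl}}(X)=\QCOH(X)$. Likewise $f_U\colon U'\to U$ is flat, being a base change of $f$ along $j$, so $\QCOH_{f_U-\mathrm{fl}}(U)=\QCOH(U)$. Therefore the equivalence $\Phi_{\QCOH,f-\mathrm{fl}}$ of~\eqref{TE:MV-QCoh-gluing} is precisely the functor $\Phi_{\QCOH}\colon\QCOH(X)\to\QCOH(X')\times_{\QCOH(U')}\QCOH(U)$, which is hence an equivalence of categories.

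Next I would upgrade this to an equivalence of \emph{abelian} categories. The target fibre product is abelian, with kernels and cokernels computed componentwise, since $j'^*$ and the flat pullback $f_U^*$ are both exact. By Theorem~\ref{T:MV-QCoh-gluing} the functor $\Phi_{\QCOH}$ (the restriction of the additive functor $\Phi_{\MOD}$) preserves short exact sequences; an additive functor preserving short exact sequences preserves kernels and cokernels, hence is exact, and the quasi-inverse of an exact equivalence is automatically exact. This gives that $\Phi_{\QCOH}$ is an equivalence of abelian categories.

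Finally, preservation and reflection of flatness is exactly Theorem~\ref{T:MV-QCoh-gluing}\itemref{TI:MV-QCoh-gluing:flat}, once more using that in this situation the $f$-flat (resp. $f_U$-flat) subcategory over $X$ (resp. $U$) is all of $\QCOH(X)$ (resp. $\QCOH(U)$). There is essentially no obstacle here: the only step requiring a moment's thought is the formal passage from ``preserves short exact sequences'' to ``equivalence of abelian categories'', and even that is covered by the standard fact about additive functors quoted above.
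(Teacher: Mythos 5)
Your argument is correct and is exactly the intended deduction: the paper states this corollary without proof, treating it as immediate from Theorem~\ref{T:MV-QCoh-gluing} once one notes that flatness of $f$ (and hence of $f_U$) makes $\QCOH_{f-\mathrm{fl}}(X)=\QCOH(X)$ and $\QCOH_{f_U-\mathrm{fl}}(U)=\QCOH(U)$. Your additional care in checking that the fibre product category is abelian with componentwise kernels and cokernels (using exactness of $j'^*$ and $f_U^*$) and that an exact equivalence has exact quasi-inverse is a correct filling-in of details the paper leaves implicit.
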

\begin{corollary}[{\cite[Cor.~3.4.3]{MR1432058}}]\label{C:gluing_special}
  Assume that we are in the situation of Theorem \ref{T:MV-QCoh-gluing}.
  Then $M \in \QCOH(X)$ is $f$-flat if and only if $j^*M$ is 
    $f_U$-flat.
\end{corollary}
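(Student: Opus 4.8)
The plan is to deduce the nontrivial implication from the gluing equivalence of Theorem~\ref{T:MV-QCoh-gluing}: I would \emph{manufacture} an $f$-flat sheaf agreeing with $N$ on $X'$ and on $U$, and then compare it with $N$. The other (easy) direction needs nothing new: if $N$ is $f$-flat then, since $f_U$ is the base change of $f$ along the flat morphism $j$, composing pullbacks gives $\LDERF\QCPBK{(f_U)}\,j^*N\simeq j'^*\LDERF\QCPBK{f}N\simeq j'^*f^*N=f_U^*j^*N$, so $j^*N$ is $f_U$-flat by Lemma~\ref{L:der_char_fflat}; this uses neither tor-independence nor quasi-compactness of $j$.

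So assume $j^*N$ is $f_U$-flat and let $\delta\colon j'^*f^*N\xrightarrow{\sim}f_U^*j^*N$ be the canonical isomorphism, so that $(f^*N,j^*N,\delta)$ lies in $\QCOH(X')\times_{\QCOH(U')}\QCOH_{f_U-\mathrm{fl}}(U)$. I would set $\tilde N:=\Psi(f^*N,j^*N,\delta)$. By Theorem~\ref{T:MV-QCoh-gluing}, $\Phi_{\QCOH,f-\mathrm{fl}}$ is an equivalence with quasi-inverse induced by $\Psi$; hence $\tilde N\in\QCOH_{f-\mathrm{fl}}(X)$ and the counit $\Phi_{\MOD}(\tilde N)=\Phi_{\MOD}\Psi(f^*N,j^*N,\delta)\to(f^*N,j^*N,\delta)$ is an isomorphism — this is where the hypothesis that $j^*N$ is $f_U$-flat is used. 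Next, take $u\colon N\to\Psi\Phi_{\MOD}(N)=\tilde N$ to be the unit of the adjunction $\Phi_{\MOD}\dashv\Psi$. By the triangle identity $\epsilon_{\Phi_{\MOD}(N)}\circ\Phi_{\MOD}(u)=\mathrm{id}$, so $\Phi_{\MOD}(u)$ is a section of the isomorphism $\epsilon_{\Phi_{\MOD}(N)}=\epsilon_{(f^*N,j^*N,\delta)}$, hence is itself an isomorphism; in particular $f^*u\colon f^*N\to f^*\tilde N$ and $j^*u\colon j^*N\to j^*\tilde N$ are isomorphisms. It then remains to show $u$ is an isomorphism, whereupon $N\cong\tilde N\in\QCOH_{f-\mathrm{fl}}(X)$ and we are done.

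For that last step I would argue as follows. Since $j^*$ is exact, $\ker u$ and $\coker u$ lie in $\QCOH_Z(X)$, hence are $f$-flat by Lemma~\ref{L:supp_flat} — here quasi-compactness of $j$ and tor-independence of the square are used. Factor $u$ as $N\twoheadrightarrow\im u\hookrightarrow\tilde N$. Applying $f^*$ to $0\to\im u\to\tilde N\to\coker u\to 0$ and using that $\coker u$ is $f$-flat yields a short exact sequence $0\to f^*\im u\to f^*\tilde N\to f^*\coker u\to 0$; as $f^*u$ is an isomorphism factoring through $f^*N\twoheadrightarrow f^*\im u\hookrightarrow f^*\tilde N$, both of these maps are isomorphisms, so $f^*\coker u=0$, and with $j^*\coker u=0$ we get $\coker u=0$ from Theorem~\ref{T:MV-QCoh-gluing}\itemref{TI:MV-QCoh-gluing:zero}. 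Then applying $f^*$ to $0\to\ker u\to N\xrightarrow{u}\tilde N\to 0$ and using that $\tilde N$ is $f$-flat gives $0\to f^*\ker u\to f^*N\xrightarrow{f^*u}f^*\tilde N\to 0$, so $f^*\ker u=0$; with $j^*\ker u=0$ this forces $\ker u=0$, again by Theorem~\ref{T:MV-QCoh-gluing}\itemref{TI:MV-QCoh-gluing:zero}. Hence $u$ is an isomorphism.

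The hard part is exactly this final upgrade. The functor $\Phi_{\MOD}$ is far from an equivalence on all of $\QCOH(X)$, so knowing $\Phi_{\MOD}(u)$ is an isomorphism does not formally imply $u$ is one; what makes the short exact sequences above stay exact after applying $f^*$ is Lemma~\ref{L:supp_flat}, that quasi-coherent sheaves supported on $Z$ are automatically $f$-flat. Notably, no noetherian, excellence, or finite-cohomological-dimension hypothesis is needed.
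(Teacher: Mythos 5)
Your proof is correct and follows essentially the same route as the paper: both form $\tilde N=\Psi(f^*N,j^*N,\delta)$, observe it is $f$-flat, and exploit Lemma~\ref{L:supp_flat} together with the reflection properties of Theorem~\ref{T:MV-QCoh-gluing} to compare $N$ with $\tilde N$ via the unit. The only (harmless) difference is in the endgame: the paper stops after showing $\eta$ is surjective with $f$-flat kernel and concludes by the two-out-of-three property for $f$-flatness in the short exact sequence $0\to\ker\eta\to N\to\tilde N\to 0$, whereas you push on to show the unit is actually an isomorphism.
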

\begin{proof}
  The necessity is clear. For the sufficiency: if $j^*M$ 
  is $f_U$-flat, then $\tilde{M}=\Psi(f^*M,j^*M,\delta)$ is an $f$-flat quasi-coherent sheaf and 
  there is a natural map $\eta \colon M \to \tilde{M}$. Now $j^*\eta$ and $f^*\eta$ are 
  isomorphisms, so $\ker(\eta)$ is $f$-flat (Lemma \ref{L:supp_flat}) and $\eta$ is 
  surjective (Theorem \ref{T:MV-QCoh-gluing}\itemref{TI:MV-QCoh-gluing:surj}). So we have an exact sequence:
  \[
  0 \to \ker(\eta) \to M \to \tilde{M} \to 0
  \]
  and $\ker(\eta)$ and $\tilde{M}$ are $f$-flat. It follows that $M$ is $f$-flat, which gives 
  the sufficiency.
\end{proof}

We now consider Theorem \ref{T:MV-QCoh-gluing} in the context of algebras, or equivalently, affine schemes.
\begin{corollary}\label{C:gluing-of-algebras}
   Assume that we are in the situation of Theorem \ref{T:MV-QCoh-gluing}. 
  The natural functor
  \[
    \Phi_{\AFF,f-\mathrm{fl}}\colon
    \AFF_{f-\mathrm{fl}}(X)\to
    \AFF(X')\times_{\AFF(U')} \AFF_{f_U-\mathrm{fl}}(U),
  \]
  is an equivalence of categories. Moreover, the functor $\Phi_{\AFF}$ preserves and reflects
  \begin{enumerate}
    \item\label{CI:reflects:alg:closed-imm} closed immersions;
    \item\label{CI:reflects:alg:finite} finite morphisms;
    \item\label{CI:reflects:alg:integral} integral morphisms;
    \item\label{CI:reflects:alg:fin-type} morphisms of finite type;
  \end{enumerate}
  and the functor $\Phi_{\AFF,f-\mathrm{fl}}$ preserves and reflects
  \begin{enumerate}[resume]
    \item\label{CI:reflects:alg:fin-pres} morphisms of finite presentation.
  \end{enumerate}
\end{corollary}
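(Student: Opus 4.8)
The plan is to deduce everything from Theorem~\ref{T:MV-QCoh-gluing} by passing from quasi-coherent $\Orb_X$-modules to quasi-coherent $\Orb_X$-algebras. An affine morphism $Y\to X$ is the same thing as a quasi-coherent sheaf of $\Orb_X$-algebras, and under this dictionary $Y\to X$ is $f$-flat exactly when the underlying quasi-coherent module is $f$-flat. So the first step is to observe that $\Phi_{\MOD}$ of Theorem~\ref{T:MV-QCoh-gluing} is a symmetric monoidal functor (it preserves tensor products, as recorded there) with a lax monoidal right adjoint $\Psi$, and that all of $f^*$, $j^*$, $f_*$, $j_*$ carry algebras to algebras and the fibre-product construction $N'\times_\alpha N_U$ of algebras is again an algebra. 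Hence the adjoint equivalence $\Phi_{\QCOH,f-\mathrm{fl}}$ on $f$-flat modules upgrades verbatim to an adjoint equivalence on $f$-flat algebras, giving the equivalence
\[
\Phi_{\AFF,f-\mathrm{fl}}\colon \AFF_{f-\mathrm{fl}}(X)\to \AFF(X')\times_{\AFF(U')}\AFF_{f_U-\mathrm{fl}}(U).
\]
The point worth spelling out here is that the gluing datum on the algebra side is the same datum as on the module side (an isomorphism $\theta$ over $U'$ that is automatically a ring isomorphism once it is a module isomorphism compatible with multiplication), so no new descent statement is needed — one just restricts the module equivalence to the (full, replete) subcategories of algebra objects on both sides.

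Next I would handle the preservation/reflection statements \itemref{CI:reflects:alg:closed-imm}--\itemref{CI:reflects:alg:fin-type}, which concern $\Phi_{\AFF}$ (not merely $\Phi_{\AFF,f-\mathrm{fl}}$). All four properties of a morphism $Y\to X$ of affine type translate into properties of the corresponding algebra homomorphism $\Orb_X\to \mathscr{A}$: a closed immersion is a surjection of algebras; a finite morphism is one where $\mathscr{A}$ is a finite-type $\Orb_X$-module; an integral morphism is one where $\mathscr{A}$ is a filtered colimit of finite $\Orb_X$-subalgebras (equivalently every local section is integral); finite type is one where $\mathscr{A}$ is generated as an algebra by a quasi-coherent submodule of finite type. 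Preservation in each case is immediate because $f^*$ and $j^*$ are exact, symmetric monoidal, and preserve finite-type modules. For reflection I would reduce to $X$ affine and then use exactly the tools already proved for modules: surjectivity of algebra maps is reflected by Theorem~\ref{T:MV-QCoh-gluing}\itemref{TI:MV-QCoh-gluing:surj}, giving \itemref{CI:reflects:alg:closed-imm}; finiteness of $\mathscr{A}$ as a module is reflected by \itemref{TI:MV-QCoh-gluing:ft}, giving \itemref{CI:reflects:alg:finite}; \itemref{CI:reflects:alg:fin-type} follows because if $\mathscr{A}$ admits a quasi-coherent submodule $\mathscr{M}$ of finite type whose images generate $f^*\mathscr{A}$ and $j^*\mathscr{A}$ as algebras, then (enlarging $\mathscr{M}$ using \itemref{TI:MV-QCoh-gluing:ft}/\itemref{TI:MV-QCoh-gluing:surj}) the subalgebra it generates has the same pullbacks as $\mathscr{A}$, hence equals $\mathscr{A}$ by the faithfulness part of Theorem~\ref{T:MV-QCoh-gluing}; and \itemref{CI:reflects:alg:integral} follows from \itemref{CI:reflects:alg:finite} by writing an integral algebra as the filtered union of its finite subalgebras — a section of $\mathscr{A}$ is integral iff it is integral after applying $f^*$ and $j^*$, since a monic relation glues by the module equivalence.

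Finally, \itemref{CI:reflects:alg:fin-pres} is stated for $\Phi_{\AFF,f-\mathrm{fl}}$ and is the natural algebra analogue of Theorem~\ref{T:MV-QCoh-gluing}\itemref{TI:MV-QCoh-gluing:fp}: preservation is clear, and for reflection one works over an affine $X$, uses \itemref{CI:reflects:alg:fin-type} to get that $\mathscr{A}$ is a finitely generated $\Orb_X$-algebra, presents it as $\Orb_X[t_1,\dots,t_n]/\mathscr{I}$, and must show $\mathscr{I}$ is of finite type; since $\mathscr{A}$ is $f$-flat the module $\mathscr{I}$ need not be $f$-flat, but the quotient is, so one applies the same trick as in the proof of \itemref{TI:MV-QCoh-gluing:fp} — the sequence $0\to\mathscr{I}\to\Orb_X[t_\bullet]\to\mathscr{A}\to0$ stays exact after $f^*$ because $\mathscr{A}$ is $f$-flat, $\Phi(\mathscr{I})$ is of finite type because $\Phi(\mathscr{A})$ is finitely presented, and then \itemref{CI:reflects:alg:finite}-style reasoning (i.e.\ \itemref{TI:MV-QCoh-gluing:ft} for the module $\mathscr{I}$) gives that $\mathscr{I}$ is of finite type. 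I expect the main obstacle to be purely bookkeeping rather than mathematical: making sure that in the non-flat and non-quasi-compact-$j$ setting one only ever asserts the equivalence on the $f$-flat subcategories (where Theorem~\ref{T:MV-QCoh-gluing} applies) while the four "absolute" preservation/reflection properties are genuinely statements about $\Phi_{\AFF}$ on all algebras — these are safe because they reduce, via $X$ affine, to the module-level statements \itemref{TI:MV-QCoh-gluing:zero}--\itemref{TI:MV-QCoh-gluing:ft}, which do hold without flatness hypotheses.
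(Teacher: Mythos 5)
Your treatment of the equivalence $\Phi_{\AFF,f-\mathrm{fl}}$ and of items \itemref{CI:reflects:alg:closed-imm}, \itemref{CI:reflects:alg:finite}, \itemref{CI:reflects:alg:fin-type} and \itemref{CI:reflects:alg:fin-pres} follows essentially the paper's route: the algebra structure maps $\Orb_X\to M$ and $M\otimes M\to M$ descend because $\Phi_{\MOD}$ preserves tensor products and Corollary \ref{C:MV-QCoh-gluing:ff} applies to $f$-flat targets, and the reflection statements reduce, over an affine $X$, to the module-level statements of Theorem \ref{T:MV-QCoh-gluing} via filtered unions of finite-type submodules or subalgebras together with reflection of surjectivity. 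One small imprecision in \itemref{CI:reflects:alg:finite}: Theorem \ref{T:MV-QCoh-gluing}\itemref{TI:MV-QCoh-gluing:ft} concerns $\Orb_X$-modules, whereas finiteness of a morphism $Y_1\to Y_2$ in $\AFF(X)$ is finiteness of an $A_2$-module; the fix is the filtered-union-plus-surjectivity argument you use elsewhere, applied to finitely generated $A_2$-submodules, so this is recoverable.

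The genuine gap is in \itemref{CI:reflects:alg:integral}. Your justification --- that a section $b$ of $B$ is integral over $A$ iff it is so after applying $f^*$ and $j^*$, ``since a monic relation glues by the module equivalence'' --- does not work as stated, for three reasons. First, the module equivalence and Corollary \ref{C:MV-QCoh-gluing:ff} only apply when the target is $f$-flat, and here $A$ and $B$ are arbitrary. Second, even granting a gluing device, the monic polynomials witnessing integrality of $f^*b$ over $f^*A$ and of $j^*b$ over $j^*A$ need not restrict to the same polynomial over $U'$, so there is no single datum to glue. Third, the reduction ``write an integral algebra as the filtered union of its finite subalgebras'' begs the question: for a finitely generated $A$-subalgebra $B_\lambda\subseteq B$ one cannot conclude that $\Phi(B_\lambda)$ is finite over $\Phi(A)$, because $f^*$ is not exact, so $\Phi(B_\lambda)\to\Phi(B)$ need not be injective and $\Phi(B_\lambda)$ need not be a subalgebra of the integral algebra $\Phi(B)$. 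The paper circumvents all of this: it passes to the integral closure $B_0$ of $A$ in $B$, notes that $j^*B_0=j^*B$ because $j^*A\to j^*B$ is integral, and writes $B$ as a filtered union of finitely generated $B_0$-subalgebras $B_\lambda$. The key point is that then $j^*(B/B_\lambda)=0$, so $B/B_\lambda$ is $f$-flat by Lemma \ref{L:supp_flat}; hence $f^*B_\lambda\to f^*B$ is injective, $\Phi(B_\lambda)$ really is a finite-type $\Phi(B_0)$-subalgebra of the integral $\Phi(B_0)$-algebra $\Phi(B)$ and therefore finite over $\Phi(B_0)$, and item \itemref{CI:reflects:alg:finite} gives that $B_\lambda$ is finite over $B_0$, whence $B=\bigcup_\lambda B_\lambda$ is integral over $A$. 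You should replace your one-line argument for \itemref{CI:reflects:alg:integral} with an argument along these lines.
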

\begin{proof}
  An $\Orb_X$-algebra structure on an $\Orb_X$-module $M$ is given by
  homomorphisms $\Orb_X\to M$ and $M\otimes_{\Orb_X} M\to M$ satisfying various
  compatibility conditions. If $M$ is $f$-flat, then an algebra structure on
  $\Phi_{\MOD}(M)$ descends to a
  unique algebra structure on $M$ by Corollary \ref{C:MV-QCoh-gluing:ff}.

  That $\Phi_{\AFF}$ preserves all the properties follows by definition.
  To see that $\Phi_{\AFF}$ reflects the properties, we may work fppf-locally on
  $X$ and assume that $X$ is affine and work with the categories of algebras. We let $\Phi 
  = \Phi_{\MOD}$ for the remainder of the proof.

  \itemref{CI:reflects:alg:closed-imm}--\itemref{CI:reflects:alg:finite}
  These statements follow from Theorem~\ref{T:MV-QCoh-gluing}\itemref{TI:MV-QCoh-gluing:surj}--\itemref{TI:MV-QCoh-gluing:ft}.

  \itemref{CI:reflects:alg:integral}
  Let $A\to B$ be a homomorphism of $\Orb_X$-algebras. If $\Phi(A)\to \Phi(B)$
  is integral, then $j^*A\to j^*B$ is
  integral. Thus, if $B_0$ is the integral closure of $A$ in $B$, then
  $j^*B_0=j^*B$. Write $B$ as the filtered union of finitely generated
  $B_0$-subalgebras $B_\lambda\subseteq B$. Since $j^*B_0=j^*(B_\lambda)=j^*B$,
  we have that $B/B_\lambda$ is $f$-flat;
  it follows that $\Phi(B_\lambda)\subseteq \Phi(B)$
  is a $\Phi(B_0)$-subalgebra of finite type. Thus $\Phi(B_\lambda)$ is a finite
  $\Phi(B_0)$-algebra, so $B_\lambda$ is a finite $B_0$-algebra. It follows that
  $B=\bigcup_\lambda B_\lambda$ is integral over $A$.

  \itemref{CI:reflects:alg:fin-type}
  If $A\to B$ is a homomorphism of $\Orb_X$-algebras such that
  $\Phi(A)\to \Phi(B)$ is of finite type, then write $B$ as
  a filtered union of finitely generated $A$-subalgebras $B_\lambda$. For
  sufficiently large $\lambda$, we have that $\Phi(B_\lambda)\to \Phi(B)$
  is surjective, hence so is $B_\lambda\to B$ so $A\to B$ is of finite type.

  \itemref{CI:reflects:alg:fin-pres}
  If $A\to B$ is a homomorphism of $\Orb_X$-algebras such that $\Phi(A)\to
  \Phi(B)$ is of finite presentation, then we
  have already seen that $A\to B$ is of finite type. There is an exact sequence
  $0\to I\to A[x_1,x_2,\dots,x_n]\to B\to 0$ and if $B$ is $f$-flat, then this
  sequence remains exact after applying $f^*$. If in addition $A$ is $f$-flat,
  then we conclude that $I$ is a finitely generated ideal (use
  Lemma~\ref{L:mv_bc}\itemref{LI:mv_bc:mv} and
  Theorem~\ref{T:MV-QCoh-gluing}\itemref{TI:MV-QCoh-gluing:ft}), hence that
  $A\to B$ is of finite presentation.
\end{proof}

\begin{corollary}\label{C:fmv-qaff_gluing}
  Assume that we are in the situation of Theorem \ref{T:MV-QCoh-gluing}. If $f$ is flat, 
  then $\Phi_{\AFF}$ and $\Phi_{\QAFF}$ are equivalences of categories.
\end{corollary}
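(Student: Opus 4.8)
The plan is to deduce this from Corollary \ref{C:gluing-of-algebras} together with Corollary \ref{C:MV-QCoh-gluing:flatness}. When $f$ is flat, every quasi-coherent sheaf on $X$ is $f$-flat, so $\QCOH_{f-\mathrm{fl}}(X) = \QCOH(X)$, $\AFF_{f-\mathrm{fl}}(X) = \AFF(X)$, and similarly over $U$; hence the equivalence $\Phi_{\AFF,f-\mathrm{fl}}$ of Corollary \ref{C:gluing-of-algebras} is exactly $\Phi_{\AFF}$, and we get the statement for $\AFF$ for free. So the real content is the quasi-affine case.

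For $\Phi_{\QAFF}$ I would first establish full faithfulness. A quasi-affine morphism $W \to Y$ is a quasi-compact open immersion $W \inj \bar{W}$ into an affine morphism $\bar{W} = \spec_Y(f_*\Orb_W) \to Y$; equivalently, $W$ is determined by the pair $(\bar{W}, \text{open subset})$, and a morphism of quasi-affine $Y$-schemes over $Y$ is the same as a morphism of the associated affine hulls carrying one open into the other. Given this, full faithfulness of $\Phi_{\QAFF}$ reduces to full faithfulness of $\Phi_{\AFF}$ (already proved) plus the fact that $\Phi_{\Et}$-style data (open subschemes, i.e. open immersions) glue — but here one only needs that taking schematic images commutes with the flat pullbacks $f^*$, $j^*$, which holds because $f$ is flat and $j$ is flat (formation of $f_*\Orb_W$ commutes with the relevant base changes by flatness and the gluing of $\QCOH$-algebras in Corollary \ref{C:gluing-of-algebras}).

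For essential surjectivity, start with a triple $(W', \theta, W_U)$ with $W' \to X'$ and $W_U \to U$ quasi-affine and $\theta\colon W'\times_{X'} U' \cong W_U \times_U U'$. Pass to affine hulls: $\bar{W}' = \spec_{X'}((f')_*\Orb_{W'})$ and $\bar{W}_U = \spec_U((f_U)_*\Orb_{W_U})$, together with the induced isomorphism of their restrictions to $U'$ (using flatness of $j'$ and $f_U$ so that forming the hull commutes with restriction to $U'$). By the affine case, this affine-hull triple glues to an affine $\bar{W} \to X$ with $f^*\bar{W} \cong \bar{W}'$ and $j^*\bar{W} \cong \bar{W}_U$. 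Now $W'$ is a quasi-compact open of $\bar{W}'$ and $W_U$ a quasi-compact open of $\bar{W}_U$, and these opens agree over $U'$; since $\Phi_{\Et}$ (indeed $\Phi_{\Et_c}$) is an equivalence for the weak Mayer--Vietoris square obtained here (Theorem \ref{MT:etale-gluing-for-wmv}), or more elementarily since an open subset of $|\bar{W}|$ is determined by its pullbacks to $|\bar{W}'|$ and $|\bar{W}_U|$ when these agree over $|U' \times_{\bar{W}} \bar{W}'|$, there is a unique open $W \subseteq \bar{W}$ restricting correctly. One checks $W \to X$ is quasi-compact (hence quasi-affine) because $W' \to X'$ is and $f$ is surjective onto a neighborhood of $Z$ while $W_U \to U$ is quasi-compact.

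The main obstacle I expect is the bookkeeping around schematic images / affine hulls: one must verify that forming $\spec_Y(g_*\Orb_{W})$ is compatible with the flat base changes $f^*$ and $j^*$ appearing in the square, and that a quasi-compact open subscheme of an affine scheme is faithfully recovered from its restrictions along an fppf-type cover $|X'| \sqcup |U| \to |X|$ — the latter being precisely the kind of statement for which one wants the topological surjectivity of weak Mayer--Vietoris squares (cf.\ the gluing of $\Et$ in Theorem \ref{MT:etale-gluing-for-wmv}). Once these compatibilities are in hand, everything else is formal manipulation of the fibre-product categories, exactly parallel to the proof of Corollary \ref{C:gluing-of-algebras}.
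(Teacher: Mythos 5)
Your proposal is correct and follows essentially the same route as the paper: the affine case is immediate from Corollary \ref{C:gluing-of-algebras} since flatness of $f$ makes every module $f$-flat, and the quasi-affine case is handled by passing to affine hulls (whose formation commutes with the flat base changes in the square), gluing the hulls, and then gluing the quasi-compact open subsets using that $X'\amalg U\to X$ is universally submersive. The paper carries out the point-set check for fullness ($W_1\subseteq\bar{\alpha}^{-1}(W_2)$ via surjectivity of $X'\amalg U\to X$) and the open-gluing for essential surjectivity exactly as you sketch in your final paragraph.
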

\begin{proof}
  The equivalence of $\Phi_{\AFF}$ follows immediately from Corollary 
  \ref{C:gluing-of-algebras}. For $\QAFF$, we must work a little more. Some notation will 
  be useful: if $W \to Y$ is quasi-affine, then let $\bar{W} \to Y$ denote its affine hull. 
  Note that the formation of $\bar{W} \to Y$ commutes with flat base change on $Y$. 
  Similarly, for a morphism $\alpha\colon W_1 \to W_2$ of quasi-affine schemes over $Y$ 
  we let $\bar{\alpha}$ denote the induced morphism between the affine hulls. 

  Now for the faithfulness: let $\alpha$, $\beta \colon W_1 \to W_2$ be morphisms in 
  $\QAFF(X)$ such that $\Phi_{\QAFF}(\alpha) = \Phi_{\QAFF}(\beta)$. By the result for 
  $\AFF$, we see that $\bar{\alpha}=\bar{\beta}$ and the claim follows.

  Next for the fullness: consider quasi-affine $X$-schemes $W_1$ and $W_2$ and a 
  morphism $(\alpha',\alpha_U) \colon \Phi_{\QAFF}(W_1) \to \Phi_{\QAFF}(W_2)$. The result 
  for $\AFF$ implies that there is a morphism $\bar{\alpha} \colon \bar{W}_1 \to 
  \bar{W}_2$ such that $\Phi_{\AFF}(\bar{\alpha}) = (\bar{\alpha'},\bar{\alpha_U})$. It is 
  sufficient to prove that $W_1 \subseteq \bar{\alpha}^{-1}(W_2)$. But this may be 
  checked on points and $X'\amalg U \to X$ is surjective. The claim follows.

  Finally, for the essential surjectivity. Now fix 
  a triple $(W',W_U,\theta)$ in the codomain for 
  $\Phi_{\QAFF}$. This leads to a triple $(\bar{W'},\bar{W_U},\bar{\theta})$ in the 
  codomain of $\Phi_{\AFF}$ that may be glued to an affine $X$-scheme $\bar{W}$. Since 
  $U \subseteq X$ is quasi-compact and $f$ is flat and an isomorphism over $Z$, it is 
  easily verified that $X'\amalg U \to X$ is universally submersive (flatness
  is actually not needed, see Theorem~\ref{C:wMV-is-univ-submersive}).
  In particular, by base 
  changing along $\bar{W} \to X$ we may glue the quasi-compact open subsets $W' 
  \subseteq \bar{W'}$ and $W_U \subseteq \bar{W_U}$ to a quasi-compact open subset $W 
  \subseteq \bar{W}$. This proves the claim.
\end{proof}
We conclude this section with the following generalization of~\cite[Cor.~4.3]{MR0272779}.

\begin{corollary}\label{C:normalization}
  Assume that we are in the situation of Theorem \ref{T:MV-QCoh-gluing}. Let $\eta\colon \Orb_X\to j_*\Orb_U$ and 
  $\eta'\colon \Orb_{X'}\to j'_*\Orb_{U'}$ denote the unit maps.
  \begin{enumerate}
  \item $\eta$ is injective if and only if $\eta'$ is injective.
  \item $\eta$ is integrally closed if and only if $\eta'$ is integrally closed.
  \item If $\overline{X}$ denotes the integral closure of $X$ in $U$, i.e.,
    $\spec_X(\mathcal{A})$ where $\mathcal{A}$ is the integral closure of
    $\Orb_X$ with respect to $\eta$, then $\overline{X}':=\overline{X}\times_X
    X'$ is the integral closure of $X'$ in $U'$ and the square
    of $U\to \overline{X}$ and $\overline{X}'\to \overline{X}$
    is a tor-independent Mayer--Vietoris square.
  \end{enumerate}
\end{corollary}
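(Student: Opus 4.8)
The plan is to reduce all three statements to the gluing equivalence of Theorem~\ref{T:MV-QCoh-gluing}, treating each $\Orb$-algebra in sight as a quasi-coherent sheaf that happens to be $f$-flat. First I would observe that every sheaf under consideration is $f$-flat: $\Orb_X$ and $\Orb_{X'}$ are $f$-flat by definition, and $j_*\Orb_U$ (resp.\ $j'_*\Orb_{U'}$) is $f$-flat because $j^*j_*\Orb_U = \Orb_U$ is $f_U$-flat, so Corollary~\ref{C:gluing_special} applies. Thus the unit maps $\eta$, $\eta'$ and all their sub- and quotient-sheaves live inside the subcategory on which $\Phi_{\QCOH,f-\mathrm{fl}}$ is an equivalence preserving and reflecting short exact sequences, surjections, finite type, and flatness (Theorem~\ref{T:MV-QCoh-gluing}).

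For \textbf{(1)}, I would note that $\Phi_{\MOD}$ carries $\eta$ to the pair $(\eta', j^*\eta)$, and $j^*\eta\colon \Orb_U \to j_U^*j_*\Orb_U$ is an isomorphism (hence injective) since $j$ restricts to an isomorphism over $U$; so injectivity of $\eta$ is equivalent, by reflection/preservation of short exact sequences (equivalently: $\Phi_{\QCOH}$ reflects and preserves zero objects applied to $\ker\eta$, which is $f$-flat by Lemma~\ref{L:supp_flat} since it is supported on $Z$), to injectivity of $\eta'$. For \textbf{(2)}, "$\eta$ integrally closed" means the integral closure of $\Orb_X$ inside $j_*\Orb_U$ equals $\Orb_X$; this is an instance of Corollary~\ref{C:gluing-of-algebras}\itemref{CI:reflects:alg:integral}, which says $\Phi_{\AFF}$ reflects integral morphisms, applied to the inclusion $\Orb_X \hookrightarrow \mathcal{A} \subseteq j_*\Orb_U$ (and its primed analogue), combined with the fact from \textbf{(1)} that the relevant inclusions glue. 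More precisely: form $\mathcal{A}$, the integral closure of $\Orb_X$ in $j_*\Orb_U$, and $\mathcal{A}'$, the integral closure of $\Orb_{X'}$ in $j'_*\Orb_{U'}$; since integral closure commutes with flat base change and is local, the triple $(\mathcal{A}', j^*\mathcal{A}, \text{canonical})$ glues to an $\Orb_X$-algebra which one checks is exactly $\mathcal{A}$, so $\Phi_{\QCOH}(\mathcal{A}) = (\mathcal{A}', j^*\mathcal{A})$ with $j^*\mathcal{A} = j^*j_*\Orb_U = \Orb_U$'s integral closure in itself, i.e.\ $\Orb_U$ again; then $\eta$ integrally closed $\iff$ $\Orb_X = \mathcal{A}$ $\iff$ $\Orb_{X'} = \mathcal{A}'$ (by reflection of the surjection $\mathcal{A} \twoheadrightarrow \mathcal{A}/\Orb_X$, which is supported on $Z$ hence $f$-flat, or directly by Theorem~\ref{T:MV-QCoh-gluing}\itemref{TI:MV-QCoh-gluing:zero}) $\iff$ $\eta'$ integrally closed.

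For \textbf{(3)}, I would let $\mathcal{A}$ be the integral closure of $\Orb_X$ in $j_*\Orb_U$ as above and set $\overline{X} = \spec_X(\mathcal{A})$. The key claim is that $\overline{X}' := \overline{X}\times_X X' = \spec_{X'}(f^*\mathcal{A})$ is the integral closure of $\Orb_{X'}$ in $j'_*\Orb_{U'}$; this follows because $\mathcal{A}$ is $f$-flat (being integrally closed in the $f$-flat sheaf $j_*\Orb_U$, hence a sub-$\Orb_X$-module of an $f$-flat sheaf on an affine—actually one argues $\mathcal{A}$ is $f$-flat by Corollary~\ref{C:gluing_special} since $j^*\mathcal{A} = \Orb_U$ is $f_U$-flat), so $f^*\mathcal{A} = \LDERF f^*\mathcal{A}$ glues with $j^*\mathcal{A} = \Orb_U$ to give a subalgebra of $j'_*\Orb_{U'}$ (via part \textbf{(1)}) which is integral over $\Orb_{X'}$ and integrally closed there (using part \textbf{(2)} applied to the square with $\overline{X}, \overline{X}'$, or by faithfully flat descent of "integrally closed in" along $X' \amalg U \to X$). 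Finally, the square of $U \to \overline{X}$ and $\overline{X}' \to \overline{X}$ is obtained from \eqref{E:MV-square} by base change along $\overline{X} \to X$; since $\overline{X} \to X$ is $f$-flat (as $\mathcal{A}$ is $f$-flat), Lemma~\ref{L:mv_bc}\itemref{LI:mv_bc:mv} shows the base-changed square is again a tor-independent Mayer--Vietoris square, and its open immersion $U\times_X\overline{X} = U \to \overline{X}$ is quasi-compact because $j$ is and $\overline{X}\to X$ is affine. The main obstacle is the verification in \textbf{(3)} that the glued algebra $f^*\mathcal{A}$ really is the \emph{integral closure} and not merely an integral subalgebra containing the right thing: one needs that integral closure is compatible with the flat base change $f$ \emph{on the nose} (it is, since $f$ is flat on $\overline{X}$-points after the reductions, or more cleanly since $\mathcal{A}$ is $f$-flat one has $f^*\mathcal{A} \hookrightarrow f^*(j_*\Orb_U) = j'_*\Orb_{U'}$ by Theorem~\ref{T:MV-QCoh-gluing} and integral-closedness descends by Corollary~\ref{C:gluing-of-algebras}\itemref{CI:reflects:alg:integral}), so this is the step where I would be most careful about the direction of the reflection statements.
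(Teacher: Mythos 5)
Your proposal follows essentially the same route as the paper: both arguments reduce everything to the equivalence of Theorem~\ref{T:MV-QCoh-gluing} and Corollary~\ref{C:gluing-of-algebras}, use Corollary~\ref{C:gluing_special} to see that every sub-$\Orb_X$-algebra of $j_*\Orb_U$ restricting to $\Orb_U$ is automatically $f$-flat, and identify $\mathcal{A}$ and $\mathcal{A}'$ as the maximal elements of matching structures (the paper packages this as a bijection of bounded lattices $\Int(X,U)\to\Int(X',U')$; you phrase it as gluing the triple $(\mathcal{A}',\Orb_U)$ and invoking reflection of integrality plus maximality, which is the same argument, and your part (1) via $\ker\eta\in\QCOH_Z(X)$ is the paper's schematic-closure argument in module language). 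One justification you offer in passing is wrong, however: integral closure does \emph{not} commute with arbitrary flat base change---this failure is exactly the excellence issue that motivates the hypotheses of Theorem~\ref{MT:pushout_fmv}---and in a tor-independent Mayer--Vietoris square $f$ need not even be flat, so ``integral closure commutes with flat base change and is local'' cannot be used to see that $(\mathcal{A}',j^*\mathcal{A})$ glues to $\mathcal{A}$. The correct mechanism is the one you state immediately afterwards and rightly flag as the delicate step: the glued algebra $\Psi(\mathcal{A}',\Orb_U)$ is integral over $\Orb_X$ by reflection (Corollary~\ref{C:gluing-of-algebras}\itemref{CI:reflects:alg:integral}), is a subalgebra of $j_*\Orb_U$ containing $\mathcal{A}$ because $f^*\mathcal{A}\subseteq\mathcal{A}'$, and hence equals $\mathcal{A}$ by maximality of the integral closure, whence $f^*\mathcal{A}=\mathcal{A}'$; with that substitution your proof is complete and matches the paper's.
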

\begin{proof}
  By Corollary \ref{C:gluing-of-algebras}\itemref{CI:reflects:alg:closed-imm},
  there is a bijection of partially ordered sets
  \[
    \Phi\colon \Cl_{f-\mathrm{fl}}(X)\to \Cl(X')\times_{\Cl(U')} \Cl_{f_U-\mathrm{fl}}(U),
  \]
  where $\Cl(X)$ denotes the set of closed substacks $V\inj X$ and
  $\Cl_{f-\mathrm{fl}}$ denotes the subset of closed substacks such that
  $\Orb_V$ is $f$-flat. If we let $\overline{U}$ and
  $\overline{U'}$ denote the schematic closures of $U$ and $U'$ in $X$ and
  $X'$ respectively, then $\overline{U}$ is $f$-flat (Corollary \ref{C:gluing_special}) and $\overline{U}$
  corresponds to a triple $(\overline{U}\times_X X',U',U)$ on the right hand
  side. But $\overline{U}$ is minimal among the closed substacks of $X$ that
  contains $U$ and $\overline{U'}$ is minimal among the closed substacks
  of $X'$ that contains $U'$. It follows that 
  $\Phi(\overline{U})=(\overline{U'},U',U)$. Thus, $X=\overline{U}$ if and only
  if $X'=\overline{U'}$. Equivalently, $\eta$ is injective if and only if
  $\eta'$ is injective.

  Similarly, Corollary \ref{C:gluing-of-algebras}\itemref{CI:reflects:alg:integral} induces an equivalence of categories of
  integral morphisms
  \[
    \Phi\colon \Int_{f-\mathrm{fl}}(X)\to \Int(X')\times_{\Int(U')} \Int_{f_U-\mathrm{fl}}(U).
  \]
  If we let $\Int(X,U)$ denote the integral morphisms $W\to X$ such that
  $W|_U\to U$ is an isomorphism and $U$ is schematically dense in $W$, then
  $\Int(X,U)$ is equivalent to the bounded lattice of sub-$\Orb_X$-algebras of
  $j_*\Orb_U$ that are integral over $\Orb_X$. These extensions are
  automatically $f$-flat, since they are $f_U$-flat after restricting to
  $\Orb_U$ (Corollary \ref{C:gluing_special}). We thus obtain a bijection of bounded lattices:
  \[
    \Phi\colon \Int(X,U)\to \Int(X',U').
  \]
  Indeed, the only non-obvious detail is that $U'$ is schematically dense
  in $\Phi(W)=W\times_X X'$ and that $U$ is schematically dense in
  $\Phi^{-1}(W',U',U)$. This follows from the
  previous part since the square 
  \[
  \vcenter{\xymatrix{
      W\times_X U'\ar[r]\ar[d] & W\times_X X'\ar[d] \\
      W\times_X U\ar[r]& W\ar@{}[ul]|\square
    }}
  \]
  is a tor-independent Mayer--Vietoris square (Lemma \ref{L:mv_bc}\itemref{LI:mv_bc:mv}).
  Moreover, the minimal elements of these lattices are $\overline{U}$ and $\overline{U'}$.
  and the maximal elements are $\overline{X}$ and $\overline{X'}$. The result
  follows.
\end{proof}

\section{\'Etale sheaves of sets on stacks}\label{S:etale-sheaves}
In this section we generalize some fundamental results on constructible sheaves
in SGA4 from schemes to algebraic stacks.

Let $X$ be an algebraic stack. We let $\Et(X)$ denote the category of \'etale
representable morphisms $E\to X$. We identify $\Et(X)$ with the category of
cartesian lisse-\'etale sheaves of sets. Under this identification
finitely presented \'etale morphisms correspond to constructible sheaves
of sets.

If $X$ is a quasi-compact and quasi-separated algebraic space or
Deligne--Mumford stack, then there is an \'etale presentation by an affine
scheme. Using this presentation it is easily seen that every \'etale sheaf on
$X$ is a filtered colimit of constructible sheaves. We will now extend this
result to every quasi-compact and quasi-separated algebraic stack.

Recall that if $f\colon X\to Y$ is flat of finite presentation with
geometrically reduced fibers, then there exists a factorization $X\to
\pi_0(X/Y)\to Y$ where the first map has connected fibers and the second
is representable and \'etale~\cite[Thm.~2.5.2]{MR2820394}. This construction
commutes with arbitrary base change on $Y$ and is functorial in $X$.
The following result is due to J.\ Wise.

\begin{proposition}[{\cite[Thm.~4.5]{wise_logmaps}}]
Let $f\colon X\to Y$ be flat of finite presentation with geometrically reduced
fibers (e.g., $f$ smooth, quasi-compact and quasi-separated). If every
\'etale sheaf on $X$ is a filtered colimit of constructible sheaves
(e.g., $X$ is
a quasi-compact and quasi-separated algebraic space), then $f^*\colon
\Et(Y)\to \Et(X)$ admits a left-adjoint $f_!\colon \Et(X)\to \Et(Y)$ with
the following properties:
\begin{enumerate}
\item If $(E\to X)\in \Et(X)$, then the unit induces an $X$-morphism
  $E\to f^*f_!E$.
  This gives a factorization $E\to f_!E\to Y$ of the morphism $E\to X\to Y$
  such that $E\to f_!E$ has geometrically connected fibers.
\item $f_!$ preserves constructible sheaves.
\item $f_!$ commutes with pull-back: $g^*f_!=f'_!g'^*$ for any morphism
  $g\colon Y'\to Y$, where $f'\colon X':=X\times_Y Y'\to Y'$ and
  $g'\colon X'\to X$.
\end{enumerate}
\end{proposition}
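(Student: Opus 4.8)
The plan is to reduce to the $\pi_0$-construction recalled above, which carries all of the geometric content, and then to bootstrap from constructible sheaves to arbitrary \'etale sheaves by a formal filtered-colimit argument, using the hypothesis that every object of $\Et(X)$ is a filtered colimit of objects of $\Et_c(X)$.

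First I would define $f_!$ on $\Et_c(X)$. If $(E\to X)\in\Et_c(X)$, then $E\to X\to Y$ is flat and of finite presentation with geometrically reduced fibers — the geometric fibers of $E\to Y$ are \'etale over the geometrically reduced fibers of $f$ — so $\pi_0(E/Y)\to Y$ is defined and is representable, \'etale, and of finite presentation; put $f_!E:=\pi_0(E/Y)$. Functoriality of $\pi_0$ in its source makes $f_!\colon\Et_c(X)\to\Et_c(Y)$ a functor, which is property~(2) on constructible sheaves. Identifying an $X$-morphism $E\to f^*F=F\times_Y X$ with a $Y$-morphism $E\to F$, the universal property of $\pi_0(E/Y)$ — that every $Y$-morphism from $E$ into an object of $\Et(Y)$ factors uniquely through $E\to\pi_0(E/Y)$ — gives a bijection, natural in both variables,
\[
  \Hom_X(E,f^*F)\;\cong\;\Hom_Y(f_!E,F),\qquad E\in\Et_c(X),\ F\in\Et(Y),
\]
in which $F$ need not be constructible. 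I would also record that $f^*$ is exact, preserves constructible sheaves, and commutes with filtered colimits.

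Next I would extend $f_!$ to $\Et(X)$ and verify the adjunction and property~(1). Writing $E=\varinjlim_\lambda E_\lambda$ with $E_\lambda\in\Et_c(X)$, set $f_!E:=\varinjlim_\lambda f_!E_\lambda$; then for $F\in\Et(Y)$,
\[
  \Hom_X(E,f^*F)=\varprojlim_\lambda\Hom_X(E_\lambda,f^*F)=\varprojlim_\lambda\Hom_Y(f_!E_\lambda,F)=\Hom_Y\bigl(\varinjlim_\lambda f_!E_\lambda,\,F\bigr),
\]
so $f_!\dashv f^*$; in particular $f_!E$ is independent of the chosen presentation and $f_!$ commutes with filtered colimits. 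Since $f^*$ commutes with filtered colimits, the unit $E\to f^*f_!E$ is the colimit of the units $E_\lambda\to f^*f_!E_\lambda$, so property~(1) reduces to the case $E\in\Et_c(X)$: there the unit followed by the projection $f^*f_!E=f_!E\times_Y X\to f_!E$ is the canonical map $E\to\pi_0(E/Y)$, whose geometric fibers are the connected components of the geometric fibers of $E\to Y$. Taking the $E_\lambda$ to be the constructible subsheaves of $E$, so that the transition maps are open immersions, each geometric fiber of $E\to f_!E$ becomes an increasing union of nonempty connected open subschemes, hence stays connected, giving~(1) in general.

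For property~(3), on $E\in\Et_c(X)$ one has, using that $\pi_0$ commutes with arbitrary base change on the target and that $E\times_Y Y'={g'}^*E$,
\[
  g^*f_!E=g^*\pi_0(E/Y)=\pi_0\bigl({g'}^*E/Y'\bigr)=f'_!\,{g'}^*E,
\]
and since $g^*$, ${g'}^*$, $f_!$, $f'_!$ all commute with filtered colimits this propagates to all \'etale sheaves whenever $f'_!$ is defined, e.g.\ when $Y'$ (hence $X'$) is quasi-compact and quasi-separated. I do not expect a genuine obstacle here: essentially all the difficulty is already absorbed into the cited $\pi_0$-construction of~\cite{MR2820394}, and what remains is bookkeeping. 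The points requiring care are that the universal property of $\pi_0$ be applied against \emph{arbitrary}, not merely constructible, \'etale sheaves on $Y$ (so that $f_!\dashv f^*$ holds on the nose); that ``geometrically connected fibers'' survive the filtered colimit; and, if one wants (3) for every $Y'$, that the colimit hypothesis descend to $X'=X\times_Y Y'$.
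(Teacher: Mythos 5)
Your proposal is correct and follows essentially the same route as the paper: define $f_!E=\pi_0(E/Y)$ on constructible sheaves, where the adjunction and base-change compatibility come from the universal property of $\pi_0$, and then extend to all of $\Et(X)$ by filtered colimits. The paper's proof is just a terser version of this; your extra care about the unit, the survival of connected fibers under the colimit, and the hypotheses needed for (3) over a general $Y'$ fills in details the paper leaves implicit.
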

\begin{proof}
For constructible sheaves, it is readily seen that $f_!(E\to X):=(\pi_0(E/Y)\to
Y)$ is a left adjoint of $f^*$ and it commutes with arbitrary base change.  It
remains to extend the construction to non-constructible \'etale sheaves $E\to
X$. If $E=\varinjlim E_\lambda$ is a filtered colimit of constructible sheaves,
then necessarily $f_!E=\varinjlim f_!E_\lambda$.
\end{proof}

We may now generalize \cite[Exp.~IX, Cor.~2.7.2, Prop.~2.14]{MR0354654}
and~\cite[Exp.~XII, Prop.~6.5 (i)]{MR0354654} to quasi-compact and
quasi-separated algebraic stacks.

\begin{proposition}\label{P:qcqs-cons-colimit}
Let $X$ be a quasi-compact and quasi-separated algebraic stack. Then
every \'etale sheaf of sets is a filtered colimit of constructible
sheaves.
\end{proposition}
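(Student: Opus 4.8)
The plan is to descend along a smooth affine atlas and reduce to the case of quasi-compact quasi-separated algebraic spaces, which is already available. Using that $X$ is quasi-compact, choose a smooth surjection $f\colon P\to X$ with $P$ an affine scheme; since $X$ is quasi-separated, the algebraic spaces $R:=P\times_X P$ and $R_2:=P\times_X P\times_X P$ are quasi-compact and quasi-separated. Write $p_0,p_1\colon R\to P$ for the two projections. Because $\Et(-)$ is a stack for the smooth topology, restriction along $f$ identifies $\Et(X)$ with the category of descent data $(E_0,\theta)$, where $E_0\in\Et(P)$ and $\theta\colon p_0^*E_0\xrightarrow{\ \sim\ }p_1^*E_0$ in $\Et(R)$ satisfies the cocycle identity over $R_2$. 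Since finite presentation of an \'etale morphism is fppf-local on the target and stable under arbitrary base change, under this identification the constructible sheaves on $X$ are exactly the descent data with $E_0$ constructible; and a filtered colimit of such descent data is computed on underlying sheaves, as $p_0^*$ and $p_1^*$ preserve colimits. So it suffices to show that every descent datum $(E_0,\theta)$ is a filtered colimit of descent data whose underlying sheaf is constructible.

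Fix $(E_0,\theta)$. Because $P$ is an affine scheme, the algebraic-space case (following SGA4, \cite[Exp.~IX]{MR0354654}) lets us write $E_0=\varinjlim_{i\in I}F_i$ as a filtered colimit of constructible sheaves on $P$; pulling back along $p_0$ and $p_1$, which preserves constructibility, presents $p_0^*E_0$ and $p_1^*E_0$ as the filtered colimits of the constructible sheaves $p_0^*F_i$, respectively $p_1^*F_i$, on $R$. The key input is that constructible sheaves are compact objects of $\Et(R)$ and of $\Et(R_2)$---again the algebraic-space case, and where quasi-separatedness of $X$ is used. Viewing $\theta$ as an isomorphism between the ind-objects $(p_0^*F_i)_i$ and $(p_1^*F_i)_i$ of $\Et(R)$, a standard cofinality argument then produces a cofinal filtered subsystem $I'\subseteq I$ together with isomorphisms $\theta_i\colon p_0^*F_i\xrightarrow{\ \sim\ }p_1^*F_i$ compatible with the transition maps; refining $I'$ once more using compactness over $R_2$, each $(F_i,\theta_i)$ satisfies the cocycle identity, hence is a descent datum with constructible underlying sheaf, and $\varinjlim_{i\in I'}(F_i,\theta_i)=(E_0,\theta)$. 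This exhibits the \'etale sheaf corresponding to $(E_0,\theta)$ as a filtered colimit of constructible sheaves.

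The main obstacle is the extraction in the second paragraph: passing from the isomorphism $\theta$ of ind-objects on $R$ (together with its cocycle datum on $R_2$) to a genuine level-wise system of isomorphisms indexed by a cofinal set. This rests on the familiar fact that an isomorphism of ind-objects with compact components is, after reindexing, induced by a cofinal system of isomorphisms of components; the only delicate point is to carry through simultaneously the index bookkeeping on $P$, the mutual-inverse relation for $\theta$ and $\theta^{-1}$, and the cocycle relation on $R_2$, which is precisely what the compactness of constructible sheaves on the quasi-compact quasi-separated algebraic spaces $R$ and $R_2$ makes possible. (One could instead phrase the whole proof as the commutation of $\mathrm{Ind}(-)$ with the finite $2$-limit of categories computing \'etale descent along $f$, or---in the spirit of the Wise-type statement above---construct the left adjoint $f_!$ and run a codescent argument, but the explicit version seems most transparent.)
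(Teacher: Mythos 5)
Your reduction to descent data along a smooth affine atlas $P\to X$ is fine: constructibility is indeed smooth-local, the relevant fibre products $R$ and $R_2$ are quasi-compact and quasi-separated because $X$ is, and filtered colimits of descent data are computed on underlying sheaves. The problem is the step you yourself flag as the ``main obstacle''. It is \emph{not} a standard fact that an isomorphism of ind-objects with compact components is, after passing to a cofinal subsystem, induced by level-wise \emph{isomorphisms}. What compactness gives you is a representation of $\theta$ by maps $p_0^*F_i\to p_1^*F_{j(i)}$ and of $\theta^{-1}$ by maps $p_1^*F_j\to p_0^*F_{k(j)}$ whose composites agree with transition maps after passing further along the system; none of these level maps need be invertible, and in general they cannot be made so. (Already in $\mathrm{Ind}(\mathrm{FinSet})$: the systems $A_n=\{1,\dots,n\}$ and $B_n=\{1,\dots,2n\}$ have isomorphic colimits but admit no cofinal system of compatible level isomorphisms.) Since a descent datum requires an honest isomorphism satisfying the cocycle condition, and nothing forces $\theta$ to respect the chosen exhaustion of $E_0$ by the $F_i$, the extraction of the $(F_i,\theta_i)$ does not go through. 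In effect this step asserts that $\mathrm{Ind}(-)$ commutes with the descent $2$-limit, which is essentially the proposition itself; to repair it one would need to ``saturate'' a constructible subsheaf of $E_0$ under the groupoid, i.e.\ push subsheaves forward along the smooth projections $R\rightrightarrows P$, and that is exactly the ingredient your argument lacks.

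The paper supplies that ingredient and then argues quite differently. Using Wise's relative connected-components construction $\pi_0(-/X)$ for smooth quasi-compact quasi-separated morphisms, it builds a left adjoint $p_!\colon \Et(P)\to\Et(X)$ of $p^*$ that preserves constructibility. Given $F\in\Et(X)$, one chooses an epimorphism $\coprod_i G_i'\to p^*F$ with $G_i'$ constructible on the affine scheme $P$; by adjunction $\coprod_i p_!G_i'\to F$ is an epimorphism from a coproduct of constructible sheaves \emph{on $X$}. From there the classical argument of SGA4, Exp.~IX, Cor.~2.7.2 applies verbatim: $F$ is the filtered colimit of the coequalizers of $H'\rightrightarrows G_J$ over finite subsets $J$ and quasi-compact (hence constructible) subsheaves $H'\subseteq G_J\times_F G_J$. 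You mention the $f_!$/codescent route only parenthetically; it is in fact the proof, and the descent-of-ind-objects route you develop in its place does not close.
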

\begin{proof}
The result is known for affine schemes (and quasi-compact and quasi-separated
schemes). Pick a smooth presentation $p\colon U\to X$ with $U$ affine. Let
$F\to X$ be an \'etale sheaf. Choose an epimorphism $\coprod_{i\in I} G'_i\to
p^*F$ where the $G'_i$ are constructible. Let $G_i=p_!G'_i$ which is a
constructible sheaf. Then $\coprod G_i=p_!(\coprod_i G'_i)\to F$ is an
epimorphism since $\coprod_i G'_i\to p^*p_!(\coprod_i G'_i)\to p^*F$ is an
epimorphism.

The remainder of the proof is standard, cf.~\cite[Exp.~IX,
  Cor.~2.7.2]{MR0354654}. For every finite subset $J\subseteq I$, the coproduct
$G_J=\coprod_{i\in J} G_i$ is constructible. The fiber product
$H_J:=G_J\times_F G_J$ is not constructible but at least quasi-separated since
it is a subsheaf of the constructible sheaf $G_J\times_X G_J$. Consider the
set $\Lambda$ of pairs $(J,H')$ where $J\subseteq I$ is finite and $H'\subseteq
H_J$ is quasi-compact, and hence constructible. For $\lambda=(J,H')\in
\Lambda$, let $F_\lambda=\coker(\equalizer{H'}{G_J})$ which is a constructible sheaf. We
order $\Lambda$ by $(J_1,H'_1)\leq (J_2,H'_2)$ if $J_1\subseteq J_2$ and
$g(H'_1)\subseteq H'_2$ where $g\colon H_{J_1}\to H_{J_2}$. Then
$F=\varinjlim_{\lambda\in\Lambda} F_\lambda$ is a filtered colimit of
constructible sheaves.
\end{proof}

\begin{proposition}\label{P:mono-decomposed}
Let $X$ be a quasi-compact and quasi-separated algebraic stack. Let
$F\in \Et(X)$ be a constructible sheaf of sets. Then there exist finite morphisms
$p_i\colon X'_i\to X$, $i=1,2,\dots,n$ and finite sets $A_1,A_2,\dots,A_n$ and a
monomorphism $F\inj \prod (p_i)_* \underline{A_i}_{X'_i}$.
\end{proposition}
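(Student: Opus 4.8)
The plan is to prove the statement by Noetherian induction on $\dim X$, after a reduction to the excellent case, with normalization doing the essential work. First I would reduce to $X$ reduced, since $\Et(X)$ depends only on $\red X$, and then to $X$ Noetherian and excellent: by Noetherian approximation of algebraic stacks one writes $X$ as a cofiltered limit of algebraic stacks $X_i$ of finite presentation over $\spec\Z$ with affine transition maps, the finitely presented sheaf $F$ descends to some $X_0$, and since pullback along $X\to X_0$ is exact and commutes with pushforward along finite morphisms, a monomorphism for $(X_0,F_0)$ pulls back to one for $(X,F)$. One may therefore assume $X$ is reduced, Noetherian, and excellent, and induct on $\dim X$; the base case $X=\emptyset$ is trivial.

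For the inductive step, let $U\subseteq X$ be the (open) locus where $F$ is locally constant. It is dense: at a generic point $\eta$ the fibre $F\times_X\eta$ is \'etale and, by finite presentation, of finite type over a field, hence finite \'etale, and finite-\'etaleness spreads out to a neighbourhood of $\eta$, over which $F$ is therefore locally constant. Let $\nu\colon \widetilde X\to X$ be the normalization; it is finite because $X$ is excellent, $\widetilde X$ is normal, and $\widetilde U:=\nu^{-1}(U)$ is a dense open of $\widetilde X$ over which $\nu^*F$ is locally constant. I would then pick a finite \'etale surjection $h\colon\widetilde U'\to\widetilde U$ trivializing $\nu^*F|_{\widetilde U}$: over the normal stack $\widetilde U'$ this pullback becomes a disjoint union of constant sheaves along the connected components, hence admits a monomorphism into a single constant sheaf $\underline A_{\widetilde U'}$ for a suitable finite set $A$. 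Now let $\mu\colon\widetilde X'\to\widetilde X$ be the normalization of $\widetilde X$ in $\widetilde U'$: it is finite (excellence again), $\widetilde X'$ is normal and surjects onto $\widetilde X$, and since $h$ is finite one has $\widetilde X'\times_{\widetilde X}\widetilde U=\widetilde U'$, so $\widetilde U'$ is a dense open of $\widetilde X'$ whose complement $\widetilde T:=\widetilde X'\setminus\widetilde U'$ has $\dim\widetilde T<\dim\widetilde X'=\dim X$. Setting $\rho=\nu\mu\colon\widetilde X'\to X$ we obtain a finite surjection with $\rho^{-1}(U)=\widetilde U'$ and $\rho^*F|_{\widetilde U'}\inj\underline A_{\widetilde U'}$.

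Next I would combine the open and closed parts on $\widetilde X'$. Writing $k\colon\widetilde U'\inj\widetilde X'$ and $\ell\colon\widetilde T\inj\widetilde X'$, the product of the two adjunction units is a monomorphism $\rho^*F\inj(k_*k^*\rho^*F)\times(\ell_*\ell^*\rho^*F)$, being an isomorphism on the stalk at every geometric point, each of which lies in $\widetilde U'$ or in $\widetilde T$. By left-exactness of $k_*$ and the identity $k_*\underline A_{\widetilde U'}=\underline A_{\widetilde X'}$ — valid since a locally constant function on a dense open of a normal, hence locally irreducible, stack extends uniquely — one gets $k_*k^*\rho^*F\inj\underline A_{\widetilde X'}$. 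By the inductive hypothesis applied to the constructible sheaf $\ell^*\rho^*F$ on $\widetilde T$ one gets $\ell_*\ell^*\rho^*F\inj\prod_m(\ell r_m)_*\underline{C_m}$ with each $r_m\colon\widetilde T_m\to\widetilde T$ finite. Hence $\rho^*F\inj\underline A_{\widetilde X'}\times\prod_m(\ell r_m)_*\underline{C_m}$; applying the left-exact $\rho_*$, and using that $\rho$ is surjective so that $F\to\rho_*\rho^*F$ is injective, yields $F\inj\rho_*\underline A_{\widetilde X'}\times\prod_m(\rho\,\ell\,r_m)_*\underline{C_m}$, a finite product of pushforwards of finite constant sheaves along finite morphisms (each of $\rho$ and $\rho\ell r_m$ being a composite of finite morphisms). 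This closes the induction and gives the asserted embedding.

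The hard part is exactly this inductive step: the cover trivializing $F$ exists only over the open $U$, and the naive term $j_*\underline A$ is not a pushforward along a finite morphism. The device that repairs this is to replace $X$ by a finite surjection $\rho$ from a normal stack with $\rho^{-1}(U)$ dense — this makes $F\hookrightarrow\rho_*\rho^*F$ a monomorphism and converts $k_*\underline A$ into a genuine constant sheaf — and it is this that forces both the systematic use of normalizations (hence the reduction to excellent $X$) and the recursion on $\dim\widetilde T$. I do not expect a quasi-compactness argument over the loci where individual morphisms are injective on stalks to work, because without separatedness of the \'etal\'e space of $F$ such loci need be neither open nor closed.
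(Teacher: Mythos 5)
Your argument in the reduced, Noetherian, excellent case is essentially sound: it is the classical SGA4-style proof (induction on dimension, normalization, a finite \'etale cover trivializing $F$ over a dense open, and extension of constant sheaves across dense opens of normal stacks), and the individual steps --- density of the locally constant locus, finiteness and normality of the two integral closures, $\dim\widetilde T<\dim X$, the monomorphism $\rho^*F\inj k_*k^*\rho^*F\times\ell_*\ell^*\rho^*F$, and injectivity of $F\to\rho_*\rho^*F$ for $\rho$ finite surjective --- all check out there. The genuine gap is the very first reduction. Absolute Noetherian approximation, i.e.\ writing $X$ as a cofiltered limit with affine transition maps of algebraic stacks of finite presentation over $\spec\Z$, is \emph{not} known for an arbitrary quasi-compact and quasi-separated algebraic stack; it is established for schemes, algebraic spaces and stacks with quasi-finite separated diagonal, but the general case (e.g.\ stacks with positive-dimensional or non-separated stabilizers) is open. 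Since everything downstream in your proof genuinely uses excellence and Noetherian induction (finiteness of normalization, finiteness of the integral closure in $\widetilde U'$, existence of generic points, induction on $\dim X$), the argument as written only proves the proposition for a restricted class of stacks, not in the stated generality.

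The paper's proof is arranged precisely to avoid this. In place of induction on dimension it invokes a stratification of $X$ into locally closed constructible substacks $Y_i$ on which $F$ is locally constant --- available for any constructible sheaf on any quasi-compact and quasi-separated stack --- and notes that $F\to\prod_i (u_i)_*(u_i)^*F$ is already a monomorphism, so no recursion on a closed complement is needed. In place of normalization it takes the integral closure of $X$ with respect to $Y'_i\to Y_i\to \overline{Y_i}\to X$, where $q_i\colon Y'_i\to Y_i$ is a finite \'etale cover trivializing $F|_{Y_i}$; this is always an integral (though perhaps not finite) morphism, with no excellence hypothesis, and the key identity $(v_i)_*\underline{A_i}_{Y'_i}=\underline{A_i}_{X'_i}$ holds for the same ``no new idempotents over a schematically dense open'' reason you use for normal stacks. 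Finally, the integral morphism is written as an inverse limit of finite morphisms and a limit argument replaces it by a finite one. If you either restrict to schemes, algebraic spaces, or stacks with quasi-finite separated diagonal, or else replace your normalizations by integral closures plus this limit argument and your dimension induction by the stratification, your approach becomes a correct proof; as it stands, the approximation step fails for general algebraic stacks.
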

\begin{proof}
There exists a stratification of $X$ into locally closed constructible
substacks $Y_i$ such that $F|_{Y_i}$ is locally
constant~\cite[Prop.~4.4]{MR2774654}. If $u_i\colon Y_i\to X$ denotes the
corresponding quasi-compact immersion, then $F\to \prod
(u_i)_*(u_i)^*F$ is a monomorphism. After refining the stratification, we can
assume that the cardinality of $F|_{Y_i}$ is constant. Let $q_i\colon Y'_i\to
Y_i$ be a finite \'etale surjective morphism such that $q_i^*u_i^*F$ is a
constant sheaf with value $A_i$.

Let $X_i$ be the closure of $Y_i$ and let $p_i\colon X'_i\to X$ be the
integral closure of $X$ with respect to $Y'_i\to Y_i\to X_i\to X$. Then $p_i$ is
integral and $p_i|_{Y_i}=q_i$.  If $v_i\colon Y'_i\to X'_i$ denotes the open
immersion, then $(v_i)_*\underline{A_i}_{Y_i'}=\underline{A_i}_{X_i'}$ is
constant. Thus,
\[
F\to \prod (u_i)_*(u_i)^*F
  \inj \prod (u_i)_*(q_i)_*(q_i)^*(u_i)^*F
  = \prod (p_i)_*\underline{A_i}_{X'_i}
\]
is a monomorphism.

Finally, write $X'_i\to X_i$ as an inverse limit of finite
morphisms~\cite{rydh-2014}. By an easy limit argument, we can replace
$p_i$ by a finite morphism.
\end{proof}

For an algebraic stack $X$, we let $\clopen(X)$ denote the boolean algebra of
closed and open substacks.

\begin{proposition}\label{P:H0-bij-vs-clopen}
Let $h\colon Y\to X$ be a morphism of
algebraic stacks. If $X$ is quasi-compact and quasi-separated, then the following conditions are equivalent.
\begin{enumerate}
\item\label{PI:H0-bij}
For every sheaf of sets $F\in \Et(X)$, the canonical map
\[
H^0(X,F)\to H^0(Y,h^*F)
\]
is bijective.
\item\label{PI:H0-bij:cons}
Condition~\itemref{PI:H0-bij} for constructible sheaves.
\item\label{PI:clopen-bij}
For every finite morphism $f\colon X'\to X$, the canonical map
\[
\clopen(X')\to \clopen(Y\times_X X')
\]
is bijective.
\end{enumerate}
\end{proposition}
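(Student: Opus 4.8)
The plan is to prove the three conditions equivalent by the cycle \itemref{PI:H0-bij}$\implies$\itemref{PI:H0-bij:cons}$\implies$\itemref{PI:clopen-bij}$\implies$\itemref{PI:H0-bij}. The first implication is trivial, so the work is in the other two.

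For \itemref{PI:H0-bij:cons}$\implies$\itemref{PI:clopen-bij}, the key observation is that for a finite (hence representable) morphism $f\colon X'\to X$, the presheaf $\clopen(X'\times_X-)$ on the \'etale site is representable by a \emph{constructible} sheaf of sets on $X$. Concretely, $\clopen(X')$ is the set of decompositions $X'=X'_0\amalg X'_1$ into open-and-closed substacks, and these are classified by morphisms $X'\to \underline{\{0,1\}}_X$ in $\Et(X)$, i.e.\ by global sections of the constructible sheaf $(f_*\underline{\{0,1\}}_{X'})$. Thus for any $h\colon Y\to X$ one has $\clopen(X')=H^0(X,f_*\underline{\{0,1\}}_{X'})$ and $\clopen(Y\times_XX')=H^0(Y,h^*f_*\underline{\{0,1\}}_{X'})$ — here I need that $h^*f_*\underline{\{0,1\}}_{X'}$ is the sheaf classifying clopen decompositions of $Y\times_XX'$, which is just proper/finite base change for the pushforward of a locally constant sheaf along a finite morphism, valid on the \'etale site of algebraic stacks. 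Since $f_*\underline{\{0,1\}}_{X'}$ is constructible (finite morphisms are finitely presented after a limit argument, cf.\ the end of the proof of Proposition~\ref{P:mono-decomposed}, so one reduces to the case where $f$ is finitely presented and $f_*$ of a constructible sheaf is constructible), condition \itemref{PI:H0-bij:cons} applies and gives the bijection.

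For \itemref{PI:clopen-bij}$\implies$\itemref{PI:H0-bij}, the idea is to embed an arbitrary constructible $F$ into a product of pushforwards of constant sheaves along finite morphisms and run a two-term resolution argument. By Proposition~\ref{P:mono-decomposed} there is a monomorphism $F\inj G_0:=\prod_{i=1}^n (p_i)_*\underline{A_i}_{X'_i}$ with the $p_i$ finite. Forming the pushout $G_0\amalg_F G_0$ (equivalently, the quotient sheaf $G_0/F$ in the category of sheaves of sets, realized as the cokernel of the two projections $G_0\times_F G_0\rightrightarrows G_0$), one gets an equalizer diagram $F\to G_0\rightrightarrows G_1$ exhibiting $F$ as $\ker(G_0\rightrightarrows G_1)$, and $G_1$ is again a finite product of pushforwards of finite sets along finite morphisms (the fiber product $G_0\times_F G_0$ is a subsheaf of $G_0\times_X G_0$, which has this form, and a finite morphism restricted to an open-and-closed substack is still finite; one may need to absorb a further monomorphism via Proposition~\ref{P:mono-decomposed} to make $G_1$ literally of the required shape — this is the same bookkeeping as in \loccit). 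For each building block $(p)_*\underline{A}_{X'}$ with $p$ finite, $H^0(X,(p)_*\underline A_{X'})=\Hom_{\Et(X')}(\ast,\underline A_{X'})$, which for a quasi-compact quasi-separated stack is the set of locally constant $A$-valued functions on $|X'|$, i.e.\ determined by $\clopen(X')$ together with $A$; hence \itemref{PI:clopen-bij} gives that $H^0(X,G_k)\to H^0(Y,h^*G_k)$ is bijective for $k=0,1$. Since $H^0$ is left exact (it preserves kernels of pairs of maps) and $h^*$ is exact, the five-lemma-style diagram chase on $F\to G_0\rightrightarrows G_1$ forces $H^0(X,F)\to H^0(Y,h^*F)$ to be bijective for constructible $F$; passing to filtered colimits via Proposition~\ref{P:qcqs-cons-colimit} (note $H^0$ of a qcqs stack commutes with filtered colimits of sheaves, as $\ast$ is a "finite" object) extends this to all $F$, giving \itemref{PI:H0-bij}.

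\textbf{The main obstacle} I anticipate is the bookkeeping in \itemref{PI:clopen-bij}$\implies$\itemref{PI:H0-bij}: ensuring that $G_1$ (built from $G_0\times_F G_0$) genuinely has the form $\prod(p_i)_*\underline{A_i}$ with $p_i$ finite, rather than merely integral or merely a subsheaf of such a product. The clean way around this is to only demand that $G_1$ \emph{embeds} into a finite product of finite-constant-pushforwards along finite morphisms (which Proposition~\ref{P:mono-decomposed} supplies directly for the constructible sheaf $G_0\times_F G_0$), and then note that for the diagram chase one only needs injectivity of $H^0(X,G_1)\to H^0(Y,h^*G_1)$, which follows from injectivity for the ambient product — so one never needs surjectivity on $G_1$, only on $G_0$. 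A secondary point to get right is the identification $H^0(X,(p)_*\underline{A}_{X'})\cong \{$locally constant maps $|X'|\to A\}$ and its compatibility with the base change to $Y$, which is where quasi-compactness and quasi-separatedness of $X$ (hence of $X'$) is used.
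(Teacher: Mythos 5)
Your overall route is the same as the paper's: the equivalence \itemref{PI:H0-bij}$\iff$\itemref{PI:H0-bij:cons} via Proposition~\ref{P:qcqs-cons-colimit}, and the implication \itemref{PI:clopen-bij}$\implies$\itemref{PI:H0-bij:cons} by embedding $F$ into $G_0=\prod_i (p_i)_*\underline{A_i}$ (Proposition~\ref{P:mono-decomposed}), forming $G_1=G_0\amalg_F G_0$, and running the equalizer diagram chase. Your resolution of your own ``main obstacle'' is exactly the paper's point: one never needs $G_1$ to be a product of pushforwards, only that it be constructible, so that the injectivity of $H^0(X,-)\to H^0(Y,h^*-)$ already established for \emph{all} constructible sheaves (via a monomorphism into a product) applies to it. A cosmetic remark: $G_0\times_F G_0$ does not typecheck for a monomorphism $F\inj G_0$; the clean statement is that monomorphisms are effective in a topos, so $F$ is the equalizer of the two maps $G_0\rightrightarrows G_0\amalg_F G_0$.

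The one genuine gap is in your proof of \itemref{PI:H0-bij:cons}$\implies$\itemref{PI:clopen-bij}: the sheaf $f_*\underline{\{0,1\}}_{X'}$ is \emph{not} constructible for a general finite morphism $f$, because finite morphisms need not be finitely presented. (For a non-finitely-presented closed immersion $i\colon Z\inj X$, the sheaf $i_*\underline{\{0,1\}}_Z$ has two-element stalks exactly over the non-constructible subset $|Z|$, so it cannot be constructible.) Your proposed repair does not work either: the limit argument exhibits $f$ as an inverse limit of finite \emph{finitely presented} morphisms $f_\lambda$, hence $f_*\underline{\{0,1\}}_{X'}$ as a filtered \emph{colimit} of constructible sheaves; to reduce to the finitely presented case you would then need $\clopen(Y\times_X X')=\varinjlim_\lambda \clopen(Y\times_X X'_\lambda)$, which requires $Y$ to be quasi-compact and quasi-separated --- but $Y$ carries no such hypothesis. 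The fix is what the paper does: prove \itemref{PI:H0-bij}$\implies$\itemref{PI:clopen-bij} instead, applying condition~\itemref{PI:H0-bij} directly to the possibly non-constructible sheaf $f_*\underline{\{0,1\}}_{X'}$, together with the identification $H^0(X,f_*\underline{\{0,1\}}_{X'})=\clopen(X')$ and finite (integral) base change $h^*f_*=(f_Y)_*h'^*$, neither of which needs finiteness of presentation. Since you already establish \itemref{PI:H0-bij:cons}$\implies$\itemref{PI:H0-bij} by the colimit argument at the end of your last step, this substitution closes the cycle and the rest of your argument stands.
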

\begin{proof}
The equivalence between~\itemref{PI:H0-bij} and~\itemref{PI:H0-bij:cons}
follows by
Proposition~\ref{P:qcqs-cons-colimit}. That~\itemref{PI:H0-bij} implies~\itemref{PI:clopen-bij} follows by the
following two observations: (a) if $A$ is a two-point set, then $H^0(X,f_*\underline{A}_{X'})=\clopen(X')$, and (b) by finite base change $h^*f_*\underline{A}_{X'}=(f_Y)_*\underline{A}_{Y\times_X X'}$.

To see that~\itemref{PI:clopen-bij} implies~\itemref{PI:H0-bij}, take a monomorphism $F\inj G$ as in
Proposition~\ref{P:mono-decomposed}. Then by~\itemref{PI:clopen-bij}, $H^0(X,G)\to H^0(Y,h^*G)$
is bijective. It follows that $H^0(X,F)\to H^0(Y,h^*F)$ is injective.
Finally, take $H=G\amalg_F G$. Then we have a diagram
\[
\xymatrix{
H^0(X,F)\ar[r]\ar[d] & H^0(X,G)\ar@<.5ex>[r]\ar@<-.5ex>[r]\ar[d] & H^0(X,H)\ar[d]\\
H^0(Y,h^*F)\ar[r] & H^0(Y,h^*G)\ar@<.5ex>[r]\ar@<-.5ex>[r] & H^0(Y,h^*H)
}
\]
with exact rows and injective vertical maps and bijective middle map.
It follows that the left map is bijective.
\end{proof}
We recall the following well-known definition.
\begin{definition}[Henselian pairs]\label{D:henselian}
A pair of algebraic stacks $(X,X_0)$ is
a \emph{henselian pair} if $i\colon X_0\inj X$ is a closed immersion and for
every finite morphism $X'\to X$, the natural map
\[
\clopen(X')\to\clopen(X'\times_X X_0)
\]
is bijective.
\end{definition}
We have the following simple lemma.
\begin{lemma}\label{L:hens_p_int}
Let $(X,X_0)$ be a henselian pair. Let $X'\to X$ be an integral morphism. If $X$ is quasi-compact and quasi-separated, then $(X',X'\times_X X_0)$ is a henselian pair.
\end{lemma}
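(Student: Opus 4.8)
The plan is to reduce the statement to the defining property of the henselian pair $(X,X_0)$ by passing through a cofiltered limit. First, $X'_0 := X'\times_X X_0 \inj X'$ is a closed immersion, being a base change of $i\colon X_0\inj X$. Now fix a finite morphism $g\colon W'\to X'$; since $X'\to X$ is integral, the composite $W'\to X$ is integral, and $W'\times_{X'}X'_0 \cong W'\times_X X_0$. So by Definition~\ref{D:henselian} it suffices to prove the following: \emph{for every integral morphism $V\to X$, the map $\clopen(V)\to \clopen(V\times_X X_0)$ is bijective.} Applying this with $V=W'$ for each finite $W'\to X'$ then verifies that $(X',X'_0)$ is a henselian pair.

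To prove the italicized statement, use that $X$ is quasi-compact and quasi-separated to write $V = \varprojlim_\alpha V_\alpha$ as a cofiltered limit of finite $X$-stacks along finite (hence affine) transition morphisms: every integral $\Orb_X$-algebra is the filtered colimit of its finite-type — and therefore module-finite — quasi-coherent $\Orb_X$-subalgebras \cite{rydh-2014}, and $V = \spec_X$ of such an algebra. Since $X_0\inj X$ is affine, $V\times_X X_0 = \varprojlim_\alpha (V_\alpha\times_X X_0)$ as well, again along affine transition maps, and all stacks in sight are quasi-compact and quasi-separated. The point is that the boolean algebra of clopen substacks commutes with such limits: for a quasi-compact and quasi-separated algebraic stack $W$ one has $\clopen(W)\cong \mathrm{Idem}\,\Gamma(W,\Orb_W)$, for a cofiltered limit $W=\varprojlim_\alpha W_\alpha$ along affine transition maps one has $\Gamma(W,\Orb_W) = \varinjlim_\alpha \Gamma(W_\alpha,\Orb_{W_\alpha})$ (via the relative-$\mathrm{Spec}$ description of the limit, using that $\Gamma$ of a quasi-compact and quasi-separated algebraic stack commutes with filtered colimits of quasi-coherent modules), and idempotents commute with filtered colimits of rings. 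Hence $\clopen(V) = \varinjlim_\alpha \clopen(V_\alpha)$ and $\clopen(V\times_X X_0) = \varinjlim_\alpha \clopen(V_\alpha\times_X X_0)$.

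Finally, since each $V_\alpha\to X$ is finite and $(X,X_0)$ is a henselian pair, the map $\clopen(V_\alpha)\to \clopen(V_\alpha\times_X X_0)$ is bijective by definition; passing to the filtered colimit over $\alpha$ shows that $\clopen(V)\to \clopen(V\times_X X_0)$ is bijective, which is what we needed. The only genuinely non-formal ingredient is the compatibility of $\clopen(-)$ with cofiltered limits of algebraic stacks — the identification $\clopen(W)\cong\mathrm{Idem}\,\Gamma(W,\Orb_W)$ together with the colimit properties of $\Gamma$ on quasi-compact and quasi-separated algebraic stacks; alternatively one may argue topologically, using that $|{\varprojlim_\alpha V_\alpha}| = \varprojlim_\alpha |V_\alpha|$ and that clopen subsets of a cofiltered limit of quasi-compact spaces along quasi-compact maps come from a finite stage.
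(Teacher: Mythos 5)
Your proof is correct and follows essentially the same route as the paper: the paper's one-line argument writes the integral morphism as a limit of finite morphisms (citing the same reference) and invokes ``a simple approximation argument,'' which is exactly what you have made explicit --- reducing to integral $V\to X$, approximating by finite stages, and checking that $\clopen(-)$ commutes with the limit via idempotents of global sections. No gaps.
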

\begin{proof}
  Since $X'\to X$ is a limit of
finite morphisms~\cite{rydh-2014}, the result follows from a simple approximation argument.
\end{proof}

\begin{remark}[Proper base change]\label{R:proper_bc_et}
  Let $(X,X_0)$ be a henselian pair, where $X$ is quasi-compact and quasi-separated. If 
  $g\colon X'\to X$ is \emph{proper and representable},
  then $(X',X'\times_X X_0)$ is a henselian pair (see
  \cite[Cor.~B.4]{MR3148551} and \cite[Cor.~1]{MR1286833}). This
  follows from the existence of the Stein factorization
  $X'\to \spec_{\Orb_X} g_*\Orb_{X'}\to X$ where the first map is
  proper with geometrically connected fibers and the second map is
  integral \cite[\spref{0A1C}]{stacks-project}. This is a baby case of
  the proper base change theorem in \'etale cohomology.
\end{remark}

\section{Mayer--Vietoris squares in \'etale cohomology}\label{S:etale-gluing}
We will now glue \'etale morphisms, or equivalently, \'etale sheaves of
sets. It is thus natural to introduce the following squares which are
analogous to Mayer--Vietoris $\DQCOH$-squares.
  \begin{definition}
    Fix a cartesian square as in \eqref{E:MV-square}. It is a Mayer--Vietoris $\Et$-square if 
    the following conditions are satisfied:
    \begin{enumerate}
    \item the natural transformation $f^*j_*\to j'_*f_U^*$
      is an isomorphism for every cartesian sheaf of sets $F\in\Et(U)$; and
    \item $f^*\colon \Et_{Z}(X)\to\Et_{Z'}(X')$ is 
      an equivalence of categories, where $\Et_Z(X) = \{F \in \Et(X) \suchthat j^*F = 0\}$ and similarly 
      for $\Et_{Z'}(X')$.
    \end{enumerate}
  \end{definition}
  Note that $\Et_Z(X)$ does not depend on the choice of $Z$ and that
  $i_*\colon \Et(Z)\to \Et_Z(X)$ is an equivalence.

  For Mayer--Vietoris $\Et$-squares, gluing is immediate from recollement.

  \begin{theorem}\label{T:gluing-of-etale}
    Consider a Mayer--Vietoris $\Et$-square. Then the functor
    \[
    \Phi_\Et\colon \Et(X)\to \Et(X')\times_{\Et(U')} \Et(U)
    \]
    is an equivalence of categories.
  \end{theorem}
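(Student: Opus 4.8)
The plan is to exploit the classical recollement situation for étale sheaves attached to the closed immersion $i\colon Z\inj X$ with open complement $j\colon U\to X$, and to do the same downstairs and upstairs, then compare via the two hypotheses defining a Mayer--Vietoris $\Et$-square. Recall that $\Et(X)$ is glued from $\Et(U)$ and $\Et(Z)$: every $F\in\Et(X)$ sits in a canonical pullback-type diagram expressing $F$ as the fibre product of $j_*j^*F$ and $i_*i^*F$ over $i_*i^*j_*j^*F$ (this is just the statement that $(j_!, j^*, j_*)$ and $(i^*, i_*, i^!)$ form a recollement of the topos $\Et(X)$ by $\Et(U)$ and $\Et(Z)$). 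Concretely, giving an object of $\Et(X)$ is the same as giving $(F_U\in\Et(U),\ F_Z\in\Et(Z),\ \sigma\colon F_Z\to i^*j_*F_U)$, i.e. $\Et(X)\simeq \Et(U)\times_{\Et(Z),\,i^*j_*}\Et(Z)$, and identically $\Et(X')\simeq \Et(U')\times_{\Et(Z'),\,i'^*j'_*}\Et(Z')$.

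First I would use the identification $i_*\colon\Et(Z)\xrightarrow{\sim}\Et_Z(X)$ (noted just before the theorem) so that hypothesis (2) reads: $f_Z^*\colon\Et(Z)\to\Et(Z')$ is an equivalence, compatibly with $i_*$, $i'_*$ and $f^*$ in the sense that $f^*i_*\simeq i'_*f_Z^*$ (which holds by flat base change since $i$, $i'$ are immersions and the square $Z'\to Z$, $Z'\to X'$ is cartesian). Next I would feed in hypothesis (1), the base-change isomorphism $f^*j_*\xrightarrow{\sim}j'_*f_U^*$ for all $F_U\in\Et(U)$; combined with the (automatic, since $j'$ is an open immersion, hence the square $U'\to U$, $U'\to X'$ is cartesian with $f_U$) base change $f_U^*$ being an equivalence $\Et(U)\to\Et(U')$ — wait: $f_U$ need not be an equivalence, only $f_Z$ is. So instead: on the open part $\Phi_\Et$ restricts to the identity-like comparison $j^*$ vs $j'^*$ and there is nothing to prove, the glued object records an $F_U\in\Et(U)$ directly. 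The point is only to check that the gluing datum transports.

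The core computation is then: start with a triple $(F',\theta,F_U)$ in $\Et(X')\times_{\Et(U')}\Et(U)$, i.e. $F'\in\Et(X')$, $F_U\in\Et(U)$, $\theta\colon j'^*F'\xrightarrow{\sim}f_U^*F_U$. Decompose $F'$ via the recollement upstairs as $(j'^*F',\ i'^*F',\ \sigma'\colon i'^*F'\to i'^*j'_*j'^*F')$. Using hypothesis (2), $i'^*F'$ comes from a unique $F_Z\in\Et(Z)$ via $f_Z^*$, namely $F_Z := (f_Z^*)^{-1}(i'^*F')$. Now I must produce a gluing datum $\sigma\colon i_*F_Z \to i_*i^*j_*F_U$ downstairs whose pullback along $f^*$ recovers $\sigma'$ after transporting $j'^*F'$ to $f_U^*F_U$ via $\theta$; the needed compatibility is exactly $f^*j_* \simeq j'_*f_U^*$ together with $f^*i_* \simeq i'_*f_Z^*$ and the fact that $f_Z^*$, being an equivalence, is faithful and full. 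This defines $F := (F_U, F_Z, \sigma)\in\Et(X)$ with $\Phi_\Et(F)\cong(F',\theta,F_U)$, giving essential surjectivity. For full faithfulness, run the same recollement decomposition on $\Hom$-sets: $\Hom_{\Et(X)}(F,G)$ is the equalizer/fibre product of $\Hom_{\Et(U)}(j^*F,j^*G)$ and $\Hom_{\Et(Z)}(i^*F,i^*G)$ over $\Hom_{\Et(Z)}(i^*F, i^*j_*j^*G)$, and likewise upstairs; the three comparison maps are bijections — the $U$-term tautologically, the $Z$-term because $f_Z^*$ is an equivalence, and the compatibility term because of base change (1) — so $\Phi_\Et$ induces a bijection on $\Hom$'s.

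I expect the only genuinely delicate point to be bookkeeping the naturality of the recollement isomorphisms under $f^*$: one must check that the canonical map $f^*i_*i^*j_*F_U\to i'_*i'^*j'_*f_U^*F_U$ (built from (1) and base change for the closed immersion) is an isomorphism and is compatible with the structural maps $\sigma$, $\sigma'$, so that the gluing data match on the nose rather than merely up to some unspecified automorphism. This is where hypothesis (1) is used in full strength (for the sheaf $i^*j_*F_U$, equivalently $j_*F_U$ restricted to $Z$) and where the faithfulness of $f_Z^*$ is needed to descend $\sigma'$ to a unique $\sigma$. Everything else is formal recollement. A clean way to organize the whole argument is to observe that the recollement equivalences identify both $\Et(X)$ and $\Et(X')$ with iterated fibre products of categories, and that $(\mathrm{id}_{\Et(U)}, f_Z^*, \text{base-change } 2\text{-cell})$ is then a morphism of such diagrams which is objectwise an equivalence, hence induces an equivalence on the fibre-product categories; $\Phi_\Et$ is precisely this induced functor.
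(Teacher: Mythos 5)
Your proposal is correct and follows essentially the same route as the paper: both identify $\Et(X)$ and $\Et(X')$ via recollement with categories of triples $(E_Z,E_U,\psi\colon E_Z\to i^*j_*E_U)$, use hypothesis (2) to identify the closed parts via the equivalence between $\Et(Z)$ and $\Et(Z')$, and use hypothesis (1) to identify the gluing functors $i^*j_*$ and $(f_Z)_*i'^*j'_*f_U^*$. The paper's proof is exactly the ``clean way to organize the whole argument'' you describe in your final paragraph.
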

  \begin{proof}
    By recollement~\cite[Exp.~IV, Thm.~9.5.4]{MR0354652},
    \[
    \Et(X)\cong (\Et(Z),\Et(U),i^*j_*),
    \]
    that is,
    the category $\Et(X)$ is equivalent to the category of triples
    $E_Z\in \Et(Z)$, $E_U\in \Et(U)$, $\psi\colon E_Z\to i^*j_*E_U$.
    Similarly,
    \[
    \Et(X')\cong (\Et(Z'),\Et(U'),i'^*j'_*)
    \]
    and
    \begin{align*}
    \Et(X')\times_{\Et(U')} \Et(U) &\cong (\Et(Z'),\Et(U),i'^*j'_*f_U^*) \\
     &\cong (\Et(Z),\Et(U),(f_Z)_*i'^*j'_*f_U^*)
    \end{align*}
    where we have used that $(f_Z)_*$ is an equivalence of categories.
    Since $(f_Z)_*i'^*j'_*f_U^*=(f_Z)_*i'^*f^*j_*=i^*j_*$ the result follows.
  \end{proof}

  We will now proceed to show that weak Mayer--Vietoris squares are
  Mayer--Vietoris $\Et$-squares. We begin with the following result that
  generalizes~\cite[Cor.~4.4]{MR0272779}.
  \begin{proposition}\label{P:wMV-clopen}
    Fix a weak Mayer--Vietoris square as in \eqref{E:MV-square}. Assume that 
    $(X,Z)$ and $(X',Z')$ are henselian pairs. If $X$, $X'$, $U$ and $U'$ are all quasi-compact and 
    quasi-separated, then the natural map
    \[
    f_U^*\colon \clopen(U)\to \clopen(U')
    \]
    is bijective.
  \end{proposition}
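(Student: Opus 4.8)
The plan is to reduce the claim about clopen subsets to the statement that the natural functor $f_U^*\colon \Et(U) \to \Et(U')$ induces a bijection on global sections, and then to deduce that from the henselian hypotheses together with the structure of the weak Mayer--Vietoris square. Recall that $\clopen(U')$ can be recovered as $H^0(U', f_U^*\underline{A}_U)$ for a two-point set $A$, but more useful here is that $\clopen$ of a quasi-compact quasi-separated stack classifies finite morphisms with a chosen open-and-closed piece; so by Proposition~\ref{P:H0-bij-vs-clopen} (applied with $h = f_U$) it suffices to show that for every finite morphism $g\colon V \to U$ the map $\clopen(V) \to \clopen(V\times_U U')$ is bijective. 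The idea is to spread $V$ out: since $j$ is an open immersion, a finite morphism over $U$ together with the closed complement $Z$ does not immediately give a finite morphism over $X$, so instead I would use the henselian pair $(X,Z)$ directly on the $X$-side and transport along the weak Mayer--Vietoris square.

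First I would use Proposition~\ref{P:qcqs-cons-colimit} to reduce to constructible sheaves: every \'etale sheaf on $U$ is a filtered colimit of constructible sheaves, and the formation of $\clopen$ (equivalently, $H^0$ with two-point coefficients) commutes with filtered colimits of constructible sheaves, so it is enough to prove that $f_U^*$ induces a bijection $H^0(U,F) \to H^0(U', f_U^*F)$ for $F$ constructible. Next, using Proposition~\ref{P:mono-decomposed}, embed $F$ into a finite product $\prod (p_i)_* \underline{A_i}$ with $p_i\colon V_i \to U$ finite; by the diagram-chase at the end of the proof of Proposition~\ref{P:H0-bij-vs-clopen} (exact rows, injective vertical maps, bijective middle map), bijectivity for $F$ follows from bijectivity for sheaves of the form $p_*\underline{A}$ with $p\colon V\to U$ finite. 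For such a sheaf, $H^0(U,p_*\underline{A}_V) = H^0(V,\underline{A}_V)$, which for $A$ a two-point set is $\clopen(V)$, and by finite base change $f_U^* p_*\underline{A}_V = (p')_*\underline{A}_{V'}$ with $V' = V\times_U U'$; so we are reduced to showing $\clopen(V)\to\clopen(V')$ is bijective for every finite $V\to U$.

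Now I would invoke the weak Mayer--Vietoris hypothesis: since $(X,Z)$ is a henselian pair and $j$ has quasi-compact image (hence, by \cite[Prop.~8.2]{rydh-2014}, $U$ is the complement of a finitely presented closed immersion $Z\inj X$), I can form the schematic closure $\bar V$ of $V$ in the total space of a finite morphism extending $p$ — more precisely, take $\bar V \to X$ to be the integral closure of $X$ in $V$, which is integral, restricts to a morphism with the right generic behaviour over $U$, and by Lemma~\ref{L:hens_p_int} makes $(\bar V, \bar V\times_X Z)$ a henselian pair. The key point is that the weak Mayer--Vietoris square, base-changed along $\bar V \to X$ (which stays weak by Lemma~\ref{L:mv_bc}\itemref{LI:mv_bc:w_f_mv}), together with $(X',Z')$ henselian, forces $\bar V\times_X X'$ to be the integral closure of $X'$ in $V'$ and $(\bar V \times_X X', \bar V \times_X Z')$ to be a henselian pair as well; then $\clopen(\bar V) \cong \clopen(\bar V\times_X Z) = \clopen(\bar V\times_X Z') \cong \clopen(\bar V\times_X X')$ using the defining property of henselian pairs and the fact that $f_Z\colon Z'\to Z$ is an isomorphism (so $\bar V\times_X Z = \bar V\times_X Z'$ literally). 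Finally I would cut down from $\bar V$ to $V$: since $V\subseteq \bar V$ is a quasi-compact open with complement contained in the closed fibre over $Z$, and $V' \subseteq \bar V \times_X X'$ likewise, a clopen subset of $V$ extends uniquely to a clopen of $\bar V$ that avoids nothing over $Z$ and conversely, compatibly with passage to $X'$; this matches $\clopen(V)$ with $\clopen(V')$.

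The main obstacle I anticipate is the last bookkeeping step — controlling how $\clopen(V)$ sits inside $\clopen(\bar V)$ and ensuring this identification is genuinely compatible with base change to $X'$, since $V \subseteq \bar V$ need not be dense and the complement of $V$ in $\bar V$ lives entirely over $Z$. The clean way around this is to observe that $\clopen(V)$ already equals $\clopen(\bar V)$: a clopen substack $W\subseteq V$ has schematic closure $\bar W$ in $\bar V$ which is again integral over $X$, is clopen in $\bar V$ because $\bar V\to X$ is integral and $W \to U$ and its complement in $V$ are both clopen over $U$ hence spread to clopen pieces of $\bar V$ by the henselian property of $(\bar V, \bar V\times_X Z)$; this argument runs identically over $X'$, so the square of restriction maps commutes and all four corners are identified. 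With that, bijectivity of $f_U^*\colon\clopen(U)\to\clopen(U')$ follows, completing the proof.
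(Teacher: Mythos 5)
Your overall strategy --- reduce via Proposition~\ref{P:H0-bij-vs-clopen} to showing that $\clopen(V)\to\clopen(V\times_U U')$ is bijective for every finite $V\to U$, extend $V$ to an integral $X$-stack $\bar V$, and compare clopens of $\bar V$, of its closed fibre and of $\bar V\times_X X'$ using the henselian pair property --- is close in spirit to how the paper later deduces the rigidity theorem (Theorem~\ref{T:rigidity-theorem}) \emph{from} this proposition. As a proof of the proposition itself, however, it has a genuine gap at its central step: the claim that the weak Mayer--Vietoris square together with $(X',Z')$ being henselian ``forces $\bar V\times_X X'$ to be the integral closure of $X'$ in $V'$.'' Nothing in your hypotheses yields this. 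What you actually need is that $\clopen(\bar V\times_X X')\to\clopen(V')$ is bijective, which requires $\bar V\times_X X'$ to be integrally closed in $V'$ (idempotents of $\Gamma(V',\Orb_{V'})$ must lift, and lift uniquely because $V'$ is schematically dense). For weak Mayer--Vietoris squares there is in general no such control on the primed side: in Example~\ref{E:weak-but-not-MV}, $X=\spec k[x]$ is integrally closed in $U$, yet $\Orb_{X'}\to j'_*\Orb_{U'}$ is not even injective, so the pullback of the integral closure is nowhere near the integral closure of $X'$ in $U'$. The henselian condition is a statement about clopen decompositions of finite covers and gives no handle on schematic density or integral closures, so invoking it here does not close the gap. (Your justification of the easier identity $\clopen(V)=\clopen(\bar V)$ is also misdirected: it holds because $\bar V$ is integrally closed in $V$ and idempotents are integral, not because of the henselian property of $(\bar V,\bar V\times_X Z)$.)

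The paper supplies exactly the missing ingredient in two moves that your proposal omits. First, after arranging that $Z\inj X$ is finitely presented, it replaces the square by the blow-up square of Lemma~\ref{L:blow-up-is-MV-square}, which is a \emph{tor-independent} Mayer--Vietoris square; the pairs remain henselian because blow-ups are proper and representable (Remark~\ref{R:proper_bc_et}). Second, for tor-independent squares Corollary~\ref{C:normalization} --- which rests on the quasi-coherent gluing Theorem~\ref{T:MV-QCoh-gluing} --- shows that $\overline{X}\times_X X'$ is the integral closure of $X'$ in $U'$ and that the normalized square is again a tor-independent Mayer--Vietoris square. Only then does the chain $\clopen(U)\cong\clopen(\overline{X})\cong\clopen(Z)\cong\clopen(\overline{X}\times_X X')\cong\clopen(U')$ go through. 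A further point: the statement only concerns $\clopen(U)\to\clopen(U')$, so your reduction to all finite $V\to U$ proves a strictly stronger assertion than required; the paper handles those later, in Theorem~\ref{T:rigidity-theorem}, by Zariski's main theorem and base change of the square along $\overline{V}\to X$, reducing back to this proposition rather than the other way around.
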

  \begin{proof}
    Since $X$ and $U$ are quasi-compact and quasi-separated, we may assume that the complement $i\colon Z \inj X$ is finitely presented \cite[Prop.~8.2]{rydh-2014}. Thus, we may replace the square with its blow-up so that it becomes a tor-independent 
    Mayer--Vietoris square (Lemma~\ref{L:blow-up-is-MV-square}). Note
    that $(X',Z')$ and $(X,Z)$ remain henselian pairs (Remark \ref{R:proper_bc_et}).
    
  By Corollary~\ref{C:normalization}, we may replace $X$ and $X'$ by
  $\overline{X}$ and $\overline{X'}$ and assume that $X$ and $X'$ are
  integrally closed with respect to $U$ and $U'$ respectively. Since the open
  and closed subsets of an algebraic stack $W$ are in bijection with
  idempotents of $\Gamma(W,\Orb_W)$, it follows that $\clopen(X)\to \clopen(U)$
  and $\clopen(X')\to \clopen(U')$ are bijections. The corollary thus follows from
  the commutativity of the following diagram:
  \[
  \begin{gathered}[b]
  \xymatrix{
  \clopen(Z)\ar@{=}[d] & \clopen(X)\ar[l]_{\cong}\ar[r]^{\cong}\ar[d] & \clopen(U)\ar[d] \\
  \clopen(Z) & \clopen(X')\ar[l]_{\cong}\ar[r]^{\cong} & \clopen(U').}\\[-\dp\strutbox]
  \end{gathered}
  \qedhere
  \]
  \end{proof}

  We can now prove Gabber's rigidity theorem. For affine henselian
  pairs, this is proven in~\cite[Exp.~20, Thm.~2.1.1]{MR3309086}. See
  Remark~\ref{R:rigidity} for some history of this result.

  \begin{theorem}[Rigidity theorem]\label{T:rigidity-theorem}
    Fix a weak Mayer--Vietoris square as in \eqref{E:MV-square}. Assume that 
    $(X,Z)$ and $(X',Z')$ are henselian pairs. If $X$, $X'$, $U$ and $U'$ are all 
    quasi-compact and quasi-separated, then the natural map:
    \[
    H^0(U,F)\to H^0(U',f_U^*F)
    \]
    is a bijection for all sheaves of sets $F\in\Et(U)$.
  \end{theorem}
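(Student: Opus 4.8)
The plan is to reduce the assertion on $H^0(U,-)$ to a statement about clopen subsets of finite covers of $U$, and then to feed that statement into Proposition~\ref{P:wMV-clopen}. First I would apply Proposition~\ref{P:H0-bij-vs-clopen} to the morphism $f_U\colon U'\to U$ (legitimate since $U$ is quasi-compact and quasi-separated): the bijectivity of $H^0(U,F)\to H^0(U',f_U^*F)$ for all $F\in\Et(U)$ is equivalent to the bijectivity of $\clopen(W)\to\clopen(W\times_U U')$ for every representable finite morphism $g\colon W\to U$. So it suffices to establish this clopen statement for a fixed such $g$.

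Next I would extend $g$ to a representable \emph{integral} morphism $\bar W\to X$ with $\bar W\times_X U=W$. Since $X$ is quasi-separated and $U$ is a quasi-compact open of $X$, the open immersion $j$ is quasi-compact, hence quasi-compact and quasi-separated, so $j_*(g_*\Orb_W)$ is a quasi-coherent $\Orb_X$-algebra. Let $\mathcal A\subseteq j_*(g_*\Orb_W)$ be the integral closure of $\Orb_X$; it is a quasi-coherent integral $\Orb_X$-algebra. As the integral closure is formed locally on $X$ and $g_*\Orb_W$ is finite — hence integral — over $\Orb_U$, restriction to $U$ gives $\mathcal A|_U=g_*\Orb_W$. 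Put $\bar W=\spec_X\mathcal A$.

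Now I would base change the square~\eqref{E:MV-square} along $\bar W\to X$. By Lemma~\ref{L:mv_bc}\itemref{LI:mv_bc:w_f_mv} the result is again a weak Mayer--Vietoris square; its corners are $\bar W$, $\bar W':=\bar W\times_X X'$, $W$ and $W\times_U U'$, with $\bar Z:=\bar W\times_X Z$ the closed complement of $W$ in $\bar W$ and $\bar Z':=\bar W'\times_{X'}Z'$ that of $W\times_U U'$ in $\bar W'$. Since $\bar W\to X$ and its base change $\bar W'\to X'$ are integral and $(X,Z)$, $(X',Z')$ are henselian pairs, Lemma~\ref{L:hens_p_int} shows that $(\bar W,\bar Z)$ and $(\bar W',\bar Z')$ are henselian pairs. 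Moreover $\bar W$, $\bar W'$ are affine over the quasi-compact and quasi-separated stacks $X$, $X'$, and $W$, $W\times_U U'$ are finite over the quasi-compact and quasi-separated stacks $U$, $U'$, so all four corners are quasi-compact and quasi-separated. Proposition~\ref{P:wMV-clopen}, applied to this new square, then gives precisely the bijectivity of $\clopen(W)\to\clopen(W\times_U U')$ that we need.

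I do not expect a genuine obstacle in this deduction: the hard content of the rigidity theorem — the reduction to a tor-independent square by blowing up, the normalization argument behind Corollary~\ref{C:normalization}, and the gluing of quasi-coherent algebras — is already packaged inside Proposition~\ref{P:wMV-clopen}, so here I only need to promote $g$ to an integral $X$-stack and keep track of finiteness and quasi-compactness. The two points deserving a little care are the automatic quasi-compactness of $j$ (which uses quasi-separatedness of $X$, so that intersections of quasi-compact opens are quasi-compact) and the identity $\mathcal A|_U=g_*\Orb_W$; note in particular that, unlike in the proof of Proposition~\ref{P:wMV-clopen}, no blow-up is needed at this stage, because Lemma~\ref{L:mv_bc}\itemref{LI:mv_bc:w_f_mv} already propagates the weak Mayer--Vietoris property under the base change $\bar W\to X$.
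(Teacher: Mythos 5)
Your proof is correct and follows essentially the same route as the paper: reduce via Proposition~\ref{P:H0-bij-vs-clopen} to the bijectivity of $\clopen(W)\to\clopen(W\times_U U')$ for finite $W\to U$, extend $W$ to an integral cover of $X$, base change the square (Lemma~\ref{L:mv_bc}\itemref{LI:mv_bc:w_f_mv}), note the pairs stay henselian (Lemma~\ref{L:hens_p_int}), and conclude by Proposition~\ref{P:wMV-clopen}. The only (harmless) difference is that the paper extends $W\to U$ to a \emph{finite} morphism over $X$ via Zariski's main theorem, whereas you take $\spec_X$ of the integral closure of $\Orb_X$ in $j_*g_*\Orb_W$, which is merely integral; since Lemma~\ref{L:hens_p_int} is stated for integral morphisms, this works equally well.
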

  \begin{proof}
    It is enough to prove that $\clopen(V)\to \clopen(U'\times_U V)$ is
    bijective for every finite morphism $V\to U$
    (Proposition~\ref{P:H0-bij-vs-clopen}). By
    Zariski's main theorem~\cite[Thm.~8.1]{rydh-2014}, we can extend the finite
    morphism $V\to U$ to a finite morphism $\overline{V}\to X$. Since weak
    Mayer--Vietoris squares are stable under arbitrary base change (Lemma~    \ref{L:mv_bc}\itemref{LI:mv_bc:w_f_mv}),
    it is enough to prove
    that $\clopen(U)\to \clopen(U')$ is bijective, which is Proposition~\ref{P:wMV-clopen}.
  \end{proof}

  \begin{corollary}\label{C:wMV-is-MV-Et}
    Fix a weak Mayer--Vietoris square as in \eqref{E:MV-square}. If $j$ is quasi-compact, 
    then it is a Mayer--Vietoris $\Et$-square. 
  \end{corollary}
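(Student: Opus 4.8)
The plan is to verify the two conditions defining a Mayer--Vietoris $\Et$-square; the second is essentially formal, and the first is a stalkwise application of the rigidity theorem (Theorem~\ref{T:rigidity-theorem}). Both conditions concern isomorphisms of \'etale sheaves and equivalences of categories of \'etale sheaves on the four corners of \eqref{E:MV-square}, so by Lemma~\ref{L:mv_bc} together with smooth descent for \'etale sheaves I would first reduce to the case where $X$ is an affine scheme, and then fix a finitely presented closed immersion $i\colon Z\inj X$ with complement $U$ (possible since $j$ is quasi-compact). Since the square is weak Mayer--Vietoris, $f_Z\colon Z'\to Z$ is an isomorphism (take $W=Z$ in the definition), so $Z'$ is an affine scheme; as the stabilizers of $X'$ at points of $Z'$ agree with those of the immersion $Z'\inj X'$ and hence are trivial, $X'$ is an algebraic space in an open neighbourhood of $Z'$ — which is all that the argument below touches.

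For the second condition, recall $i_*\colon\Et(Z)\to\Et_Z(X)$ and $i'_*\colon\Et(Z')\to\Et_{Z'}(X')$ are equivalences, and $i'^*\circ f^*=(f\circ i')^*=(i\circ f_Z)^*=f_Z^*\circ i^*$ since \eqref{E:MV-square} is cartesian; hence $f^*\colon\Et_Z(X)\to\Et_{Z'}(X')$ corresponds under these equivalences to $f_Z^*$, an equivalence because $f_Z$ is an isomorphism.

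For the first condition we must show the base-change map $\beta_F\colon f^*j_*F\to j'_*f_U^*F$ is an isomorphism for every $F\in\Et(U)$. Applying $j'^*$ makes $\beta_F$ the identity of $f_U^*F$ (both $j$ and $j'$ are open immersions), so by recollement $\beta_F$ is an isomorphism if and only if $i'^*\beta_F$ is; using $i'^*f^*=f_Z^*i^*$ and that $f_Z$ is an isomorphism, the latter is equivalent to the map $\alpha_F\colon i^*j_*F\to (f_Z)_*i'^*j'_*f_U^*F$ of \'etale sheaves on $Z$ being an isomorphism. I would check this on geometric stalks. For a geometric point $\bar z\to Z$, with $\bar z'\to Z'$ corresponding under $f_Z^{-1}$, write $X_{(\bar z)}=\spec\Orb_{X,\bar z}^{\mathrm{sh}}$ and $X'_{(\bar z')}=\spec\Orb_{X',\bar z'}^{\mathrm{sh}}$. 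Because $Z$, and hence $Z'$, is finitely presented, the punctured local schemes $X_{(\bar z)}\times_X U$ and $X'_{(\bar z')}\times_{X'}U'=X'_{(\bar z')}\times_X U$ are quasi-compact and quasi-separated, so the continuity of $H^0$ of \'etale sheaves along the cofiltered systems of affine \'etale neighbourhoods gives $(i^*j_*F)_{\bar z}=H^0(X_{(\bar z)}\times_X U,F)$ and $\big((f_Z)_*i'^*j'_*f_U^*F\big)_{\bar z}=H^0(X'_{(\bar z')}\times_X U,F)$, the stalk of $\alpha_F$ being the evident restriction map.

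So it suffices to show this restriction map is bijective, and here I would apply Theorem~\ref{T:rigidity-theorem} to the square with corners $X'_{(\bar z')}\times_X U$, $X'_{(\bar z')}$, $X_{(\bar z)}\times_X U$ and $X_{(\bar z)}$: all four corners are quasi-compact and quasi-separated, and $(X_{(\bar z)},Z\times_X X_{(\bar z)})$ and $(X'_{(\bar z')},Z'\times_{X'}X'_{(\bar z')})$ are henselian pairs, since a strictly henselian local ring modulo any proper ideal is a henselian pair. The step I expect to be the main obstacle is checking that this square is again a \emph{weak} Mayer--Vietoris square. Since $X'\times_X X_{(\bar z)}$ need not be local one localizes it at $\bar z'$, which recovers $X'_{(\bar z')}$; then for $W\to X_{(\bar z)}$ with image in $Z\times_X X_{(\bar z)}$ one checks
\[
W\times_{X_{(\bar z)}}X'_{(\bar z')}\;\cong\;(W\times_X X')\times_{X'\times_X X_{(\bar z)}}X'_{(\bar z')}\;\cong\;W,
\]
where the first isomorphism is a base-change identity, and the second uses that the original square is weak Mayer--Vietoris (so $W\times_X X'\cong W$, with image landing in $Z'\times_X X_{(\bar z)}$) together with the fact that $f_Z$ being an isomorphism makes $Z'\times_X X_{(\bar z)}\cong Z\times_X X_{(\bar z)}$ strictly henselian local, hence absorbed by the localization $X'_{(\bar z')}$. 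Granting this, rigidity yields the required bijection of stalks, so $\alpha_F$ — and therefore $\beta_F$ — is an isomorphism, and the square is a Mayer--Vietoris $\Et$-square.
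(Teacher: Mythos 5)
Your proof is correct and follows essentially the same route as the paper: reduce to $X$ affine, localize at (strict) henselizations of points of $Z$, identify the base-change map on stalks with the restriction $H^0(X_{(\bar z)}\times_X U,F)\to H^0(X'_{(\bar z')}\times_X U,F)$, and invoke Theorem~\ref{T:rigidity-theorem}. You in fact supply several details the paper's terse proof omits---the formal verification of the second condition via $i_*$ and $f_Z^*$, the continuity argument identifying stalks with sections over punctured local schemes, and the check that the henselized square is again a weak Mayer--Vietoris square---all of which are sound.
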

  \begin{proof}
    We need to verify that the natural morphism $f^*j_*\to j'_*f_U^*$ is
    an isomorphism. This equality certainly holds
    over $U'$ since the counits of the adjunctions $(j^*,j_*)$ and $(j'^*,j'_*)$
    are isomorphisms and hence $j'^*j'_*f_U^*=f_U^*$ and
    $j'^*f^*j_*=f_U^*j^*j_*=f_U^*$. It is thus enough to verify the equality
    over points of $Z$. We can first assume that $X$ is affine by working
    smooth-locally on $X$ and then replace $X$ with the henselization at a
    point $z\in Z$. Then $X'$ is Deligne--Mumford in a neighborhood of $Z$ and
    we can thus replace $X'$ with the henselization at $z\in Z$; in particular, $X$ and $X'$ 
    are quasi-compact and quasi-separated. Then
    the equality $f^*j_*=j'_*f_U^*$ becomes $H^0(U,F)=H^0(U',f_U^*F)$, 
    which follows by the rigidity theorem.
  \end{proof}
  We can now prove Theorem \ref{MT:etale-gluing-for-wmv}.
  \begin{proof}[Proof of Theorem \ref{MT:etale-gluing-for-wmv}]
    Combine Corollary \ref{C:wMV-is-MV-Et} with Theorem \ref{T:gluing-of-etale}.
  \end{proof}

  \begin{corollary}\label{C:wMV-is-univ-submersive}
    Fix a weak Mayer--Vietoris square as in \eqref{E:MV-square}. If $j$ is quasi-compact,  
    then $X'\amalg U\to X$ is universally submersive and
    $|X|=|X'|\amalg_{|U'|} |U|$ is a pushout of topological spaces.
  \end{corollary}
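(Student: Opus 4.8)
The plan is to deduce both assertions from Theorem~\ref{MT:etale-gluing-for-wmv}, which gives an equivalence of categories $\Phi_\Et\colon\Et(X)\to\Et(X')\times_{\Et(U')}\Et(U)$, by restricting it to open substacks. First I would observe that an object $E\in\Et(X)$ is an open substack of $X$ if and only if $f^*E\to X'$ and $j^*E\to U$ are monomorphisms: a representable \'etale morphism is an open immersion exactly when it is a monomorphism, the diagonal of $E$ over $X$ pulls back to the diagonal of $f^*E$ over $X'$ and to that of $j^*E$ over $U$, and $f^*$ together with $j^*$ reflect isomorphisms because $\Phi_\Et$ is an equivalence. Since open substacks of an algebraic stack $W$ correspond order-preservingly to open subsets of $|W|$ and an isomorphism between open substacks is unique whenever it exists, restricting $\Phi_\Et$ to monomorphisms yields an order isomorphism
\[
\mathrm{Open}(|X|)\;\xrightarrow{\sim}\;\mathrm{Open}(|X'|)\times_{\mathrm{Open}(|U'|)}\mathrm{Open}(|U|),\qquad V\longmapsto\bigl(f^{-1}(V),\,j^{-1}(V)\bigr),
\]
the target being the set of pairs of opens that restrict to the same open subset of $|U'|$.

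Next I would analyse the canonical continuous map $\pi\colon P\to|X|$ from the pushout $P:=|X'|\amalg_{|U'|}|U|$ in the category of topological spaces, that is, from the set-theoretic pushout equipped with the quotient topology. The underlying map of $X'\amalg U\to X$ is surjective: $j$ is an open immersion with image $|X|\setminus|Z|$, while, the square being weak, $f_Z\colon X'\times_X Z\to Z$ is an isomorphism when $Z$ is given its reduced structure, so $|Z'|\to|Z|$ is a homeomorphism and $|X'|\to|X|$ covers $|Z|$; hence $\pi$ is surjective. For injectivity of $\pi$ I would take two points of $|X'|\amalg|U|$ with the same image in $|X|$ and distinguish cases: two points of $|U|$ coincide since $j$ is injective; a point of $|X'|$ and a point of $|U|$ whose common image lies outside $|Z|$ both lie over $|U'|=|X'|\setminus|Z'|$ and are therefore already identified in $P$ via $f_U$; and two points of $|X'|$ with common image in $|Z|$ coincide because $|Z'|\to|Z|$ is injective. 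Finally, for a subset $W\subseteq|X|$ the set $\pi^{-1}(W)$ is open in $P$ if and only if $f^{-1}(W)$ and $j^{-1}(W)$ are open, in which case the order isomorphism above produces an open substack $V\subseteq X$ with $f^{-1}(|V|)=f^{-1}(W)$ and $j^{-1}(|V|)=j^{-1}(W)$, so that $|V|=W$ by surjectivity; thus $W$ is open. Hence $\pi$ is a homeomorphism, which is precisely the statement $|X|=|X'|\amalg_{|U'|}|U|$, and $X'\amalg U\to X$, whose underlying map is the quotient map onto $P$ followed by the homeomorphism $\pi$, is submersive.

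To obtain ``universally'', I would note that the base change of the square along any $T\to X$ is again a weak Mayer--Vietoris square (Lemma~\ref{L:mv_bc}\itemref{LI:mv_bc:w_f_mv}) with quasi-compact open immersion $j_T$, and that $(X'\amalg U)\times_X T=(X'\times_X T)\amalg(U\times_X T)$, so the submersiveness just established applies to the base-changed square. I do not anticipate a genuine obstacle; the points requiring care are the identifications of the underlying space of $X'\times_X Z$ with $f^{-1}(|Z|)$ and of $X'\times_X U$ with $|X'|\setminus|Z'|$, and the elementary description of pushouts in the category of topological spaces.
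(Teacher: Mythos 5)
Your proposal is correct and follows essentially the same route as the paper: both deduce from the \'etale gluing theorem a bijection $\Op(X)\to\Op(X')\times_{\Op(U')}\Op(U)$ by identifying open substacks with the monomorphisms in $\Et(X)$, handle the underlying sets via the isomorphism $f_Z\colon Z'\to Z$, and reduce the universal statement to base-change stability of weak Mayer--Vietoris squares. The only cosmetic difference is that you detect monomorphy via the diagonal and full faithfulness of $\Phi_{\Et}$, whereas the paper checks it pointwise using surjectivity of $|X'|\amalg|U|\to|X|$; both are valid.
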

  \begin{proof}
    Since weak Mayer--Vietoris squares are preserved under arbitrary base
    change it is enough to prove the latter statement. Set-theoretically,
    $|X|=|X'|\amalg_{|U'|} |U|$ holds since $f_Z \colon Z' \to Z$ is an isomorphism.
    It is thus enough to prove that $|X|$ has the correct topology. Now a morphism of stacks is an open immersion if and only if it is an \'etale
    monomorphism. That an \'etale morphism is a monomorphism can be checked
    pointwise; thus, we have a bijection
    \[
    \Phi_{\Op}\colon \Op(X)\to \Op(X')\times_{\Op(U')} \Op(U).
    \]
    It follows that a subset $W\subseteq |X|$ is open if and only if
    $j^{-1}(W)$ and $f^{-1}(W)$ are open.
  \end{proof}

  \begin{remark}\label{R:rigidity}
    The rigidity theorem holds more generally for cohomology as well. Fix a weak Mayer--Vietoris square as in \eqref{E:MV-square}
    and assume
    that $(X,Z)$ and $(X',Z')$ are affine henselian pairs. If $n=0$
    (resp.\ $n\leq 1$, resp.\ $n$ an integer), then
    \[
    H^n(U,F)\to H^n(U',F)
    \]
    is a bijection for all sheaves of sets $F\in\Et(U)$ (resp.\ sheaves of
    ind-finite groups, resp.\ sheaves of torsion abelian groups). When $X$ is
    noetherian, this is Gabber--Fujiwara's rigidity
    theorem~\cite[Cor.~6.6.4]{MR1360610}. For $n=0,1$, this was extended
    to non-noetherian schemes by Gabber~\cite[Thm.~7.1]{gabber-2005a},
    cf.\ \cite[Exp.~20, Thm.~2.1.1]{MR3309086}. For $n\geq 2$, the
    non-noetherian case is
    sketched by Gabber in~\cite[Exp.~20, \S4.4, CTC]{MR3309086}.  Note that
    the general case reduces to the case where $X'$ is the completion of $X$
    in $Z$. Indeed, such a completion is a weak Mayer--Vietoris square and
    the completions
    of $X$ in $Z$ and $X'$ in $Z$ are equal by definition.
  \end{remark}  

\section{Gluing of algebraic spaces along Mayer--Vietoris squares}\label{S:algsp-gluing}
In this section, we prove the main theorems of the article. We begin with a slight strengthening of Theorem \ref{MT:timv_dm_push}.

\begin{proposition}\label{P:cocartesian-in-cat-of-quasi-DM}
Fix an algebraic stack $S$ and a tor-independent Mayer--Vietoris square as in \eqref{E:MV-square} over $S$ with $j$ quasi-compact. Let $W\to S$ be an algebraic stack. Then
\[
\Phi_{\Hom_S(-,W)}\colon \Hom_S(X,W)\to \Hom_S(X',W)\times_{\Hom_S(U',W)} \Hom_S(U,W)
\]
is fully faithful. If either
\begin{enumerate}
\item \label{PI:cocartesian-in-cat-of-quasi-DM:DM} $W\to S$ is Deligne--Mumford;
\item \label{PI:cocartesian-in-cat-of-quasi-DM:qfd} $\Delta_{W/S}$ is quasi-finite and $\Delta_{\Delta_{W/S}}$ is a quasi-compact
  immersion; or
\item \label{PI:cocartesian-in-cat-of-quasi-DM:lqfsd} $\Delta_{W/S}$ is locally quasi-finite and separated;
\end{enumerate}
then $\Phi_{\Hom_S(-,W)}$ is an equivalence of groupoids.
In particular, the square is cocartesian in the category
of Deligne--Mumford stacks.
\end{proposition}
\begin{proof}
The question is fppf-local on $X$, so we may assume that $X$ is affine. We
may also replace $S$ and $W$ with $X$ and $W\times_S X\to X$ and assume that
$X=S$. Further, we may replace $X'$ with a quasi-compact open neighborhood
of $Z$.
Then, we may also assume that $W$ is quasi-compact.

If $W\to X$ is arbitrary (resp.\ representable, resp.\ a
monomorphism), then $\Delta_{W/X}$ is representable (resp.\ a monomorphism,
resp.\ an isomorphism). Fully faithfulness of $\Phi_{\Hom_X(-,W)}$ follows if
$\Phi_{\Hom_X(-,W\times_{W\times_X W} X)}$ is an equivalence for
every morphism $X\to W\times_X W$. By induction on the diagonal,
we may thus assume that $\Phi_{\Hom_X(-,W)}$ is fully faithful and
it is enough to prove that $\Phi_{\Hom_X(-,W)}$ is essentially surjective
when \itemref{PI:cocartesian-in-cat-of-quasi-DM:DM},
\itemref{PI:cocartesian-in-cat-of-quasi-DM:qfd} or
\itemref{PI:cocartesian-in-cat-of-quasi-DM:lqfsd} holds.

If $W$ is Deligne--Mumford, then there exists an \'etale presentation
$W'\to W$. If $W$ is as in
\itemref{PI:cocartesian-in-cat-of-quasi-DM:qfd} or
\itemref{PI:cocartesian-in-cat-of-quasi-DM:lqfsd}, then
by~\cite[Thm.~7.2]{MR2774654} or~\cite[Prop.~6.11]{MR3084720}
there exist an \'etale representable morphism $W'\to W$ and a finite
faithfully flat morphism $V\to W'$ such that $V$ is affine.

Given maps $U\to W$ and $X'\to W$ that agree on $U'$, we obtain, by pulling
back $W'\to W$, an element of $\Et(X')\times_{\Et(U')} \Et(U)$, hence a unique element of $(E\to
X)\in \Et(X)$ by Corollary~\ref{C:wMV-is-MV-Et} and
Theorem~\ref{T:gluing-of-etale}. Pulling-back the square along
$E\to X$, we may replace $X$ by $E$ and assume that $W=W'$ in all three cases.

In the latter two cases, we additionally pull-back $V\to W'=W$ to finite flat morphisms
over $X'$, $U'$ and $U$. These glue to a unique finite faithfully flat morphism
$F\to X$ (Corollary \ref{C:gluing-of-algebras} and Theorem \ref{T:MV-QCoh-gluing}\itemref{TI:MV-QCoh-gluing:flat}). We may thus replace $X$ with $F$ and assume that $V=W=W'$ are affine schemes.

Let
$A_W=\Gamma(W,\Orb_W)$ for any algebraic stack $W$. The
map $\Phi_{\Hom(-,W)}$ then becomes
\begin{align*}
\Hom(A_W,A_X) &\to
  \Hom(A_W,A_{X'})\times_{\Hom(A_W,A_{U'})} \Hom(A_W,A_U) \\
&= \Hom(A_W,A_{X'}\times_{A_{U'}}A_U).
\end{align*}
This is an isomorphism since $A_X\to A_{X'}\times_{A_{U'}}A_{U}$ is an
isomorphism by Corollary~\ref{C:MV-QCoh-gluing:ff} applied to the structure sheaf
$\Orb_X$.
\end{proof}
We can now prove Theorem~\ref{MT:timv_dm_push}.
\begin{proof}[Proof of Theorem~\ref{MT:timv_dm_push}]
This is the last statement of Proposition~\ref{P:cocartesian-in-cat-of-quasi-DM}.
\end{proof}
We can now also generalize Corollary \ref{C:MV-QCoh-gluing:ff}
from quasi-coherent sheaves to algebraic spaces.
\begin{corollary}\label{C:gluing-of-maps-of-alg-spaces}
Fix a tor-independent Mayer--Vietoris square as in \eqref{E:MV-square} with $j$ quasi-compact. Let $Y\to X$ and $Z\to X$ be relatively
Deligne--Mumford (e.g., representable). If $Y\to X$ is $f$-flat,
then
\[
\Hom_X(Y,Z)\to \Hom_X(Y\times_X X',Z)\times_{\Hom_X(Y\times_X U',Z)} \Hom_X(Y\times_X U,Z)
\]
is bijective.
\end{corollary}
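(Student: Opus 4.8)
The plan is to base change the square along $Y\to X$ and reduce to Proposition~\ref{P:cocartesian-in-cat-of-quasi-DM}. Since $Y\to X$ is $f$-flat, Lemma~\ref{L:mv_bc}\itemref{LI:mv_bc:mv} shows that the base-changed square
\[
\vcenter{\xymatrix{
    Y\times_X U'\ar[r]\ar[d] & Y\times_X X'\ar[d] \\
    Y\times_X U\ar[r] & Y\ar@{}[ul]|\square
}}
\]
is again a tor-independent Mayer--Vietoris square, and the bottom map stays quasi-compact (and an open immersion) since these properties are stable under base change. Regarding this square as a square over $S:=Y$, the hypotheses of Proposition~\ref{P:cocartesian-in-cat-of-quasi-DM} are in force.

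Next I would record that $Z\to X$ is representable: the relative diagonal $\Delta_{Z/X}\colon Z\to Z\times_X Z$, composed with the projection $Z\times_X Z\to Z\times Z$, equals the diagonal of the algebraic space $Z$, which is a monomorphism; hence $\Delta_{Z/X}$ is a monomorphism, so $Z\to X$ is representable. Consequently $Z_Y:=Z\times_X Y\to Y$ is a morphism of algebraic spaces, in particular a Deligne--Mumford morphism, and therefore falls under case~(1) of Proposition~\ref{P:cocartesian-in-cat-of-quasi-DM}.

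Finally, since each of $Y$, $Y\times_X X'$, $Y\times_X U'$ and $Y\times_X U$ maps to $X$ through $Y$, the universal property of the fibre product $Z_Y=Z\times_X Y$ identifies $\Hom_X(T,Z)$ with $\Hom_Y(T,Z_Y)$ for each of these $T$, compatibly with the four restriction morphisms. Thus the map in the statement is identified with
\[
\Phi_{\Hom_Y(-,Z_Y)}\colon \Hom_Y(Y,Z_Y)\to \Hom_Y(Y\times_X X',Z_Y)\times_{\Hom_Y(Y\times_X U',Z_Y)}\Hom_Y(Y\times_X U,Z_Y),
\]
which is an equivalence of groupoids by case~(1) of Proposition~\ref{P:cocartesian-in-cat-of-quasi-DM}. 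As $Z_Y$ is an algebraic space, each groupoid $\Hom_Y(T,Z_Y)$ is discrete, so this equivalence is a bijection of sets, which is exactly the assertion. The main thing to get right is the representability of $Z\to X$ (so that the Deligne--Mumford case of Proposition~\ref{P:cocartesian-in-cat-of-quasi-DM} is applicable) together with the routine matching of the two fibre-product diagrams; beyond this bookkeeping I do not expect any real obstacle.
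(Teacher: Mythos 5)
Your proof is correct and follows essentially the same route as the paper: base change the square along the $f$-flat morphism $Y\to X$ using Lemma~\ref{L:mv_bc}\itemref{LI:mv_bc:mv} and then invoke Proposition~\ref{P:cocartesian-in-cat-of-quasi-DM} in its Deligne--Mumford case. The only (harmless) cosmetic difference is that you work over $S=Y$ with $W=Z\times_X Y$, whereas the paper applies the proposition directly with $S=X$ and $W=Z$, making your identification of $\Hom_X(T,Z)$ with $\Hom_Y(T,Z_Y)$ unnecessary.
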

\begin{proof}
Since $Y\to X$ is $f$-flat, the pull-back of the square along $Y\to X$ is
a tor-independent Mayer--Vietoris square (Lemma \ref{L:mv_bc}\itemref{LI:mv_bc:mv}).
The result thus follows from
Proposition~\ref{P:cocartesian-in-cat-of-quasi-DM}.
\end{proof}
We have now proved Theorem \ref{MT:glue_timv} in its entirety.
\begin{proof}[Proof of Theorem \ref{MT:glue_timv}]
  Claim \itemref{MTI:glue_timv:qcoh} is Theorem \ref{T:MV-QCoh-gluing} and claim 
  \itemref{MTI:glue_timv:algsp} is Corollary \ref{C:gluing-of-maps-of-alg-spaces}.
\end{proof}

\begin{remark}
The map in Corollary~\ref{C:gluing-of-maps-of-alg-spaces} need not be
injective if the square is a weak Mayer--Vietoris square. Indeed,
Example~\ref{E:weak-but-not-MV} is an example of a weak Mayer--Vietoris square
such that $\Gamma(X)\to \Gamma(X')\times_{\Gamma(U')}\Gamma(U)$ is not
injective. If $f,g\in \Gamma(X)$ are two element that have equal images, then
the corresponding maps $f,g\colon X\to \Aff^1$ become equal after restricting
to $X'$ and $U$.
\end{remark}
We can also now prove Theorem \ref{MT:glue_fmv}.
\begin{proof}[Proof of Theorem \ref{MT:glue_fmv}]
That $\Phi_{\QCOH}$ is an equivalence is Corollary \ref{C:MV-QCoh-gluing:flatness}. That $\Phi_{\AFF}$ and $\Phi_{\QAFF}$ are equivalences is Corollary \ref{C:fmv-qaff_gluing}. That $\Phi_{\AlgSp}$ is fully faithful is a special case of
Corollary~\ref{C:gluing-of-maps-of-alg-spaces}. That $\Phi_{\Hom(-,W)}$ is fully faithful for every algebraic stack $W$ is Proposition \ref{P:cocartesian-in-cat-of-quasi-DM}. That $\Phi_{\Hom(-,W)}$ is an equivalence when $W$ has quasi-affine diagonal follows from Corollary \ref{C:fmv-qaff_gluing} and an identical argument to \cite[Cor.~6.5.1(a)]{MR1432058}.

It remains to prove \itemref{MTI:glue_fmv:algsp-exc}: $\Phi_{\AlgSp_{\lfp}}$
is an equivalence when $X$ is locally the spectrum of a $G$-ring.
For quasi-separated algebraic
spaces the essential surjectivity of $\Phi_{\AlgSp_{\lfp,\qs}}$
follows as in~\cite[Thm.~5.2 (ii), Cor.~5.6 (iii), Thm.~5.7]{MR1432058} but
since we are working in a slightly more general setting let us write out the
details. For brevity, we let $\Phi=\Phi_{\AlgSp_{\lfp}}$.

Since algebraic spaces satisfy descent for the fppf topology, we may use Proposition \ref{P:qaff_dom} and the \'etale gluing result \cite[Thm.~A]{MR2774654}, and so assume that $X$ is affine, the spectrum of a $G$-ring,
and $X'$ is quasi-affine.

If $P$ is a property of morphisms of algebraic spaces, then we say that a
morphism of triples is $P$ if the three components are $P$. Since $X'\amalg
U\to X$ is faithfully flat and quasi-compact, a morphism $f\colon W_1\to W_2$ in
$\AlgSp_{\lfp}(X)$ is quasi-compact (resp.\ quasi-separated, resp.\ \'etale,
resp.\ open, resp.\ a monomorphism) if and only if $\Phi(f)$ has the same
property~\cite[IV.2.7.1, IV.17.7.3 (ii)]{EGA}.

We now prove essential surjectivity of $\Phi$. Thus, consider a triple $W'\to X'$, $W_U\to
 U$, $W_{U'}\to U'$ of algebraic spaces, locally of finite presentation.

We will begin by showing that it
is enough to prove essential surjectivity of $\Phi$ for the subcategories of
\emph{quasi-compact} algebraic spaces
(cf.~\cite[Thm.~5.7]{MR1432058}). Write $W'$ and $W_U$ as filtered
unions of
quasi-compact open subspaces $W'_\lambda$ and $W_{U,\mu}$ respectively. Since $j'\colon U' \to X'$ is quasi-compact, for
every $\lambda$, the open subspace $W'_\lambda\cap W_{U'}$ is
quasi-compact. Hence, for sufficiently large $\mu=\mu(\lambda)$, the inverse
image $W_{U',\mu}:=f_U^{-1}(W_{U,\mu})$ contains $W'_\lambda\cap W_{U'}$. We
may thus form the triple $(W'_\lambda\cup W_{U',\mu},W_{U,\mu},W_{U',\mu})$ of
quasi-compact algebraic spaces. By assumption, this triple is in the
essential image of $\Phi$ and descends to an algebraic space $W_{\lambda,\mu}$.
We then let $W=\bigcup_{\lambda,\mu} W_{\lambda,\mu}$ where the union runs over
all $\lambda$ and $\mu\geq \mu(\lambda)$.

We next assume that the triple is quasi-compact and
quasi-separated. In this case, we claim that we are free to replace $X$ with
any flat covering
$(X_i\to X)$ such that every $X_i\to X$ is a filtered limit of flat and finitely
presented morphisms $X_{i,\lambda}\to X$. Indeed, assume that the result holds
for the $X_i$, that is, there exists an algebraic space $W_i\to X_i$ of finite
presentation such that $\Phi(W_i)\cong (W',W_{U'},W_U)\times_X X_i$.  Then, by
standard limit arguments, there is for every $i$ and every sufficiently large
$\lambda=\lambda(i)$ an algebraic space $W_{i,\lambda}\to X_{i,\lambda}$ of
finite presentation such that $\Phi_{X_{i,\lambda}}(W_{i,\lambda})\cong
(W',W_{U'},W_U)\times_X X_{i,\lambda}$. Since $\Phi$ is fully faithful over
$X_{i,\lambda}\times_X X_{i,\lambda}$ and $X_{i,\lambda}\times_X
X_{i,\lambda}\times_X X_{i,\lambda}$ there is a canonical gluing datum for
$W_{i,\lambda}\to X_{i,\lambda}$ along $X_{i,\lambda}\to X$ which is flat and
of finite presentation.  So by fppf descent, $W_{i,\lambda}\to X_{i,\lambda}$
descends to an algebraic space over the open image of $X_{i,\lambda}\to
X$. Since we can find a finite number of such $X_{i,\lambda}$ that cover $X$,
the claim follows.

Since $X$ is the spectrum of a $G$-ring, the completion map $\widehat{X}_x\to X$ is a regular
morphism. Hence, by Popescu's theorem \cite{MR818160}, it is a limit of smooth morphisms. Since
$(\widehat{X}_x\to X)_{x\in X}$ is a flat cover, we may replace $X$ with
$\widehat{X}_x$ for some $x$ and assume that $X$ is the spectrum of a complete local ring. The completion of $X'$ at $z$ equals the completion of $X$ at $z$; hence,
$X'\to X$ has a section $s\colon X\to X'$. By Lemma~\ref{L:sections-give-MV},
this gives rise to a new tor-independent Mayer--Vietoris square. Corollary~\ref{C:gluing-of-maps-of-alg-spaces} for this square implies that
\[
\AlgSp_{s-\mathrm{fl}}(X')\to 
\AlgSp(X)\times_{\AlgSp(U)} \AlgSp(U')
\]
is fully faithful. Since $f_U$ is flat, the image of
$f_U^*\colon \AlgSp(U)\to \AlgSp(U')$ consists of $s|_{U'}$-flat objects.
Together with Corollary~\ref{C:gluing_special} this gives
\begin{align*}
\AlgSp(X) &\inj
  \AlgSp(X')\times_{\AlgSp(U')} \AlgSp(U) \\
  &= \AlgSp_{s-\mathrm{fl}}(X')\times_{\AlgSp(U')} \AlgSp(U) \\
  &\inj \bigl(\AlgSp(X)\times_{\AlgSp(U)} \AlgSp(U')\bigr)\times_{\AlgSp(U')} \AlgSp(U)
        =\AlgSp(X)
\end{align*}
so $\Phi(s^*W')\cong (W',W_{U'},W_U)$. Thus, $\Phi$ is essentially
surjective for finitely presented algebraic spaces. In fact, by the initial reduction to the 
quasi-compact case, we have proved that $\Phi$ is essentially surjective for triples of 
quasi-separated algebraic spaces.

Let us finally prove that $\Phi$ is also essentially surjective for algebraic
spaces that are not quasi-separated. It is enough to prove that it is
essentially surjective for quasi-compact algebraic spaces. By the previous
argument, it is enough to prove that if $\bar{X}=\varprojlim_\lambda
X_{\lambda}$ is a limit of affine schemes, and
$(\bar{W}',\bar{W}_{U'},\bar{W}_U):=(W',W_{U'},W_U)\times_X \bar{X}$ is in the
essential image of $\Phi$, then so is $(W',W_{U'},W_U)\times_X X_\lambda$ for
sufficiently large $\lambda$.

Thus, let $\bar{W}\to \bar{X}$ be an algebraic space such that
$\Phi(\bar{W})=(\bar{W}',\bar{W}_{U'},\bar{W}_U)$ and pick an affine
presentation $\bar{V}\to \bar{W}$. Note that $\bar{V}\to \bar{W}\to \bar{X}$ is
finitely presented. This induces morphisms of triples
\[
\Phi(\bar{V})=
(\bar{V}',\bar{V}_{U'},\bar{V}_U)\to
(\bar{W}',\bar{W}_{U'},\bar{W}_U)\to
(\bar{X}',\bar{U'},\bar{U})
\]
where the first map is surjective and \'etale and the composition is of finite
presentation. For sufficiently large $\lambda$, we may thus descend this to a
morphism of triples
\[
(V'_\lambda,V_{U',\lambda},V_{U,\lambda})\to
(W'_\lambda,W_{U',\lambda},W_{U,\lambda})\to
(X'_\lambda,U'_\lambda,U_\lambda)
\]
over $X_\lambda$ where the first map is \'etale and the composition is of
finite presentation. Thus, there exists an algebraic space $V_\lambda\to
X_\lambda$, unique up to unique isomorphism, such that $\Phi(V_\lambda)\cong
(V'_\lambda,V_{U',\lambda},V_{U,\lambda})$.

Let $R'_\lambda=V'_\lambda\times_{W'_\lambda} V'_\lambda$ and similarly over
$U'_\lambda$ and $U_\lambda$. Then the triple
$(R'_\lambda,R_{U',\lambda},R_{U,\lambda})$ is locally of finite presentation
and quasi-separated over $(X'_\lambda,U'_\lambda,U_\lambda)$ and hence
isomorphic to $\Phi(R_\lambda)$ for an essentially unique $R_\lambda\to
X_\lambda$. By fully faithfulness, we obtain an \'etale equivalence relation
$\equalizer{R_\lambda}{V_\lambda}$ and we let $W_\lambda$ be its quotient
algebraic space. By fully faithfulness, $\Phi(W_\lambda)$ is isomorphic to
$(W'_\lambda,W_{U',\lambda},W_{U,\lambda})$ and the theorem follows.
\end{proof}
Finally, we prove Theorem \ref{MT:pushout_fmv}.
\begin{proof}[Proof of Theorem \ref{MT:pushout_fmv}]
  We must show that for every algebraic stack $W$, the functor
  \[
  \Phi_{\Hom(-,W)}\colon \Hom(X,W)\to \Hom(X',W)\times_{\Hom(U',W)} \Hom(U,W)
\]
is an equivalence of groupoids. Using Proposition \ref{P:qaff_dom} and the \'etale gluing 
result \cite[Thm.~A]{MR2774654}, we may assume that $X$ is affine and $X'$ is quasi-affine. In particular, we are free to assume that $W$ is quasi-compact. We have already seen that $\Phi_{\Hom(-,W)}$ is fully faithful in
Proposition~\ref{P:cocartesian-in-cat-of-quasi-DM}. To see that it is
essentially surjective, pick a smooth presentation $W_0\to W$ where $W_0$ is an
affine scheme. Pulling back, we obtain a triple in $(X'_0,U'_0,U_0)\in
\AlgSp_\lfp(X')\times_{\AlgSp_{\lfp}(U')} \AlgSp_{\lfp}(U)$; hence, a representable morphism $X_0\to X$ by
Theorem~\ref{MT:glue_fmv}. Since $X'\amalg U'\to X$ is
faithfully flat and quasi-compact, it follows that $X_0\to X$ is smooth.  Also,
\[
\vcenter{\xymatrix{
        U'_0\ar[r]\ar[d] & X'_0\ar[d] \\
        U_0\ar[r] & X_0,\ar@{}[ul]|\square
      }}
\]
is a flat Mayer--Vietoris square.  Since $W_0$ is
affine, we obtain a unique morphism $X_0\to W_0$ compatible with $X'_0\to W_0$
and $U_0\to W_0$ (Proposition~\ref{P:cocartesian-in-cat-of-quasi-DM}). By the full
faithfulness of $\Phi_{\Hom(-,W)}$, the induced morphisms $\equalizer{X_0\times_X X_0}{X_0} \to W_0 \to W$ coincide up to a unique $2$-isomorphism, so there is a unique morphism $X \to W$ and the result follows.
\end{proof}
\bibliography{references}
\bibliographystyle{bibstyle}
\end{document}